\newcommand{\va}{\boldsymbol{\mathsf{a}}}
\newcommand{\vb}{\boldsymbol{\mathsf{b}}}
\newcommand{\ve}{\boldsymbol{e}}
\newcommand{\vf}{\boldsymbol{f}}
\newcommand{\vr}{\boldsymbol{\mathsf{r}}}
\newcommand{\vs}{\boldsymbol{\mathsf{s}}}
\newcommand{\vq}{\boldsymbol{q}}
\newcommand{\vu}{\boldsymbol{\mathsf{u}}}
\newcommand{\vv}{\boldsymbol{\mathsf{v}}}
\newcommand{\vw}{\boldsymbol{\mathsf{w}}}
\newcommand{\vx}{\boldsymbol{\mathsf{x}}}
\newcommand{\vz}{\boldsymbol{\mathsf{z}}}
\newcommand{\vxi}{\boldsymbol{\xi}}
\newcommand{\rr}{\texttt{r}}
\newcommand{\qq}{\texttt{q}}
\newcommand{\vD}{\boldsymbol{\mathsf{D}}}
\newcommand{\vM}{\boldsymbol{\mathsf{M}}}
\newcommand{\fF}{\boldsymbol{\mathsf{S}}}
\newcommand{\vI}{\boldsymbol{\mathsf{I}}}
\newcommand{\vF}{\boldsymbol{\mathsf{F}}}
\newcommand{\vFQ}{\boldsymbol{\mathsf{FQ}}}
\newcommand{\vR}{\boldsymbol{\mathsf{R}}}
\newcommand{\vT}{\boldsymbol{\mathsf{T}}}
\newcommand{\vS}{\boldsymbol{\mathsf{S}}}
\newcommand{\vQ}{\boldsymbol{\mathsf{Q}}}
\newcommand{\vO}{\mathsf{O}}
\newcommand{\vK}{\boldsymbol{K}}
\newcommand{\NN}{\mathbb{N}}
\newcommand{\ZZ}{\mathbb{Z}}
\newcommand{\RR}{\mathbb{R}}
\newcommand{\CC}{\mathbb{C}}
\newcommand{\diag}{\text{diag}}
\newcommand{\argmax}{\operatornamewithlimits{argmax}}
\newcommand{\argmin}{\operatornamewithlimits{argmin}}
\theoremstyle{definition}
\newtheorem{theorem}{Theorem}[section]
\newtheorem{defn}[theorem]{Definition}
\newtheorem{assumption}[theorem]{Assumption}
\newtheorem{lemma}[theorem]{Lemma}
\newtheorem{cor}{Corollary}[section]
\newtheorem{proposition}[theorem]{Proposition}
\theoremstyle{remark}
\newtheorem*{remark}{Remark}
\title{Alternating Projection, Ptychographic Imaging and Phase Synchronization}
\title[AP+Ptychography+PS]{Alternating Projection, Ptychographic Imaging and Phase Synchronization}
\author{Stefano Marchesini${}^\ddagger$}
\address{Advanced Light Source, Lawrence Berkeley National Laboratory, Berkeley, CA 94720, United States}
\thanks{${}^\ddagger$ Advanced Light Source, Lawrence Berkeley National Laboratory, Berkeley, CA 94720}
\email{smarchesini@lbl.gov}
\author{Yu-Chao Tu${}^\dagger$}
\address{Department of Mathematics, Princeton University, Princeton, NJ 08540}
\thanks{${}^\dagger$ Department of Mathematics, University of Utah, Salt Lake City, UT 84112, United States}
\email{tu@math.utah.edu}
\author{Hau-Tieng Wu${}^\Diamond$}
\address{Department of Mathematics, University of Toronto, Toronto, ON M5S2E4, Canada}
\thanks{${}^\Diamond$ Department of Mathematics, Stanford University, Stanford, CA 94305}
\email{hauwu@math.toronto.edu}
\begin{document}

\maketitle

\begin{abstract}
We demonstrate necessary and sufficient conditions of the {local} convergence of the alternating projection algorithm to a unique solution up to a global phase factor. Additionally, for the ptychography imaging problem, we discuss phase synchronization and graph connection Laplacian, and show how to construct an accurate initial guess to accelerate convergence speed to handle the big imaging data in the coming new light source era.\newline\newline
{\em Keywords: phase retrieval, ptychography, alternating projection, graph connection Laplacian, phase synchronization}
\end{abstract}

\section{Introduction}

The reconstruction of a scattering potential from measurements of scattered intensity in the far-field has occupied scientists and applied mathematicians for over a century, and arises in fields as varied as optics \cite{Fienup:1982,Luke_Burke_Lyon:2002}, astronomy \cite{Fienup_Marron_Schulz_Seldin:1993}, X-ray crystallography \cite{Eckert:2012Xray100years}, tomographic imaging \cite{Momose_Takeda_Itai_Hirano:1996}, holography \cite{Collier_Burckhardt_Lin:1971,Marchesini_Boutet_Sakdinawat:2008}, electron microscopy \cite{Hawkes_Spence:2007} and particle scattering generally. Although phase-less diffraction measurements using short wavelength (such as X-ray, neutron, or electron wave packets) have been at the foundation of some of the most dramatic breakthrough in science - such as the first direct confirmation of the existence of atoms \cite{Bragg_Bragg:1913,Bragg:1912}, the structure of DNA \cite{Watson_Crick:1953}, RNA \cite{Cramer_Bushnell_Kornberg:2001} and over $100,000$ proteins or drugs involved in human life \cite{Berman_Westbrook_Feng_Gilliland_Bhat_Weissig_Shindyalov_Bourne:2000,Lawson_Baker_Best_Bi_etc:2011} - the solution to the scattering problem for a general object was generally thought to be impossible for many years. Nevertheless, numerous experimental techniques that employ forms of interferometric/holographic \cite{Collier_Burckhardt_Lin:1971,Marchesini_Boutet_Sakdinawat:2008} measurements, gratings \cite{Pfeiffer_Weitkamp_Bunk_David:2006}, and other phase mechanisms like random phase masks, sparsity structure, etc \cite{Alexeev_Bandeira_Fickus_Mixon:2013,Bandeira_Cahili_Mixon_Nelson:2013,Candes_Strohmer_Voroninski:2013,Candes_Eldar_Strohmer_Voroninski:2013,Waldspurger_dAspremont_Mallat:2013,Fannjiang_Liao:2012,Wang_Xu:2013,Balan_Wang:2013} to help overcome the problem of phase-less measurements have been proposed over the years \cite{Nugent:2010, Falcone_Jacobsen_Kirz_Marchesini_Shapiro_Spence:2011, Guo:2010}.
   
More recently an experimental technique has emerged that enables to image what no-one was able to see before: macroscopic specimens in 3D at wavelength (i.e. potentially atomic) resolution, with chemical state specificity. Ptychography was proposed in 1969 \cite{Hoppe:1969,Hegerl_Hoppe:1970,Nellist_McCallum_Rodenburg:1995,Chapman:1996,Rodenburg:2008} to improve the resolution in electron or x-ray microscopy by combining microscopy with scattering measurements. This technique enables one to build up very large images at wavelength resolution by combining the large field of view of a high precision scanning microscope system with the resolution enabled by diffraction measurements. In other words, the diffractive imaging and the scanning microscope techniques are combined together.

Initially, technological problems made ptychography impractical. Now, thanks to advances in source brightness \cite{Chapman:2009,Borland:2013} and detector speed \cite{pilatus,fastccd}, research institutions around the world are rushing to develop hundreds of ptychographic microscopes to help scientists understand ever more complex nano-materials, self-assembled devices, or to study different length-scales involved in life, from macro-molecular machines to bones \cite{Dierolf_Menzel_Thibault_Schneider_Kewish_Wepf_Bunk_Pfeiffer:2010}, and whenever observing the whole picture is as important as recovering local atomic arrangement of the components. 
 
Experimentally, ptychography works by retrofitting a scanning microscope with a parallel detector.  In a scanning microscope, a small beam is focused onto the sample via a lens, and the transmission is measured in a single-element detector. The image is built up by plotting the transmission as a function of the sample position as it is rastered across the beam. In such microscope, the resolution of the image is given by the beam size. In ptychography, one replaces the single element detector with a two-dimensional array detector such as a CCD and measures the intensity distribution at many scattering angles, much like a radar detector system for the microscopic world. Each recorded diffraction pattern contains short spatial Fourier frequency information \cite{Goodman:1996} about features that are smaller than the beam-size, enabling higher resolution. At short wavelengths however it is only possible to measure the intensity of the diffracted light. To reconstruct an image of the object, one needs to retrieve the phase. The phase retrieval problem is made tractable in ptychography by recording multiple diffraction patterns from the same region of the object, compensating phase-less information with a redundant set of measurements. 
 
While reconstruction methods often work well in practice, fundamental mathematical questions concerning their convergence remain unresolved. The reader of an experimental paper is often left to wonder if the image and the resulting claims are valid, or one possibility among many solutions. Retractions of experimental results do happen (see \cite{Thibault:2007thesis} for a discussion of controversial results in the optical community), and the problem is exacerbated because reproducing an image a nanoscale object is often not practical. What are often referred to as convergence results for projection algorithms are far from what we need for global convergence \cite{Luke_Burke_Lyon:2002}.
 
A popular algorithm for solving the phase retrieval problem was proposed in 1972. In their famous paper, Gerchberg and Saxton \cite{Gerchberg_Saxton:1971}, independently of previous mathematical results for projections onto convex sets, proposed a simple algorithm for solving phase retrieval problems in two dimensions. In \cite{Levi_Stark:1984} the algorithm was recognized as a projection algorithm that involves alternating projections between measurement space and object space. In 1982 Fienup \cite{Fienup:1982} generalized the Gerchberg-Saxton algorithm and analyzed many of its properties, showing, in particular, that the directions of the projections in the generalized Gerchberg-Saxton algorithm are formally similar to directions of steepest descent for a distance metric. One particular algorithm we focus on this paper is the alternating projection (AP) algorithm,
which iteratively alternates between enforcing two pieces of information about the phase retrieval problem: the solution has known measured amplitude, and the illumination geometry is known. The main purpose of the AP algorithm is finding the solution that satisfies both conditions simultaneously.

Projection algorithms for convex sets have been well understood since 1960s. The phase retrieval problem, however, involves nonconvex sets. For this reason, the convergence properties of the Gerchberg-Saxton algorithm and its variants is still an open question except in very special cases \cite{Luke_Burke_Lyon:2002,Lewis_Luke_Malick:2008}. 
 
The phase retrieval problem can be stated as following.  Given a $N\times M$ matrix $\fF$, is it possible to recover the unknown vector $\psi\in \mathbb{C}^M$ from $\va\in\RR^N$, where (see Section \ref{section:notation} for detail conditions):
\[
\va=|\fF\psi|.
\]

There are two main results reported in this paper.
We survey the relation between the AP algorithm and the uniqueness result shown in \cite{Balan_Casazza_Edidin:2006},  and based on \cite{Bauschke_Combettes_Luke:2002} show that {locally} the stagnation set of the AP algorithm coincides with the unique solution up to a global phase factor in Theorem \ref{theorem:ThetaAPaSet}. With the help of the above results, in Theorem \ref{Lemma:APConvergence_Pa} we demonstrate the necessary and sufficient conditions of the {local} convergence of the AP algorithm to the unique solution up to a global phase factor. We show that the AP algorithm can fail to converge, in which case the step size can become arbitrarily small even though the limit is not a stagnation point. This issue has led to some confusion throughout the literature. 

Second, we survey the intimate relationship between the ptychography imaging problem and the notion of {\it phase synchronization}.
We form the {\it connection graph} and study the {\it synchronization function} of the ptychography imaging problem, which motivates the application of the recently developed technique {\it graph connection Laplacian} (GCL). 
In particular, in the ptychography imaging problem, phase synchronization based on GCL is applied to quickly construct an accurate initial guess for the AP algorithm to accelerate convergence speed for large scale diffraction data problems. With the help of the above results, in Section \ref{section:NumericalResults} we show some numerical results using different new algorithms. We also propose a new lens design and synchronization strategies that achieve over $80\times$ convergence rate  and exhibit linear convergence. Numerical tests with noise exhibit linear relationship between the norm of the noise and and the final reconstruction error. While these numerical results are encouraging, they raise several questions and have practical implications, which we discuss in the conclusion.

 The paper is organized as following. In Section \ref{section:notation} we introduce the ptychography experimental setup and notation. In Section \ref{section:AP:convergence} we show the necessary and sufficient conditions of the local convergence of AP. In addition, we discuss the relationship between the AP algorithm and optimization and show that the second derivative of the associated objective function is positive close to the solution. In Section \ref{sec:AP:PS} we discuss the relationship between the AP algorithm and the notion of phase synchronization, and propose methods based on GCL to obtain an accurate initial guess. In Section \ref{section:NumericalResults} we show numerical results of proposed methods and propose a new lens design and synchronization strategies that achieve over $40\times$ faster convergence than the AP algorithm and $10\times$ faster than the {\em relaxed averaged alternating reflection} (RAAR) algorithm.

 
\section{Background and notations}\label{section:notation}
\subsection{Notation}\label{Section:NotationDefinition}
We start from summarizing notations we use in this paper. Denote $\RR_+=\{x\geq0,\,x\in\RR\}$. 
Denote the $i$-th entry of $\vu\in\CC^L$ as $\vu_i:=\vu(i)$. 
Define $\|\vu\|$ to be the Euclidean norm of $\vu$. Let $\ve_l\in \CC^L$ to be the unit vector with $1$ in the $l$-th entry and $\boldsymbol{1}$ to be the vector with $1$ in all entries. 

Given a function $f:\CC\to\CC$, $f(\vu)$ is defined as the vector so that its $i$-th entry is $f(\vu(i))$. For example, 
the vector $|\vu|$ is the entry-wise modulation of $\vu$; that is, $|\vu|\in\RR_+^L$ and the $j$-th entry of $|\vu|$ is $|\vu(j)|$. Also, we have an indicator vector for $\vu\in\CC^L$, denoted as $\chi_{\vu}\in \RR^L$, that is, $\chi_{\vu}(i)=1$ when $\vu(i)\neq 0$ and $\chi_{\vu}(j)=0$ when $\vu(i)=0$. Given a function $g:\CC\times \CC\to\CC$, $g(\vu,\vv)$ is defined as the vector so that its $i$-th entry is $g(\vu(i),\vv(i))$, where $\vu,\vv\in\CC^L$. For example, the division $\frac{\vu}{\vv}$ and production $\vu\vv$ are intended as element-wise operations. 

We denote $\diag(\vu)$ to be a diagonal matrix so that its $i$-th diagonal entry is $\vu(i)$. With this notation, we know that $\vu\vv=\diag(\vu)\vv$ when $\vu,\vv\in\CC^L$.
Also we denote $A_{ij}$ to be the $(i,j)$-th entry of $A\in \CC^{L\times L'}$. To express the notation in a compact format, we stack the columns of a complex matrix $A\in \CC^{L\times L'}$ into a {\it vector form} $A^\vee\in\CC^{LL'}$ so that the $((l-1)L+1)$-th to the $(lL)$-th entries in $A^\vee$ is the $l$-th column of $A$, where $l=1,\ldots L'$. 

Denote $\mathbb{T}_1:=\{e^{it},\,t\in[0,2\pi)\}$ to be the unit torus embedded in $\CC$. Given $\va\in\RR_+^m$, the notation $\mathbb{T}_{\va}$ means the real torus embedded in $\CC^L$, that is, $\mathbb{T}_{\va}:=\{\vu\in\CC^L:\,\vu(j)=\va(j)e^{it_j},\,t_j\in[0,2\pi),\,\mbox{for all }j=1,\ldots,L\}$. Denote $B_a(\vz_0):=\{\vz\in\CC^L;\,\|\vz-\vz_0\|\leq a\}\subset \CC^L$ to be the ball centered at $\vz_0\in\CC^L$ with the radius $a>0$. Define the two-dimensional grid with size $L\in\NN$ and length scale $r>0$ as $D_r^{L}:=\{(r\alpha,r\beta)\}_{\alpha,\beta=0}^{L-1}\subset \RR^2$. 

\begin{figure}[t] 
  \begin{center}
  		\includegraphics[width=1\textwidth]{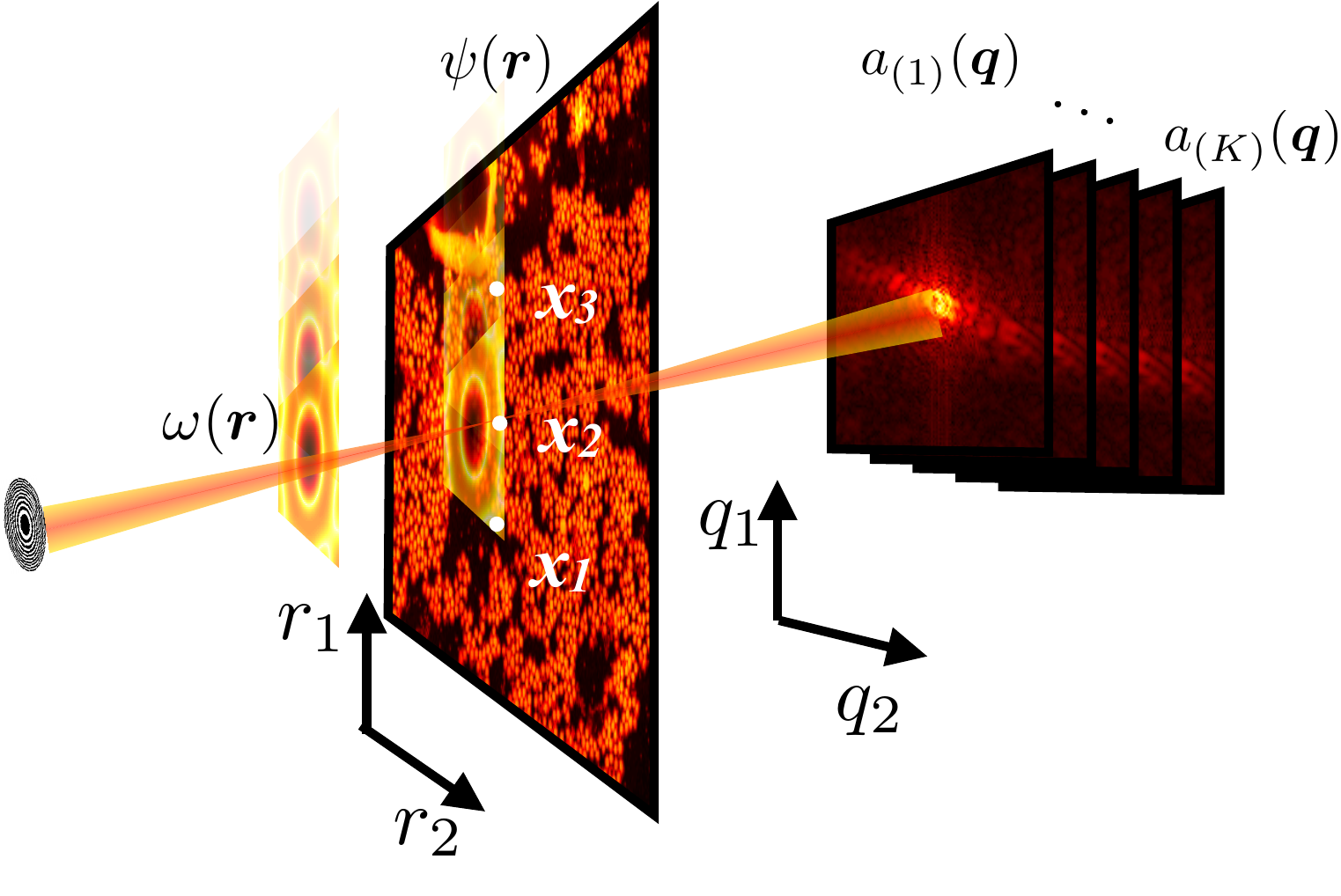}
 \end{center}
 \caption{Experimental geometry in ptychography: an unknown sample with transmission $\psi(\vr)$ is rastered through an illuminating beam $\omega(\vr)$, and a sequence of diffraction measurements $\va_{(i)}^2$ are recorded on an area detector as the sample is rastered around. The point-wise product between illuminating function and sample, $\vz_{(i)}(\vr):=\omega(\vr)\psi(\vr+\vx_{i})$, is related to the measurement by a Fourier magnitude relationship $\va_{(i)} = \left |F \vz_{(i)} \right |$.
\label{fig:1}}
\end{figure}

\subsection{The mathematical framework of the ptychography experiment}\label{section:MathematicalFramework}
In a ptychography experiment, an object of interest is illuminated by a coherent beam, and the resulting diffraction pattern intensity is discretized by a pixellated camera. Numerically, the illuminated portion of the object is discretized to enable fast numerical methods. Such approximation is a valid representation of the physical experiment when the illumination function is smaller than the maximum bandwidth allowed by detector. 
We refer to \cite{Marchesini_Schirotzek_Yang_Wu_Maia:2013} to situations when these conditions are not strictly satisfied.

For the purpose of this paper, an object of interest is discretized as a $n\times n$ matrix and denoted as $\psi:D_r^{n}\to \CC$, where $n\in\NN$ and $r>0$ is the diffraction limited length scale \cite{Chapman_Barty_Marchesini_Noy_Hau-Riege_Cui_Howells_Rosen_He_Spence:2006}. 
For simplicity, in this paper we only consider the square matrix case and a uniform discretization in both axes. A more general setup is possible with heavier notations.
Take a two dimensional small beam with known distribution, and discretize it as a $m\times m$ matrix denoted as $\omega$, where $m<n$. $\omega$ is the kernel function associated with the lens we use in the experiment. We can view the matrix $\omega$ as a complex valued function defined on $D_r^{m}$ so that its value on $(r(\alpha-1),r(\beta-1))$ is $\omega(\alpha,\beta)$, where $\alpha,\beta=1,\ldots,m$. Define the {\it support of $\omega$} as
\[
\text{supp}(\omega):=\{(r(\alpha-1),r(\beta-1))\in D_r^{m}:\, \omega(\alpha,\beta)\neq 0\},
\]
and similarly the support of $\psi$ is denoted as $\text{supp}(\psi)$. 

In the experiment, we move the lens around the sample, illuminate $K>1$ subregions and obtain $K$ diffraction images. Please see Figure \ref{fig:1} for reference. For $\vx\in D^n_{r}$, denote $\iota_{\vx}$ to be the embedding of $D_r^{m}$ onto $D_r^{n}$ so that the left upper corner of $D_r^{m}$ is located in $\vx\in D_r^{n}$; that is, $\iota_{\vx}(\vr)=\vx+\vr$, where $\vr\in D_r^{m}$. Also denote $\mathcal{F}$ to be the 2D DFT operator, that is, $(\mathcal{F}f)(\vq)=\sum_{\vr}e^{i\vq\cdot \vr}f(\vr)$ when $f\in\CC^{m\times m}$. 
Define the {\it raster points} as $\vx_{i}\in D_r^{n}$, where $i=1,\ldots,K$, which are associated with diffraction images. With these raster points, the experimenter collects a sequence of $K$ diffraction images $\va_{(i)}$ of size $m\times m$, $i=1,\ldots,K$, associated with $\psi$ restricted to $\iota_{\vx_{i}}(D_r^{m})$ by  
\begin{align}
&\va_{(i)}(\vq)=\big| \mathcal{F} (\omega\circ\psi_{(i)})(\vq)\big| ,\nonumber\\
&\vr=r(\mu,\nu),\quad\vq=\frac{2\pi}{r}(\mu,\nu),\quad\mu,\nu\in\{0,\ldots,m-1\}.\nonumber
\end{align} 
where $\psi_{(i)}: D_r^{m} \to\CC$ is the object over the subregion $\iota_{\vx_{i}}(D_r^{m})\subset D_r^{n}$ satisfying $\psi_{(i)}(\vr):=\psi(\iota_{\vx_i}(\vr))$ for all $\vr\in D_r^{m}$. 
We call $\mathcal{X}_K:=\{\vx_{i}\}_{i=1}^K$ the {\it illumination scheme} for the ptychographic imaging. In this paper, $K$ is assumed to be fixed.

 With these notations, the relationship between the diffraction measurements collected in a ptychography experiment and $\psi$ can be represented compactly as
\begin{align}
&\va = |\vF\vz|,\quad \vz = \vQ\psi^\vee\label{ptychographic_eq},
\end{align}
or $\va = |\vFQ\psi^\vee|$, where
\begin{align*}
&\va:=\left[\begin{array}{c} \va_{(1)}\\ \vdots \\ \va_{(K)}\end{array} \right]\in \RR^{Km^2},\quad\vF:=\left[\begin{array}{ccc} F & \ldots & 0 \\ \vdots & \vdots & \vdots\\ 0 & \ldots & F\end{array} \right]\in \CC^{Km^2\times Km^2}\\
&\vz:=\left[\begin{array}{c} \vz_{(1)}\\ \vdots \\ \vz_{(K)}\end{array} \right]\in \CC^{Km^2},\quad\vQ:=\left[\begin{array}{c} \vQ_{(1)}\\ \vdots \\ \vQ_{(K)}\end{array} \right]\in \CC^{K m^2\times n^2},
\end{align*}
where $F$ is the associated 2D DFT matrix when we write everything in the stacked form, that is, $F$ is a $m^2\times m^2$ matrix satisfying $F_{l,k}=e^{i\qq_m^{-1}(l-1)\cdot \rr_m^{-1}(k-1)}$, where $\rr_m:\,D^m_r\to \ZZ_{m^2}$ and $\qq_m:D^m_{\frac{2\pi}{r}}\to \ZZ_{m^2}$ are one-to-one maps defined as
\begin{align}
\rr_m:(r\alpha,r\beta)\mapsto \alpha m+\beta+1,\quad\qq_m:\left(\frac{2\pi}{r}\alpha,\frac{2\pi}{r}\beta\right)\mapsto \alpha m+\beta+1,\label{definition:randq}
\end{align}
where $\alpha,\beta=0,\ldots,m-1$.
 The objective of the ptychographic reconstruction problem is to find $\psi$ given $\va$ and the form (\ref{ptychographic_eq}).

\section{The alternating projection algorithm and its convergence result}\label{section:AP:convergence}
In this section, we describe the general phase retrieval problem and study the convergence of the alternating projection (AP) algorithm. 
\subsection{The phase retrieval problem}
In general, given an object $\psi_0\in \CC^N$ and a frame $\{\vf_i\}_{i=1}^M\subset \CC^N$ so that $M\geq N$. Denote $\fF$ to be a $M\times N$ matrix with the $i$-th row being $\vf^*_i$. The {\it phase retrieval problem} we might ask in this setup is the following. Given
\[
\va=|\fF\psi_0|,
\] 
is it possible to recover $\psi_0$ from $\va$? {From now on, we assume that $\va(i)\neq 0$ for all $i=1,\ldots,M$. Indeed, if there is any zero entry, we could remove the $i$-th vector $\vf_i$ from the frame, as the phase information of the $i$-th component is not meaningful and we do not need to recover anything.}

\subsection{The alternating projection algorithm}
We start from recalling the commonly applied {\it AP algorithm} to solve the phase retrieval problem. 
Note that we have two pieces of information about the phase retrieval problem -- the solution has the amplitude $\va$ and is located on the range of $\fF$, which is denoted as $R_{\fF}$. That is, the solution $\fF \psi_0$ exists in $\mathbb{T}_{\va}\cap R_{\fF}$. We thus define the following two operators.
\begin{defn}[Phase correction operator]
The phase correction operator, $P_{\fF}:\CC^{M}\to\CC^{M}$, is defined as
$$
P_{\fF}:= \fF(\fF^*\fF)^{-1}\fF^*\in\CC^{M\times M};
$$
that is, $P_{\fF}$ projects a complex vector to $R_{\fF}$.
\end{defn}
Note that $(\fF^*\fF)^{-1}$ exists since $\{\vf_i\}_{i=1}^M$ is assumed to be a frame.
\begin{defn}[Amplitude correction operator]
The amplitude correction operator, $P_{\va}:\CC^{M}\to\CC^{M}$, is defined entry-wisely on $\vz\in\CC^M$ by
\begin{align*}
(P_{\va}\vz)(i)=  \va(i) \frac{\vz(i) }{|\vz(i) |} \chi_{\vz(i)}+\va(i)(1-\chi_{\vz(i)});
\end{align*}
that is, $P_{\va}$ substitutes the amplitude of $\vz(j)$ by $\va(j)$ and preserve the phase information\footnote{{Note that there are infinite different ways to define $P_{\va}$ when $\vz$ has at least zero entries. Indeed, when the $i$-th entry of $\vz$ is zero, we could define the $i$-th entry of $P_{\va}\vz$ to be $\va(i)e^{i\theta}$, where $\theta\neq 0$. Here we focus on our definition for the sake of its simple appearance. Thus, we could view an entry with $0$ value as having the amplitude $0$ and phase $0$ and clearly $P_{\va}$ is discontinuous at $\vz$ when there is at least one zero entry.}}.
\end{defn}
A popular approach to solve the phase retrieval problem is to find a vector $\widetilde{\vz}\in \CC^{M}$ such that
\begin{align}\label{ptychographic_optimization1}
\left\{
\begin{array}{l}
\| (I- P_{\fF} )\widetilde{\vz} \| = 0\\
\| (I-P_{\va} )\widetilde{\vz} \| = \||\widetilde{\vz}|-\va\|=0
\end{array}
\right.
\end{align}
are both satisfied. Once we find the solution, the object of interest $\psi_0$ is estimated by
\begin{align}
\widetilde{\boldsymbol{\psi}}_0 := (\fF^*\fF)^{-1}\fF^*\widetilde{\vz}\in \CC^{N}. \nonumber
\end{align}

In the AP algorithm, the problem (\ref{ptychographic_optimization1}) is tackled by the following iterative scheme
\begin{align}
\zeta^{(\ell+1/2)} :=P_{\va}\zeta^{(\ell)},\quad\zeta^{(\ell+1)}=P_{\fF}\zeta^{(\ell+1/2)},  \nonumber
\end{align}
where $\ell=0,1,2,\ldots$ and $\zeta^{(0)}$ is the initial value.
It is easy to verify that $P_{\va}$ is a projection onto $\mathbb{T}_{\va}$ in the sense that
\begin{eqnarray}
P_{\va} \zeta = \argmin_{\bar \zeta\in\mathbb{T}_{\va}} \|\bar \zeta - \zeta \|.\nonumber
\end{eqnarray}
See Lemma \ref{lemma:lemma1} for more information about $P_{\va}$.
Note that $P_{\va}$ is nonlinear in nature while $P_{\fF}$ linearly projects $\zeta^{(\ell+1/2)}$ to $R_{\fF}$. The algorithm can be illustrated in Figure \ref{lemma:lemma1:fig}. 
We mention that no matter what $\zeta^{(0)}$ is, $\{\zeta^{(\ell)}\}_{\ell=1}^\infty\subset R_{\fF}\cap B_{\|\va\|}(0)$ simply because the range of $P_{\va}$ is on $\mathbb{T}_{\va}$ which is of norm $\|\va\|$ and $P_{\fF}$ is a projection operator.

\begin{figure}[t]
  \begin{center}
     \includegraphics[width=0.75\textwidth]{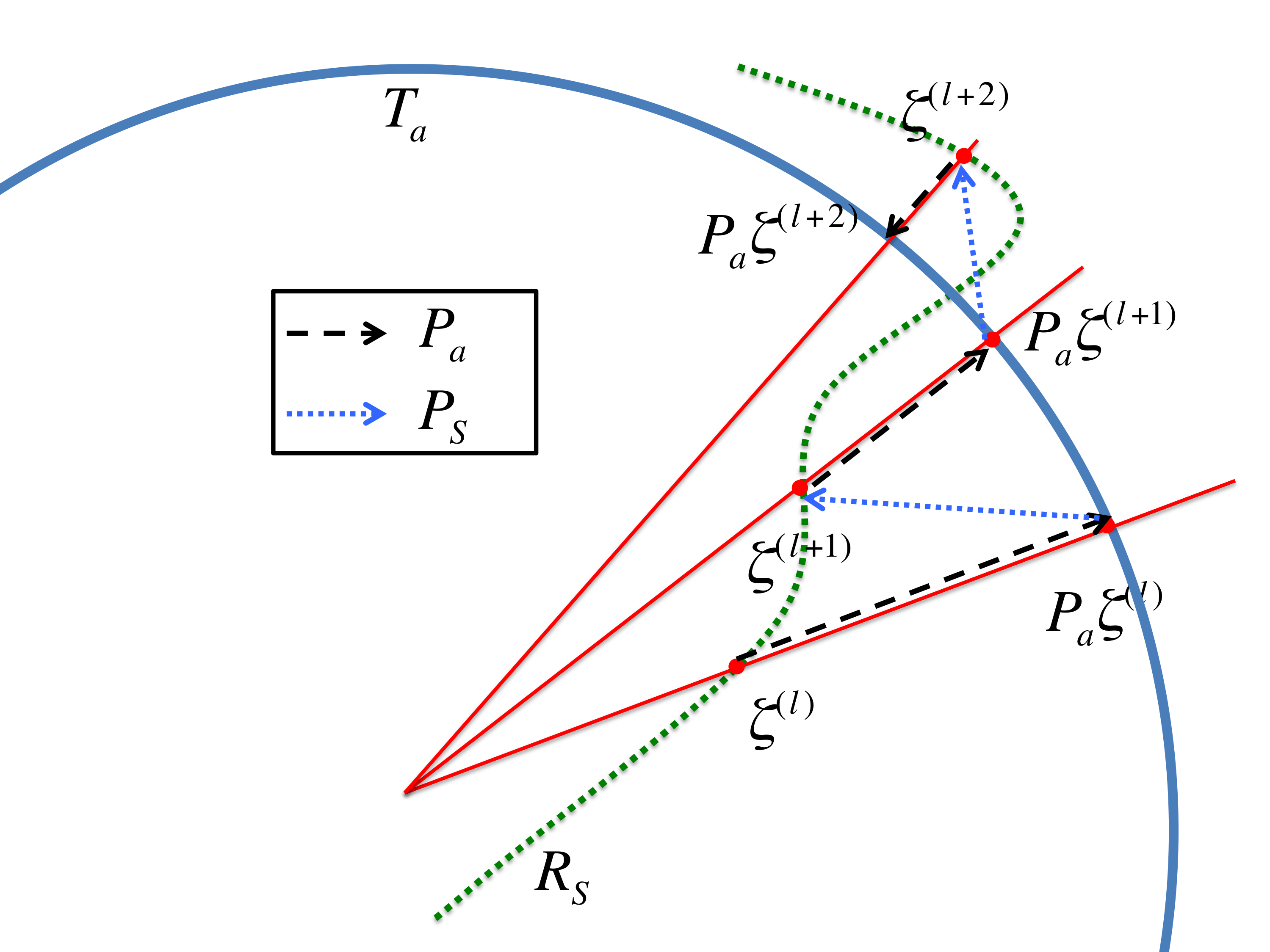}
      \end{center}
      \caption{Illustration of the alternating projection algorithm. The lengths of the black dashed arrows associated with $P_{\va}$ {non-increase} during the iteration and the lengths of the blue dashed arrows associated with $P_{\fF}$ {non-increase}, too. However, $\|\zeta^{(l)}-\zeta^{(l-1)}\|$ may not decrease. $R_{\fF}$ is illustrated as a curve to emphasize the nonlinear nature of the $P_{\va}$ map.}
 \label{lemma:lemma1:fig}
\end{figure}

\subsection{Fundamental results}
The main purpose of the AP algorithm is finding the solution $\fF\psi_0$, which is located on the set $R_{\fF}\cap \mathbb{T}_{\va}$. In order to characterize this set, in this subsection we introduce some notations and quote the theorems from \cite{Balan_Casazza_Edidin:2006}. 
Note that for the frame $\fF$, we have the following mapping:
\begin{align*}
\mathbb{M}^{\fF}:\CC^{N} \to{\CC}^{M},\qquad   \mathbb{M}^{\fF}(\vz)= \fF\vz,
\end{align*}
where $\vz\in\CC^{N}$. We thus can view the range of the $\mathbb{M}^{\fF}$ as a complex $N$-dimensional subspace of $\CC^{M}$. Thus, from the frame theory view point \cite{Balan_Casazza_Edidin:2006}, $\fF$ determines a point of the fiber bundle $\mathsf{F}[N,M;\CC]$, whose base manifold is the complex Grassmannian manifold $\mathsf{Gr}(N,M;\CC)$ with fiber $\mathsf{GL}(N,\CC)$. 
The phase retrieval problem is directly related to the following nonlinear map:
\begin{align}
\mathbb{M}_a^{\fF}:\CC^{N}/\mathbb{T}_1\to{\CC}^{M},\qquad   \mathbb{M}_a^{\fF}(\vz)=|\mathbb{M}^{\fF}(\vz)|=\sum_{k=1}^{M}|  \vf_k^*\vz |\ve_k,
\end{align}
where $\vz\in\CC^{N}$ and the subscript $a$ means taking the absolute value; that is, we only have the amplitude information of the coordinates of the signal $\vz$ related to the frame but the phase information is lost.

In the following, by {\it generic} we mean that there is a Zariski open set in the real algebraic variety $\mathsf{Gr}(N, M,\CC)$ so that the result holds for all frames of the associated linear subspace. In other words, if we take the uniform distribution on the Grasmannian manifold, then with probability one, the frame we choose will have the injectivity property. We refer the reader to \cite{Hartshorne:1977} about generic or Zariski topology and \cite{Berline_Getzler_Vergne:2004} about the notion of fiber bundle or Grasmannian manifold. Note that we only discuss the genericity of $\mathsf{Gr}(N,M;\CC)$ due to the following proposition.
\begin{proposition}[the complex version of Proposition 2.1 \cite{Balan_Casazza_Edidin:2006}]\label{thm:balan:2006:prop21}
For any two frames $\fF$ and $\widetilde{\fF}$ that have the same range of coefficients, $\mathbb{M}_a^{\fF}$ is injective if and only if $\mathbb{M}_a^{\widetilde{\fF}}$ is injective.
\end{proposition}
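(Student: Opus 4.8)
The plan is to reduce the statement to elementary linear algebra, exploiting the fact that ``having the same range of coefficients'' means precisely that the two analysis operators $\fF$ and $\widetilde{\fF}$, regarded as $M\times N$ matrices of full column rank, share the same column space $R_{\fF}=R_{\widetilde{\fF}}$, i.e. they determine the same point of $\mathsf{Gr}(N,M;\CC)$. First I would record the standard fact that two full-rank $M\times N$ matrices with identical column space differ by an invertible change of basis on the source: there is $T\in\mathsf{GL}(N,\CC)$ with $\widetilde{\fF}=\fF T$. Each column of $\widetilde{\fF}$ lies in $R_{\fF}$ and is therefore a unique linear combination of the columns of $\fF$; collecting these coefficients produces $T$, which is invertible because both matrices have rank $N$ and their common range is $N$-dimensional.

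Next, I would observe the key factorization. For every $\vz\in\CC^N$ one has $\mathbb{M}_a^{\widetilde{\fF}}(\vz) = |\widetilde{\fF}\vz| = |\fF(T\vz)| = \mathbb{M}_a^{\fF}(T\vz)$, so that $\mathbb{M}_a^{\widetilde{\fF}} = \mathbb{M}_a^{\fF}\circ T$ as maps. Since the amplitude map is by definition understood on the quotient $\CC^N/\mathbb{T}_1$, the one remaining point is to check that $T$ descends to a well-defined bijection of $\CC^N/\mathbb{T}_1$. This is where one has to be slightly careful, but it follows immediately from linearity: $T(e^{i\theta}\vz)=e^{i\theta}(T\vz)$, so $T$ carries each global-phase orbit bijectively onto another orbit, and $T^{-1}$ does the same.

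With the factorization in hand, the equivalence follows symmetrically. If $\mathbb{M}_a^{\fF}$ is injective on $\CC^N/\mathbb{T}_1$, then $\mathbb{M}_a^{\widetilde{\fF}}(\vz)=\mathbb{M}_a^{\widetilde{\fF}}(\vw)$ forces $\mathbb{M}_a^{\fF}(T\vz)=\mathbb{M}_a^{\fF}(T\vw)$, hence $T\vz=e^{i\theta}T\vw$ for some phase $e^{i\theta}$, hence $\vz=e^{i\theta}\vw$ by invertibility of $T$; thus $\mathbb{M}_a^{\widetilde{\fF}}$ is injective. The reverse implication is identical with $T$ replaced by $T^{-1}$, using $\fF=\widetilde{\fF}T^{-1}$.

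I expect the only genuine subtlety --- and the step I would present most carefully --- to be the passage to the quotient by the global phase $\mathbb{T}_1$: here injectivity means injectivity modulo a global phase, so one must verify that the linear reparametrization $T$ respects global-phase orbits, not merely that it is a linear isomorphism of $\CC^N$. Everything else is formal, and I would emphasize that no genericity or Zariski-openness enters the argument; this is exactly what licenses the subsequent reduction of all injectivity questions to the Grassmannian $\mathsf{Gr}(N,M;\CC)$ rather than to any particular frame representative.
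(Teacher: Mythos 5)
Your proof is correct: the factorization $\widetilde{\fF}=\fF T$ with $T\in\mathsf{GL}(N,\CC)$ extracted from the common column space, the identity $\mathbb{M}_a^{\widetilde{\fF}}=\mathbb{M}_a^{\fF}\circ T$, and the check that $T$ commutes with the $\mathbb{T}_1$-action and hence descends to a bijection of $\CC^{N}/\mathbb{T}_1$ together yield the equivalence, and you are right that no genericity enters. The paper itself gives no proof of this proposition --- it is imported (in complex form) from \cite{Balan_Casazza_Edidin:2006} --- and your change-of-basis argument is precisely the standard one behind that cited result, matching the paper's own remark that the frames with a fixed range of coefficients form the $\mathsf{GL}(N,\CC)$-fiber over the point of $\mathsf{Gr}(N,M;\CC)$ determined by $R_{\fF}$.
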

The main theorem in \cite{Balan_Casazza_Edidin:2006} we count on is the following.
\begin{theorem}[Theorem 3.3 \cite{Balan_Casazza_Edidin:2006}]\label{thm:balan:2006:thm33}
If $M\geq 4N-2$, then $\mathbb{M}_a^{\fF}$ is injective for a generic frame $\fF$. 
\end{theorem}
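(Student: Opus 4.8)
The plan is to reduce injectivity of $\mathbb{M}_a^{\fF}$ to a property of a single real-linear map on Hermitian matrices, and then to prove genericity by a dimension count on an incidence variety. First I would pass to squared moduli: since $t\mapsto t^2$ is injective on $\RR_+$, the map $\mathbb{M}_a^{\fF}$ is injective on $\CC^{N}/\mathbb{T}_1$ if and only if $\vz\mapsto(|\vf_k^*\vz|^2)_{k=1}^M$ is. Writing $|\vf_k^*\vz|^2=\vf_k^*(\vz\vz^*)\vf_k=\langle\Phi_k,\vz\vz^*\rangle$ with $\Phi_k:=\vf_k\vf_k^*$, this factors as $\vz\mapsto\vz\vz^*\mapsto\mathcal{A}(\vz\vz^*)$, where $\mathcal{A}:\mathrm{Herm}_N(\CC)\to\RR^M$, $X\mapsto(\langle\Phi_k,X\rangle)_k$, is real-linear. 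Because $\vu\vu^*=\vv\vv^*$ exactly when $\vu=e^{i\theta}\vv$, injectivity fails precisely when $\ker\mathcal{A}$ contains a nonzero matrix $\vu\vu^*-\vv\vv^*$; these are exactly the Hermitian matrices with at most one positive and at most one negative eigenvalue.

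Next I would organize the genericity argument. By Proposition~\ref{thm:balan:2006:prop21} injectivity depends only on the range $R_{\fF}$, i.e.\ on the point of $\mathsf{Gr}(N,M;\CC)$; equivalently the set of non-injective frames is invariant under the fiberwise $\mathsf{GL}(N,\CC)$-action, so it suffices to show it is a proper Zariski-closed subset of the frame space $\CC^{N\times M}\cong\RR^{2NM}$ and then descend to the Grassmannian. This bad set is the image, under the coordinate projection, of the real-algebraic incidence variety
\[
Z:=\{(\fF,[Q])\in\CC^{N\times M}\times\mathbb{P}\mathcal{B}:\ \vf_k^*Q\vf_k=0\ \text{for all }k\},
\]
where $\mathcal{B}:=\{\vu\vu^*-\vv\vv^*:\vu,\vv\in\CC^N\}\setminus\{0\}$ is the cone of bad matrices.

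Then I would carry out the dimension count. Parametrizing $\mathcal{B}$ by $(\vu,\vv)\in\CC^N\times\CC^N$ and removing the two independent global phases shows $\dim\mathcal{B}\le 4N-2$, so $\dim\mathbb{P}\mathcal{B}\le 4N-3$ (a finer analysis of this parametrization's fibers would sharpen the bound, but $4N-2$ is what is claimed). For each fixed $Q\neq0$ the real quadratic form $\vf\mapsto\vf^*Q\vf$ is not identically zero, so each condition $\vf_k^*Q\vf_k=0$ cuts out a codimension-one hypersurface in $\CC^N$; the variables $\vf_1,\dots,\vf_M$ being disjoint, the fiber of $Z\to\mathbb{P}\mathcal{B}$ over every $[Q]$ has dimension exactly $2NM-M$. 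Hence $\dim Z\le(4N-3)+(2NM-M)$, and the bad frames, being the projection of $Z$, have dimension at most $2NM-M+4N-3$. This is strictly less than $2NM$ precisely when $M\ge 4N-2$.

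The counting is thus clean; the step I expect to require the most care is turning the dimension bound into the asserted \emph{Zariski-open} genericity. A priori the projection of a real algebraic set is only semialgebraic, so I must argue that the bad frames lie in a proper Zariski-closed subset. The clean route is to note that $\mathbb{P}\mathcal{B}$ is compact — it is the projectivization of the cone of Hermitian matrices with at most one positive and at most one negative eigenvalue, which is closed in $\mathbb{RP}^{N^2-1}$ — so the projection $Z\to\CC^{N\times M}$ is proper, its image is closed, and by the count it has dimension $<2NM$, hence lies in a proper subvariety. One then transports this to a Zariski-open subset of $\mathsf{Gr}(N,M;\CC)$ using the $\mathsf{GL}(N,\CC)$-invariance of Proposition~\ref{thm:balan:2006:prop21}. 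This closedness-and-descent, not the counting, is the technical heart, and it is exactly what the frame-bundle framework of \cite{Balan_Casazza_Edidin:2006} provides.
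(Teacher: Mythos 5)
The paper does not actually prove this statement: Theorem \ref{thm:balan:2006:thm33} is quoted verbatim from \cite{Balan_Casazza_Edidin:2006}, so there is no internal proof to compare against; the relevant comparison is with the argument in that reference, and your sketch is essentially it. You correctly reduce injectivity of $\mathbb{M}_a^{\fF}$ to whether the real-linear map $X\mapsto(\vf_k^*X\vf_k)_{k=1}^M$ on Hermitian matrices annihilates a nonzero matrix of the form $\vu\vu^*-\vv\vv^*$, and the incidence-variety count $\dim Z\leq(4N-3)+(2NM-M)<2NM$ exactly when $M\geq 4N-2$ is the dimension-counting mechanism of the cited proof; your identification of the bad cone with the Hermitian matrices having at most one positive and at most one negative eigenvalue, which makes its projectivization closed in $\mathbb{RP}^{N^2-1}$ and hence compact, is also sound. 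Two small refinements: the fiber of $Z$ over $[Q]$ has dimension \emph{at most} $2NM-M$, not exactly that, since for $Q=\vu\vu^*$ (i.e.\ $\vv=0$) each condition $\vf_k^*Q\vf_k=0$ is $|\vu^*\vf_k|^2=0$ and has real codimension two --- this only helps the bound; and the closedness-and-descent step you flag as the technical heart can be shortcut, because by Tarski--Seidenberg the projection of $Z$ is semialgebraic of dimension $<2NM$, so its Zariski closure is automatically a proper real subvariety and the good frames contain a Zariski-open dense set, with the $\mathsf{GL}(N,\CC)$-invariance from Proposition \ref{thm:balan:2006:prop21} then descending this to $\mathsf{Gr}(N,M;\CC)$ as you say. (As the paper's closing remark notes, the sharper bound $M\geq 4N-4$ of \cite{Conca_Edidin_Hering_Vinzant:2013} requires genuinely more algebraic-geometric input than this count.)
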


From Theorem \ref{thm:balan:2006:thm33}, we know that generically the solution to the phase retrieval problem is unique when $M\geq 4N-2$, and thus solving the problem is possible.  When this condition is not true, we cannot guarantee the uniqueness and existing algorithms may not lead to the right result.
As useful as the Theorems, however, they do not answer the practical question -- how does the phase optimization algorithm lead to the solution? In particular, the operator $(\mathbb{M}_a^{\fF})^{-1}$ is unclear to us. 
In next subsections, we analyze the convergence behavior of the AP algorithm, which leads to $(\mathbb{M}_a^{\fF})^{-1}$. We mention that the uniqueness result of the phase retrieval problem in a different setup, in particular, when the signal of interest is real-valued with dimension higher than $2$ and the frame is the oversampling Fourier transform, it has been reported in \cite{Bruck_Sodin:1979,Bates:1982,Hayes:1982,Sanz:1985}. In such a setup, the set of non-unique solutions is of measure zero. However, such structures {\em do} exist in nature \cite{Pauling_Shappell:1930}.

\subsection{Some quantities and basic properties}
Notice that while the operator $\mathbb{M}_{a}^{\fF}$ is defined on $\CC^{N}/\mathbb{T}_1$, where the global constant phase difference is moduled out, the inverse $(\mathbb{M}_a^{\fF})^{-1}$ does not distinguish between the global constant phase difference. Thus, we have the following definition.
\begin{defn}[Solution set]
When $M\geq 4N-2$ and $R_{\fF}$ generic, given $\psi_0\in\CC^N$ and $\va=|\fF\psi_0|$, we define the {\it solution set} as
$$
S_{\va}:=\{e^{it}\fF \psi_0:\,t\in [0,2\pi)\}.
$$
\end{defn}
Due to the above Theorem, when $M\geq 4N-2$, generically we have
$S_{\va}=R_{\fF}\cap \mathbb{T}_{\va}\cong \mathbb{T}_1$. Recall that we assume that for all $\vz\in S_{\va}$, $|\vz(i)|\neq 0$ for all $i=1,\ldots,M$.
Before proceeding, we have some immediate consequences of the Theorem.  
\begin{lemma}\label{lemma:amplitude_unique}
When $M\geq 4N-2$ and $R_{\fF}$ generic, for all $\vz,\vw\in R_{\fF}$ and $\vz \neq c\vw$ for $c\in \mathbb{T}_1$, then $|\vz|\neq |\vw|$. Moreover, not all $\mathbb{T}_{\va}$, where $\va\in \RR_+^{M}$, intersects $R_{\fF}$. 
\end{lemma}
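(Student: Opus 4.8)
The plan is to read off both assertions from the injectivity of the amplitude map $\mathbb{M}_a^{\fF}$ supplied by Theorem \ref{thm:balan:2006:thm33}, together with an elementary dimension count for the second claim.

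For the first assertion I would argue by contrapositive, transporting everything from $R_{\fF}$ back to $\CC^N$ through $\fF$. Since $\{\vf_i\}_{i=1}^M$ is a frame, $\fF$ is injective and $(\fF^*\fF)^{-1}\fF^*$ is a left inverse, so each $\vz\in R_{\fF}$ is uniquely of the form $\vz=\fF\vp$ with $\vp=(\fF^*\fF)^{-1}\fF^*\vz\in\CC^N$, and similarly $\vw=\fF\vq$. Assuming $|\vz|=|\vw|$ gives $|\fF\vp|=|\fF\vq|$, i.e. $\mathbb{M}_a^{\fF}(\vp)=\mathbb{M}_a^{\fF}(\vq)$. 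Because $M\geq 4N-2$ with $R_{\fF}$ generic, Theorem \ref{thm:balan:2006:thm33} makes $\mathbb{M}_a^{\fF}$ injective on $\CC^N/\mathbb{T}_1$, forcing $\vp=c\vq$ for some $c\in\mathbb{T}_1$ and hence $\vz=\fF\vp=c\fF\vq=c\vw$. This contradicts $\vz\neq c\vw$, so $|\vz|\neq|\vw|$ whenever $\vz$ and $\vw$ are not unit-modulus scalar multiples of one another.

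For the second assertion I would note that $\mathbb{T}_{\va}\cap R_{\fF}\neq\emptyset$ exactly when $\va=|\vu|$ for some $\vu\in R_{\fF}$, i.e. exactly when $\va$ lies in the image of the modulus map $\vu\mapsto|\vu|$ restricted to $R_{\fF}$ (equivalently, in the image of $\mathbb{M}_a^{\fF}$). The claim then reduces to showing this image cannot exhaust $\RR_+^M$, which I would settle by a dimension count rather than by genericity: $\mathbb{M}_a^{\fF}$ is Lipschitz, being $|\cdot|$ composed with the linear map $\fF$, and it factors through $\CC^N/\mathbb{T}_1$, a space of real dimension $2N-1$, so its image has Hausdorff dimension at most $2N-1$. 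Since $M\geq 4N-2>2N-1$ for every $N\geq 1$, this image has Lebesgue measure zero in $\RR^M$; hence $\RR_+^M$ is not contained in it and there exists $\va\in\RR_+^M$ with $\mathbb{T}_{\va}\cap R_{\fF}=\emptyset$.

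The bijective bookkeeping in the first part is routine; the single point requiring care is the null-image step in the second. Since $\vu\mapsto|\vu|$ fails to be differentiable on the coordinate hyperplanes where an entry vanishes, I would avoid Sard's theorem and instead rely on the elementary fact that Lipschitz maps carry Lebesgue-null sets to Lebesgue-null sets: realizing the image as the Lipschitz image of a lower-dimensional set (null after padding into $\RR^M$, which is legitimate because $2N-1<M$) yields the measure-zero conclusion. This is the only genuine, if mild, obstacle.
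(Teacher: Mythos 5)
Your proposal is correct and takes essentially the same route as the paper: the first claim is deduced from the injectivity of $\mathbb{M}_a^{\fF}$ guaranteed by Theorem \ref{thm:balan:2006:thm33}, and the second from the observation that $\mathbb{T}_{\va}\cap R_{\fF}\neq \emptyset$ forces $\va$ to lie in the image of $\mathbb{M}_a^{\fF}$, combined with a dimension count showing that image cannot exhaust $\RR_+^{M}$. The only difference is one of rigor rather than strategy: where the paper simply says ``by counting the dimension,'' you make the count precise by factoring through $\CC^{N}/\mathbb{T}_1$ and invoking the fact that Lipschitz maps do not raise Hausdorff dimension (so the image is Lebesgue-null in $\RR^{M}$ since $2N-1<M$), correctly noting that Sard's theorem is unavailable because $\vu\mapsto|\vu|$ fails to be differentiable where entries vanish.
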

\begin{proof}
The first claim is immediate from Theorem \ref{thm:balan:2006:thm33}. Note that when $R_{\fF}$ and $\mathbb{T}_{\va}$ intersect, it means that $\va$ comes from $\mathbb{M}_a^{\fF}$. Also note that the mapping $\mathbb{M}_a^{\fF}:\CC^{N}\to \RR_+^{M}$ can be viewed as an embedding of $\CC^{N}$ into $\CC^{M}$ followed by a nonlinear mapping from $R_{\fF}$ to $\RR_+^{M}$. Here the nonlinear mapping is 1-1 when $M\geq 4N-2$ by Theorem \ref{thm:balan:2006:thm33}. By counting the dimension, we know that the mapping $\mathbb{M}_a^{\fF}$ can not be onto, and hence the second claim is proved. \end{proof}
We conclude from this Lemma that for $\vz\in R_{\fF}$ with $\vb=|\vz|$, there exists a unique phase $\boldsymbol{\phi}^{\vb}\in \mathbb{T}_{\boldsymbol{1}}$ so that $\vz=\vb e^{i(t+\boldsymbol{\phi}^{\vb})}$ for some $t\in[0,2\pi)$. Here the subscript $\vb$ in $\boldsymbol{\phi}^{\vb}$ indicates the dependence of the phase on the amplitude $\vb$. 

To study the convergence behavior of the AP algorithm, we need the following definition.
\begin{defn}[Stagnation set]
The {\it stagnation set  (or the fixed points) of the AP algorithm when the given data is $\va$} is defined as 
\begin{align}
\Theta^{\textup{AP}}_{\va,0}:=\{\zeta\in R_{\fF}:\, P_{\fF}P_{\va}\zeta = \zeta\}.
\end{align}
\end{defn}
Note that $\Theta^{\textup{AP}}_{\va,0}\subset R_{\fF}\cap B_{\|\va\|}(0)$. See Figure \ref{lemma:obstructive_set:fig} for illustration of the stagnation set. {Note an important fact about the stagnation set -- it depends on the definition of $P_{\va}$. Indeed, $P_{\va}$ in general cannot be defined on any zero entry of $\vz\in\CC^m$. However, if $\vz(i)=0$, we define the $i$-th entry of $P_{\va}\vz$ to be $\va(i)$. Recall that there are a lot of freedoms to do so; for example, we could define $P_{\va}\vz$ to be $\va(i)e^{i\theta}$, where $\theta\in [0,2\pi)$; with different $\theta$ we might have different stagnation set. This is the reason why we denote the stagnation set as $\Theta^{\textup{AP}}_{\va,0}$, where $0$ indicates that $\theta=0$ in our definition.} 

Now, we can compare the definition of the stagnation set with the solution set of the phase retrieval  problem. Clearly the solution set $S_{\va}\subset \Theta^{\textup{AP}}_{\va,0}$.
The stagnation set reflects the fact that $P_{\va}\zeta-\zeta\neq0$ does not imply $\zeta\neq P_{\fF}P_{\va}\zeta$; that is, when $\zeta= P_{\fF}P_{\va}\zeta$, $\zeta$ may or may not be the solution.

\begin{figure}[t]
  \begin{center}
     \includegraphics[width=0.75\textwidth]{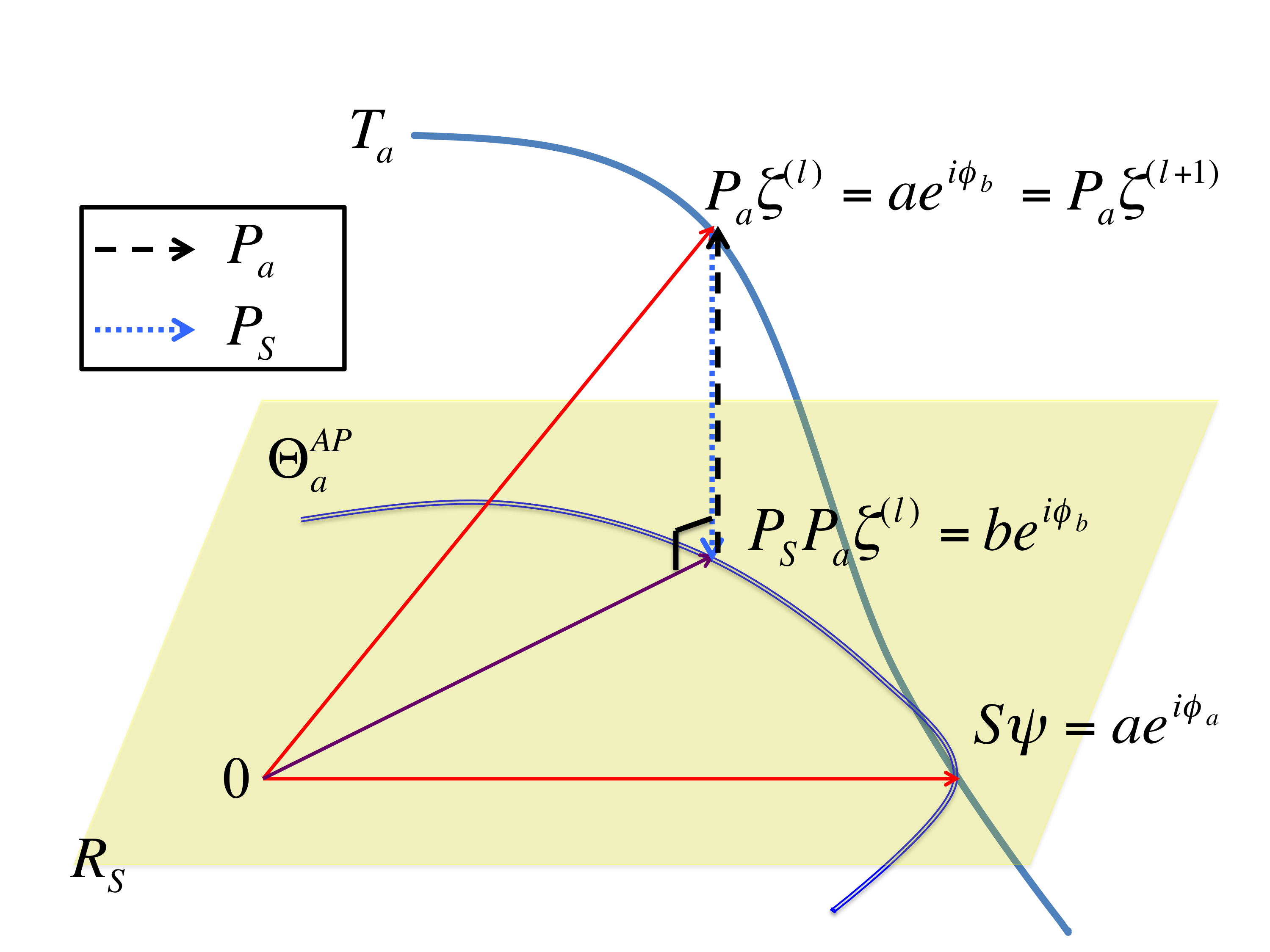}
      \end{center}
      \caption{The stagnation set $\Theta^{\textup{AP}}_{\va,0}$. $T_{\va}$ is illustrated as a curve to emphasize the nonlinear nature of the $P_{\va}$ map.}
 \label{lemma:obstructive_set:fig}
\end{figure}

\subsection{Some properties of the stagnation set}
We take a closer look at the $\Theta^{\textup{AP}}_{\va,0}$ set. An immediate observation is the following co-dimension $1$ quantification of the stagnation set.
\begin{lemma}
The stagnation set $\Theta^{\textup{AP}}_{\va,0}$ is of co-dimension $1$.
\end{lemma}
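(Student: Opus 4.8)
The plan is to collapse the vector fixed-point relation $P_{\fF}P_{\va}\zeta=\zeta$ into a single scalar real-analytic constraint on $\zeta$, and to read the codimension-one statement off that one equation. The starting observation is that at any stagnation point the residual $P_{\va}\zeta-\zeta$ lies in $R_{\fF}^{\perp}$: since $P_{\fF}$ is the self-adjoint idempotent projection onto $R_{\fF}$ and $\zeta\in R_{\fF}$ (so $P_{\fF}\zeta=\zeta$), the stagnation condition gives $P_{\fF}(P_{\va}\zeta-\zeta)=P_{\fF}P_{\va}\zeta-\zeta=0$, i.e.\ $P_{\va}\zeta-\zeta\in\ker P_{\fF}=R_{\fF}^{\perp}$.

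Next I would pair the residual against $\zeta$ itself. Because $\zeta\in R_{\fF}$ while $P_{\va}\zeta-\zeta\in R_{\fF}^{\perp}$, the Hermitian pairing $\langle P_{\va}\zeta-\zeta,\zeta\rangle$ vanishes automatically; the gain is that this pairing is explicitly computable. Entry by entry one checks that $(P_{\va}\zeta)(i)\overline{\zeta(i)}=\va(i)|\zeta(i)|$ in every case --- both in the branch $\zeta(i)\neq0$ and, harmlessly, in the convention-dependent branch $\zeta(i)=0$, where both sides contribute $0$. Hence $\langle P_{\va}\zeta,\zeta\rangle=\langle\va,|\zeta|\rangle$ is real while $\langle\zeta,\zeta\rangle=\||\zeta|\|^{2}$, so every element of $\Theta^{\textup{AP}}_{\va,0}$ satisfies the single real equation
\begin{align}
\langle\va-|\zeta|,\,|\zeta|\rangle=0,\nonumber
\end{align}
equivalently $\langle\va,|\zeta|\rangle=\||\zeta|\|^{2}$.

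I would then close by interpreting this one equation geometrically. It defines a real hypersurface $H\subset R_{\fF}$ and yields the inclusion $\Theta^{\textup{AP}}_{\va,0}\subseteq H$. The constraint is nondegenerate: along any ray $\{t\zeta_{0}:t>0\}$ the left side $\langle\va,|t\zeta_{0}|\rangle=t\langle\va,|\zeta_{0}|\rangle$ grows linearly while $\||t\zeta_{0}|\|^{2}=t^{2}\||\zeta_{0}|\|^{2}$ grows quadratically, so (using $\va(i)\neq0$) exactly one positive dilation satisfies it. Thus $H$ is a genuine star-shaped hypersurface, of codimension one in the real $2N$-dimensional space $R_{\fF}$, and membership in $\Theta^{\textup{AP}}_{\va,0}$ forces this one scalar identity beyond the linear constraint of lying in $R_{\fF}$ --- the asserted codimension-one quantification.

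The main obstacle is interpretive rather than computational: the argument delivers only the inclusion $\Theta^{\textup{AP}}_{\va,0}\subseteq H$, so ``of codimension $1$'' should be understood as ``cut out within $R_{\fF}$ by this one nondegenerate scalar equation.'' To make the hypersurface claim precise one should verify that the gradient of $\zeta\mapsto\langle\va-|\zeta|,|\zeta|\rangle$ is nonvanishing along the relevant part of $\Theta^{\textup{AP}}_{\va,0}$, so that $H$ is smooth of codimension one there; a secondary, minor point is the discontinuity of $P_{\va}$ at vectors with zero entries, which is neutralized here precisely because the pairing $(P_{\va}\zeta)(i)\overline{\zeta(i)}$ reduces to $\va(i)|\zeta(i)|$ in both branches of the definition of $P_{\va}$.
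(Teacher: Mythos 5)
Your proposal is correct and takes essentially the same route as the paper: from the fixed-point relation you deduce $\langle (I-P_{\va})\zeta,\zeta\rangle=0$ and expand it entrywise to the single real equation $\sum_{k=1}^{M}\big(|\zeta(k)|-\va(k)\big)|\zeta(k)|=0$, which is exactly the paper's equation (\ref{Proof:Lemma:Codim1:eq1}) before completing the square. The only differences are cosmetic --- your ray/dilation nondegeneracy check in place of the paper's reading of the constraint as the sphere $\big\||\eta|-\va/2\big\|=\|\va\|_2/2$ in amplitude space, plus your (welcome) explicit handling of the zero-entry branch of $P_{\va}$, which the paper's proof glosses over.
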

\begin{proof}
Suppose $\eta\in\Theta^{\textup{AP}}_{\va,0}$. By definition we have $\eta= P_{\fF}P_{\va}\eta$. Thus we know $P_{\fF}(I-P_{\va})\eta=0$ and hence $\eta^*(I-P_{\va})\eta=0$. A direct expansion leads to $\sum_{k=1}^{M}(|\eta(k)|-\va(k))|\eta(k)|=0$. Note that this equality is equivalent to the following 
\begin{align}\label{Proof:Lemma:Codim1:eq1}
\sum_{k=1}^{M}\left(|\eta(k)|-\frac{\va(k)}{2}\right)^2=\frac{1}{4}\sum_{k=1}^{M}\va(k)^2.
\end{align}
This equality leads to the co-dimension one conclusion.
\end{proof}
The equation (\ref{Proof:Lemma:Codim1:eq1}) indicates that the non-negative real vector associated with $\eta$ in the stagnation set is on the sphere with the center $\va/2$ and the radius $\|\va\|_2/2$. Define
\[
Z:=\{\eta\in \CC^M|\, \eta(i)\neq 0,\,i=1,\ldots,M\}.
\]

\begin{lemma}\label{lemma:ThetaAPisaClosedSet}
$\Theta^{\textup{AP}}_{\va,0}\cap Z$ is a closed subset on $Z\cap R_{\fF}$.
\end{lemma}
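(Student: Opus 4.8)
The plan is to exhibit $\Theta^{\textup{AP}}_{\va,0}\cap Z$ as the zero set of a single continuous map restricted to the subspace $Z\cap R_{\fF}$, after which closedness follows from the elementary fact that the preimage of a closed set under a continuous map is closed. The entire difficulty is therefore concentrated in identifying \emph{where} the relevant maps are continuous, and this is precisely why the statement is phrased over $Z$ rather than over all of $R_{\fF}$.

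First I would record that $P_{\fF}=\fF(\fF^*\fF)^{-1}\fF^*$, being linear, is continuous on all of $\CC^M$. The crux is the nonlinear operator $P_{\va}$. On the full space $\CC^M$ it is discontinuous exactly at vectors having a vanishing entry; this is the very discontinuity flagged in the footnote to the definition of $P_{\va}$. However, on $Z$ every coordinate of $\zeta$ is nonzero, so $\chi_{\zeta(i)}=1$ for all $i$ and the defining formula collapses to $(P_{\va}\zeta)(i)=\va(i)\,\zeta(i)/|\zeta(i)|$. Each coordinate map $w\mapsto \va(i)\,w/|w|$ is continuous on $\CC\setminus\{0\}$, so $P_{\va}$ is continuous on $Z$. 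This restriction to $Z$ is the heart of the matter: it is what removes the obstruction, and it is the one step I expect to require genuine care in the writeup.

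Consequently the map $G\colon Z\to\CC^M$ defined by $G(\zeta):=P_{\fF}P_{\va}\zeta-\zeta$ is continuous, being a composition and difference of maps continuous on $Z$ (note that $P_{\fF}$ is applied only once, after $P_{\va}$, so no further zero-entry issue arises). By the definition of the stagnation set,
\[
\Theta^{\textup{AP}}_{\va,0}\cap Z=\{\zeta\in Z\cap R_{\fF}:\,G(\zeta)=0\}=G^{-1}(\{0\})\cap\bigl(Z\cap R_{\fF}\bigr).
\]
Since $\{0\}$ is closed in $\CC^M$ and $G$ is continuous on $Z$, the set $G^{-1}(\{0\})$ is closed in $Z$; intersecting with the subspace $Z\cap R_{\fF}$ and invoking the relative topology then shows that $\Theta^{\textup{AP}}_{\va,0}\cap Z$ is closed in $Z\cap R_{\fF}$, which is the claim.

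The only genuine subtlety is thus the discontinuity of $P_{\va}$ at vectors with zero entries; once one passes to $Z$ this disappears and the remaining argument is purely topological and routine. I would be careful to state continuity of $P_{\va}$ over $Z$ as a small standalone observation, since the same fact is likely to be reused when analyzing the convergence of the AP iteration away from zero-entry configurations.
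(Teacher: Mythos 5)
Your proof is correct and takes essentially the same route as the paper: both exhibit $\Theta^{\textup{AP}}_{\va,0}\cap Z$ as the preimage of $\{0\}$ under the continuous map $I-P_{\fF}P_{\va}$ (your map $G$, up to sign) restricted to $Z\cap R_{\fF}$, with continuity holding precisely because all entries are nonzero on $Z$. Your standalone verification that $P_{\va}$ is continuous on $Z$ simply makes explicit what the paper asserts in a single line.
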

\begin{proof}
Note that $Z$ is an open subset of $\CC^{M}$, so $Z\cap R_{\fF}$ is an open subset of $R_{\fF}$ associated with the induced topology. Clearly, for $\eta\in\Theta^{\textup{AP}}_{\va,0}\cap Z$, we have $P_{\fF}(I-P_{\va})\eta=(I-P_{\fF}P_{\va})\eta=0$. As $P_{\fF}(I-P_{\va})=I-P_{\fF}P_{\va}$ is a continuous operator on the set $Z\cap R_{\fF}$, we conclude that 
$$
\Theta^{\textup{AP}}_{\va,0}\cap Z=[(I-P_{\fF}P_{\va})|_{Z\cap R_{\fF}}]^{-1}(0)
$$ 
is a closed subset on $Z\cap R_{\fF}$.
\end{proof}

We know that the solution set $S_{\va}$ is a closed $S^1$ set. We now show that the same geometric feature holds for a vector in the stagnation point when its all entries are non-zero.
\begin{lemma}
If $\eta\in\Theta^{\textup{AP}}_{\va,0}\cap Z$, $e^{i\theta}\eta\in  \Theta^{\textup{AP}}_{\va,0}\cap Z$ for all $\theta\in [0,2\pi)$. 
\end{lemma}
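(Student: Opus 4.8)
The plan is to exploit the fact that both operators defining the fixed-point equation are \emph{equivariant} under multiplication by a global phase $e^{i\theta}$, at least when restricted to the set $Z$ of vectors with no vanishing entry. Concretely, I want to show $P_{\fF}P_{\va}(e^{i\theta}\eta) = e^{i\theta}\,P_{\fF}P_{\va}\eta$ for $\eta\in Z$, after which the hypothesis $P_{\fF}P_{\va}\eta=\eta$ immediately forces $e^{i\theta}\eta$ to be a fixed point as well.

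First I would dispose of the ambient-set membership. Since $R_{\fF}$ is a linear subspace and $\eta\in R_{\fF}$, we have $e^{i\theta}\eta\in R_{\fF}$; and since $\eta(i)\neq 0$ forces $e^{i\theta}\eta(i)\neq 0$ for every $i$, we also have $e^{i\theta}\eta\in Z$. So only the fixed-point condition $P_{\fF}P_{\va}(e^{i\theta}\eta)=e^{i\theta}\eta$ remains to be checked. The key computation is the phase-equivariance of $P_{\va}$ on $Z$: because every indicator satisfies $\chi_{\eta(i)}=1$ there, the amplitude correction reads $(P_{\va}\eta)(i)=\va(i)\,\eta(i)/|\eta(i)|$, and replacing $\eta(i)$ by $e^{i\theta}\eta(i)$ simply pulls the scalar $e^{i\theta}$ out of the ratio since $|e^{i\theta}\eta(i)|=|\eta(i)|$. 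Hence $P_{\va}(e^{i\theta}\eta)=e^{i\theta}P_{\va}\eta$. Combining this with the linearity of $P_{\fF}$, which yields $P_{\fF}(e^{i\theta}\vw)=e^{i\theta}P_{\fF}\vw$, gives $P_{\fF}P_{\va}(e^{i\theta}\eta)=e^{i\theta}P_{\fF}P_{\va}\eta=e^{i\theta}\eta$, where the last equality is the hypothesis $\eta\in\Theta^{\textup{AP}}_{\va,0}$. Thus $e^{i\theta}\eta\in\Theta^{\textup{AP}}_{\va,0}\cap Z$, as desired.

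There is essentially no genuine obstacle here; the argument is a one-line equivariance check once the two operators are recognized as commuting with scalar phase multiplication. The only subtlety worth flagging, and the reason the statement is deliberately restricted to $\Theta^{\textup{AP}}_{\va,0}\cap Z$ rather than to all of $\Theta^{\textup{AP}}_{\va,0}$, is that the identity $P_{\va}(e^{i\theta}\eta)=e^{i\theta}P_{\va}\eta$ truly requires $\eta\in Z$: at a vector with a zero entry the operator $P_{\va}$ was defined to output amplitude $\va(i)$ with phase $0$ irrespective of the incoming phase, so it is neither continuous nor phase-equivariant there. Restricting to $Z$ is precisely what makes the global-phase orbit of a stagnation point remain inside the stagnation set.
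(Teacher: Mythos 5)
Your proof is correct and follows exactly the paper's argument: phase-equivariance of $P_{\va}$ on vectors with no zero entries, combined with the linearity of $P_{\fF}$, yields $P_{\fF}P_{\va}(e^{i\theta}\eta)=e^{i\theta}\eta$. The additional remarks (membership of $e^{i\theta}\eta$ in $Z$ and $R_{\fF}$, and why the restriction to $Z$ is essential) only make explicit what the paper leaves implicit.
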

\begin{proof}
When all entries of $\eta$ are non-zero, it is clear that $P_{\va}e^{i\theta}\eta=e^{i\theta}P_{\va}\eta$. Since $P_{\fF}$ is linear, we further conclude that $P_{\fF}P_{\va}e^{i\theta}\eta=e^{i\theta}\eta$, which concludes the proof. 

\end{proof}

\subsection{Some properties of the $I-P_{\va}$ operator}
In this subsection, we take a closer look at the $I-P_{\va}$ operator, which is related to the optimization approach discussed in Section \ref{sec:AP:optimization}.
\begin{lemma}\label{lemma:3.11}
Take $M=1$ so that $\va=a$. $I-P_{\va}$ is an onto function from $\CC\backslash \{0\}$ to $\CC$. 
\end{lemma}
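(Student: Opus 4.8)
The plan is to reduce everything to an explicit polar-coordinate computation and then exhibit a preimage for each target value. First I would specialize the definition of $P_{\va}$ to the scalar case $M=1$, writing $a>0$ for the single (nonzero) amplitude. Since the domain is $\CC\setminus\{0\}$, every input $z$ satisfies $\chi_{z}=1$, so the definition collapses to $P_{\va}z = a\frac{z}{|z|}$, the radial projection of $z$ onto the circle of radius $a$. Consequently
\[
(I - P_{\va})z \;=\; z - a\frac{z}{|z|} \;=\; (|z| - a)\frac{z}{|z|},
\]
which shows that $(I-P_{\va})z$ always points in the same or opposite direction as $z$ and has modulus $\bigl||z| - a\bigr|$.

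Next I would pass to polar coordinates, writing $z = r e^{i\phi}$ with $r>0$ and $\phi\in[0,2\pi)$. The display above then becomes $(I - P_{\va})z = (r - a)e^{i\phi}$. As $r$ ranges over $(0,\infty)$ the scalar factor $r-a$ ranges over all of $(-a,\infty)$, and $e^{i\phi}$ sweeps out the entire unit circle, so the image of $I - P_{\va}$ is the set $\{(r-a)e^{i\phi}:\, r>0,\ \phi\in[0,2\pi)\}$. This description already makes surjectivity plausible, since a negative radial factor combined with a phase shift of $\pi$ recovers the directions lost when $r<a$.

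To finish I would fix an arbitrary $w\in\CC$ and produce an explicit $z\neq 0$ with $(I-P_{\va})z = w$. For $w\neq 0$, the choice $z = (a+|w|)\frac{w}{|w|}$ has modulus $a+|w|>0$, hence lies in $\CC\setminus\{0\}$, and a one-line substitution gives $(I-P_{\va})z = (a+|w|-a)\frac{w}{|w|} = w$. For $w=0$ any point on the circle $|z|=a$ works, for instance $z=a$, since $(I-P_{\va})(a) = (a-a)\cdot 1 = 0$. These two cases together cover every $w\in\CC$ and establish that $I-P_{\va}$ is onto.

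The computation itself is routine, so the only point genuinely requiring care — and the one I would flag as the real content of the statement — is the role of the excluded origin. The target $w=0$ must be treated separately because its preimage is the whole circle $|z|=a$ rather than the origin itself, so one cannot simply ``invert'' the formula; and the restriction of the domain to $\CC\setminus\{0\}$ is precisely what yields the clean expression $P_{\va}z = a\frac{z}{|z|}$, avoiding the discontinuity of $P_{\va}$ at $z=0$ recorded in its definition.
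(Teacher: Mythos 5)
Your proof is correct and takes essentially the same route as the paper: both exhibit the explicit preimage $(a+|w|)\frac{w}{|w|}$ for a nonzero target $w$ and handle the target $0$ separately with a point on the circle $|z|=a$ (the paper writes $au$ with $u\in\mathbb{T}_1$, you take $u=1$). The extra polar-coordinate discussion is harmless exposition on top of the same witness construction.
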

\begin{proof}
Indeed, for any given $\zeta\in\CC$  we are able to find $\eta\in\CC\backslash \{0\}$,
\begin{align}\label{Definition:Onto:eta}
\eta=\left\{\begin{array}{ll}
(|\zeta|+a)\frac{\zeta}{|\zeta|}&\mbox{ when }\zeta\neq 0\\
au &\mbox{ when } \zeta=0
\end{array}\right.,
\end{align}
where $u$ is randomly chosen from $\mathbb{T}_1$, so that $\eta-P_{\va}\eta=\zeta$. Thus, $I-P_{\va}$ is onto. 
\end{proof}
On the other hand, we know that $I-P_{\va}$ is not one-to-one. A quick observation of (\ref{Definition:Onto:eta}) is that when there is an entry $0$ in $\zeta$, we could find more than one $\eta$ so that $\eta-P_{\va}\eta=\zeta$, and the more entries of $\zeta$ are zero, the more $\eta$ we could find. We now take a closer look at this one-to-one issue. Clearly by our definition, when $M=1$ so that $\va=a>0$, we have $(I-P_{\va})(0)=-a$. For the non-zero input to $I-P_{\va}$, we have the following Lemma.
\begin{lemma}\label{Lemma:IminusPainverse}
Take $M=1$ so that $\va=a>0$ and $\zeta\in \CC$. Then we have the following facts for the operator $I-P_{\va}:\CC\backslash\{0\}\to\CC$:
\begin{enumerate}
\item when $|\zeta|>a$, $\eta=(|\zeta|+a)\frac{\zeta}{|\zeta|}\in\CC\backslash\{0\}$ is the only solution to $(I-P_{\va})\eta=\zeta$;
\item when $|\zeta|=a$, $\eta=2\zeta\in\CC\backslash\{0\}$ is the only solution to $(I-P_{\va})\eta=\zeta$;
\item when $0<|\zeta|<a$, $\eta_1=(|\zeta|-a)\frac{\zeta}{|\zeta|}\in\CC\backslash\{0\}$ and $\eta_2=(|\zeta|+a)\frac{\zeta}{|\zeta|}\in\CC\backslash\{0\}$ are two solutions to $(I-P_{\va})\eta=\zeta$;
\item when $|\zeta|=0$, $\eta=ae^{i\theta}$, where $\theta\in[0,2\pi)$, are all solutions to $(I-P_{\va})\eta=\zeta$. 
\end{enumerate}
\end{lemma}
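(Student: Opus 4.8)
The plan is to collapse all four cases into a single real computation by passing to polar coordinates. Since the domain is $\CC\backslash\{0\}$, every input can be written as $\eta=\rho e^{i\phi}$ with $\rho=|\eta|>0$ and $\phi\in[0,2\pi)$. For $M=1$ the amplitude correction operator acts by $P_{\va}\eta=a\frac{\eta}{|\eta|}=ae^{i\phi}$, so a one-line substitution gives the clean identity
\begin{align}
(I-P_{\va})\eta=\rho e^{i\phi}-ae^{i\phi}=(\rho-a)e^{i\phi}.\nonumber
\end{align}
The structural feature I would exploit is that the scalar $\rho-a$ is \emph{real}, so the image $(\rho-a)e^{i\phi}$ always lies on the line through the origin in direction $e^{i\phi}$: its phase equals $\phi$ when $\rho>a$ and $\phi+\pi$ when $\rho<a$. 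Solving $(I-P_{\va})\eta=\zeta$ then reduces to matching a modulus and a phase, with careful sign bookkeeping for $\rho-a$.

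Next I would handle $\zeta\neq 0$ by writing $\zeta=|\zeta|e^{i\psi}$ and taking moduli of the identity, which yields $|\rho-a|=|\zeta|$, i.e. $\rho=a+|\zeta|$ or $\rho=a-|\zeta|$. The branch $\rho=a+|\zeta|$ is always admissible since $\rho>0$; it forces $\rho-a=|\zeta|>0$, hence $e^{i\phi}=\zeta/|\zeta|$, and returns $\eta=(|\zeta|+a)\frac{\zeta}{|\zeta|}$. The branch $\rho=a-|\zeta|$ is admissible only when $\rho>0$, i.e. $|\zeta|<a$; there $\rho-a=-|\zeta|<0$ flips the phase, forcing $e^{i\phi}=-\zeta/|\zeta|$ and returning $\eta=(|\zeta|-a)\frac{\zeta}{|\zeta|}$. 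Reading off which branches survive gives (1)--(3) directly: for $|\zeta|>a$ only the first branch survives; for $0<|\zeta|<a$ both survive; for $|\zeta|=a$ the second branch degenerates to $\rho=0$, which is excluded from the domain, leaving only $\eta=2\zeta$.

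For case (4), $\zeta=0$, the identity becomes $(\rho-a)e^{i\phi}=0$, and since $e^{i\phi}\neq 0$ this forces $\rho=a$ with $\phi$ arbitrary; hence $\eta=ae^{i\theta}$, $\theta\in[0,2\pi)$, are precisely the solutions.

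I do not anticipate a serious obstacle. The single point demanding genuine care is the phase flip when $\rho<a$, since that is exactly the mechanism producing the second preimage $\eta_1$ in (3)—overlooking it would cause one to miss half the solution set. The only other delicate spot is the boundary $|\zeta|=a$, where uniqueness in (2) hinges on the domain restriction $\eta\neq 0$ ruling out the otherwise-valid $\rho=a-|\zeta|=0$ candidate.
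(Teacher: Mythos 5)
Your proof is correct and follows essentially the same route as the paper's: both write $\eta$ in polar form, observe that $(I-P_{\va})\eta=(\rho-a)e^{i\phi}$ is a real multiple of $e^{i\phi}$, and then resolve the two sign branches of $\rho-a$ with a positivity check ($\rho>0$) to rule out inadmissible solutions. The only cosmetic difference is that the paper organizes the casework by the two possible phases of $\zeta$ (namely $\phi$ or $\phi+\pi$) and solves for the modulus, whereas you take moduli first to get $\rho=a\pm|\zeta|$ and then recover the phase; your version also handles all four cases uniformly, including case (4), which the paper leaves implicit.
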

\begin{proof}
To show 1, take $\eta=be^{i\theta}\in \CC$, where $b>0$. Note that there are only two possibilities of the phase relationship between $\zeta=(I-P_{\va})\eta$ and $\eta$; that is, $\zeta$ has phase $\theta$ or $\theta+\pi$. 
Suppose $\zeta$ has the phase $\theta$, we have $\zeta=(I-P_{\va})\eta=(b-a)e^{i\theta}=|\zeta|e^{i\theta}$, which leads to $\eta=(|\zeta|+a)e^{i\theta}$. Suppose $\zeta$ has the phase $\theta+\pi$, we have $\zeta=(I-P_{\va})\eta=(a-b)e^{i\theta}=|\zeta|e^{i\theta}$, which leads to $b=a-|\zeta|<0$, which is absurd. We thus have the proof for the first claim.

The proofs of the other claims are the same. For example, by a direct calculation, we know that for a non-zero $\zeta\in\CC$ so that $0<|\zeta|<a$, we could find $\eta_1=(|\zeta|+a)\zeta/|\zeta|$ and $\eta_2=(|\zeta|-a)\zeta/|\zeta|$ so that $(I-P_{\va})\eta_1=(I-P_{\va})\eta_2=\zeta$. We thus finish the proof.
\end{proof}

When $M>1$, note that $I-P_{\va}$ acts on $\CC^{M}$ entry-wisely, so we could apply Lemma \ref{Lemma:IminusPainverse} to understand $I-P_{\va}$. 
To do so, we define the following map $T:\CC^{M}\to \CC^M$, which maps $\zeta\in \CC^{M}$ to a set $T\zeta\subset \CC^M$, so that the element $\eta\in T\zeta$ satisfies
\begin{align}\label{Definition:oneone:eta}
\eta(k)=\left\{\begin{array}{ll}
0&\mbox{ when }\zeta(k)=-\va(k)\\
(|\zeta(k)|+\va(k))\frac{\zeta(k)}{|\zeta(k)|}&\mbox{ when }|\zeta(k)|\geq\va(k)\\
(|\zeta(k)|\pm\va(k))\frac{\zeta(k)}{|\zeta(k)|}&\mbox{ when }0<|\zeta(k)|< \va(k)\\
\va(k)u,\,u\in \mathbb{T}_1 &\mbox{ when } \zeta(k)=0\,.
\end{array}\right.
\end{align}
Somehow we could view $T$ as an ``inverse'' of the $I-P_{\va}$ operator, which is precisely described in the following corollary. Recall that $Z$ is an open dense subset of $\CC^M$.
\begin{cor}\label{Corollary:IminusPa}
When $\zeta(k)\geq \va(k)$ for all $k$, there is a unique point in $\eta\in  Z$ so that $(I-P_{\va})\eta=\zeta$; that is, $I-P_{\va}$ is one-to-one only on the set 
\begin{align*}
Y_0:=&\{\eta\in Z|\, |\eta(i)|>2\va(i),\,i=1,\ldots,M\}\\
=&\{\eta\in Z|\, \zeta:=(I-P_{\va})(\eta),\,|\zeta(i)|>\va(i),\,i=1,\ldots,M\}.
\end{align*}
Moreover, we have that $I-P_{\va}$ is $2^k$-to-one on the set
\begin{align*}
Y_k:=&\{\eta\in  Z|\, \mbox{there are }1\leq i_1<\ldots<i_k\leq M\\
&\quad\mbox{ such that }0<|\eta(i_l)|<\va(i_l)\mbox{ or }\va(i)<|\eta(i_l)|<2\va(i_l), \,l=1,\ldots,k\\
&\quad\mbox{ and }|\eta(l)|>2\va(l),\,l\neq i_k\}
\end{align*}
and $I-P_{\va}$ is infinite-to-one on the set
\begin{align*}
Y_\infty:=\{\eta\in  Z|\, \mbox{there is }1\leq i\leq M\mbox{ such that }|\eta(i_l)|=\va(i_l)\}.
\end{align*}
\end{cor}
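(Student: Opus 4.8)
The plan is to reduce the entire statement to the scalar analysis already carried out in Lemma \ref{Lemma:IminusPainverse}, using the fact that $I-P_{\va}$ acts on $\CC^M$ one coordinate at a time. Since the $k$-th entry of $(I-P_{\va})\eta$ depends only on $\eta(k)$, the fiber $(I-P_{\va})^{-1}(\zeta)\cap Z$ splits as a Cartesian product of the single-coordinate fibers. Hence the number of preimages in $Z$ over a given $\zeta=(I-P_{\va})\eta$ equals the product over $k=1,\ldots,M$ of the number of nonzero scalar solutions of the $k$-th coordinate equation, and each of the three assertions will follow by counting these scalar factors and multiplying.

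First I would record the elementary magnitude identity that turns hypotheses on $\eta$ into hypotheses on $\zeta$. For $\eta(k)\neq 0$ the $k$-th entry of $(I-P_{\va})\eta$ equals $\frac{|\eta(k)|-\va(k)}{|\eta(k)|}\,\eta(k)$, so that $|\zeta(k)|=\big|\,|\eta(k)|-\va(k)\,\big|$. Reading this off against the four cases of Lemma \ref{Lemma:IminusPainverse}, I obtain the coordinate dictionary: $|\eta(k)|>2\va(k)$ yields $|\zeta(k)|>\va(k)$ and hence a unique nonzero scalar preimage (case (1)); each of the two ranges $0<|\eta(k)|<\va(k)$ and $\va(k)<|\eta(k)|<2\va(k)$ yields $0<|\zeta(k)|<\va(k)$ and hence exactly two nonzero scalar preimages (case (3)); and $|\eta(k)|=\va(k)$ yields $\zeta(k)=0$ and hence infinitely many scalar preimages $\va(k)e^{i\theta}$, $\theta\in[0,2\pi)$ (case (4)). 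This bookkeeping is the computational heart of the argument, though it is only a transcription of the lemma.

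With this dictionary the three claims are immediate products of coordinate counts. On $Y_0$ every coordinate satisfies $|\eta(i)|>2\va(i)$, so the fiber has $1^M=1$ element; moreover the two displayed descriptions of $Y_0$ coincide precisely because $|\eta(i)|>2\va(i)\iff|\zeta(i)|>\va(i)$, by the magnitude identity. On $Y_k$ exactly $k$ coordinates fall into the two-preimage ranges and the remaining $M-k$ coordinates satisfy $|\eta(l)|>2\va(l)$, so the fiber has $2^k\cdot 1^{M-k}=2^k$ elements. On $Y_\infty$ at least one coordinate has $|\eta(i)|=\va(i)$, contributing an infinite factor, so the fiber is infinite. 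In each case I would simply assemble the coordinate-wise counts through the product factorization of the fiber noted at the outset.

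The difficulty, such as it is, is entirely one of careful case management rather than genuine depth. The one point that needs attention is that the two disjoint $\eta$-ranges $0<|\eta(k)|<\va(k)$ and $\va(k)<|\eta(k)|<2\va(k)$ must both be folded into the single ``$0<|\zeta(k)|<\va(k)$'' case of the lemma, since this is exactly what makes the coordinate two-to-one there; the two scalar preimages of such a $\zeta(k)$ have magnitudes $\va(k)-|\zeta(k)|\in(0,\va(k))$ and $\va(k)+|\zeta(k)|\in(\va(k),2\va(k))$, one living in each range. I would also remark that restricting to $Z$ is what legitimately discards the $\eta(k)=0$ branch that appears in the definition of $T$ and in the scalar lemma, so that every factor in the product is a count of genuinely nonzero scalar solutions.
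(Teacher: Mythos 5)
Your proposal is correct and matches the paper's intended argument: the paper gives no separate proof of Corollary \ref{Corollary:IminusPa}, deriving it exactly as you do --- by observing that $I-P_{\va}$ acts entry-wise, so the fiber over $\zeta$ factors into the scalar fibers classified in Lemma \ref{Lemma:IminusPainverse}, and then counting $1$, $2$, or infinitely many solutions per coordinate. Your explicit magnitude dictionary $|\zeta(k)|=\big||\eta(k)|-\va(k)\big|$ and the product-of-fibers bookkeeping are precisely the content the paper leaves implicit in its definition of the map $T$.
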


Clearly $Z=Y_0\cup \left(\cup_{k=1}^MY_k\right)\cup Y_\infty$. Note the difference between $I-P_{\va}$ and $P_{\va}$ -- $P_{\va}$ is an infinity to one map. 
The results of Lemma \ref{lemma:3.11}, Lemma \ref{Lemma:IminusPainverse} and Corollary \ref{Corollary:IminusPa} are summarized in Figure \ref{Fig:LemmaAboutT}, which illustrates the complicated behavior of the operator $I-P_{\va}$.

\begin{figure}[t]
  \begin{center}
     \includegraphics[width=0.85\textwidth]{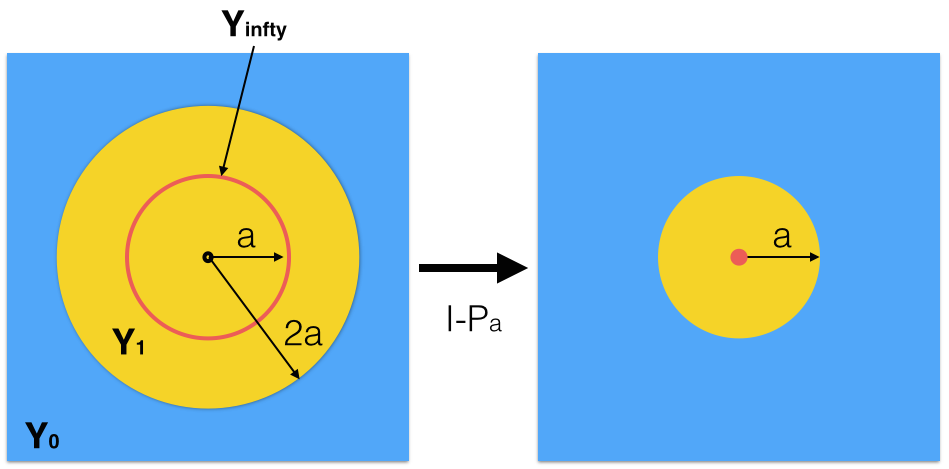}
      \end{center}
      \caption{The $I-P_{\va}$ map illustrated in $\CC$. Note that on the blue region on the left hand side, $Y_0$ is one-to-one mapped to the blue region on the right hand side; on the yellow region on the left hand side, $Y_1$ is $2^1$-to-one mapped to the yellow region on the right hand side; and the red region (the circle) on the left hand side $Y_{\infty}$, is mapped to the red point, $0$, on the right hand side.}
 \label{Fig:LemmaAboutT}
\end{figure}

\begin{lemma}\label{lemma:3.13}
$I-P_{\va}$ is an one-to-one map when it is restricted to $\Theta^{\textup{AP}}_{\va,0}\backslash \mathbb{T}_{\va}$. 
\end{lemma}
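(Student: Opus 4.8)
The plan is to argue by contradiction: suppose there are distinct $\eta,\eta'\in\Theta^{\textup{AP}}_{\va,0}\setminus\mathbb{T}_{\va}$ with $(I-P_{\va})\eta=(I-P_{\va})\eta'=:\zeta$, and derive a contradiction with the genericity of $R_{\fF}$. First I would record two consequences of membership in the stagnation set. Since $\eta\in R_{\fF}$ and $\eta=P_{\fF}P_{\va}\eta=P_{\fF}(\eta-\zeta)=\eta-P_{\fF}\zeta$, we get $P_{\fF}\zeta=0$, i.e. $\zeta\in R_{\fF}^{\perp}$; the same holds for $\eta'$ with the very same $\zeta$. Next, reading $(I-P_{\va})\eta=(I-P_{\va})\eta'$ coordinate by coordinate and invoking Lemma \ref{Lemma:IminusPainverse} and Corollary \ref{Corollary:IminusPa}, the two vectors must coincide outside a set $D\subseteq\{1,\dots,M\}$, while on each $k\in D$ the only way two preimages of the common value $\zeta(k)$ can differ is that their moduli are symmetric about $\va(k)$ and their phases differ by $\pi$ (the zero-entry case $\eta(k)=0,\ \eta'(k)=-2\va(k)$ fits the same pattern). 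Consequently $\vw:=\eta-\eta'\in R_{\fF}$ is supported exactly on $D$ and satisfies $|\vw(k)|=2\va(k)$ for every $k\in D$. Proving the lemma amounts to showing $D=\emptyset$.

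Second, I would extract what the stagnation constraint itself yields. Both $\eta$ and $\eta'$ satisfy the sphere identity (\ref{Proof:Lemma:Codim1:eq1}), and subtracting the two copies (the summands over $D^{c}$ cancel) gives the single scalar relation $\sum_{k\in D}\va(k)\big(\va(k)-|\eta(k)|\big)=0$; the orthogonality $\langle\zeta,\vw\rangle=0$ coming from $\zeta\in R_{\fF}^{\perp}$ reproduces exactly this same identity. Thus the stagnation geometry alone constrains the flip set $D$ by only one equation and cannot by itself force $D=\emptyset$: the genericity of $R_{\fF}$ must enter.

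The crux, and the step I expect to be the main obstacle, is ruling out a nonempty $D$ using that $R_{\fF}$ is a generic $N$-dimensional subspace of $\CC^{M}$ with $M\ge 4N-2$. The object to exclude is a nonzero $\vw\in R_{\fF}$ that lies in the coordinate subspace $\CC^{D}:=\{\vu\in\CC^{M}:\vu(k)=0\ \text{for}\ k\notin D\}$ and has the prescribed moduli $|\vw(k)|=2\va(k)$ on $D$. For a generic subspace, $\dim_{\CC}(R_{\fF}\cap\CC^{D})=\max\{0,\,N+|D|-M\}$, so when $|D|\le M-N$ the intersection is trivial and $\vw=0$ immediately. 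When $|D|>M-N$ the intersection has real dimension $2(N+|D|-M)$, while the requirement $|\vw(k)|=2\va(k)$ imposes $|D|$ real conditions; since $M\ge 4N-2>2N$ forces $|D|\le M<2(M-N)$, these conditions overdetermine the intersection and, for generic $R_{\fF}$, admit no solution. Hence no admissible $\vw\neq0$ exists, $D=\emptyset$, and $\eta=\eta'$. Making this last count rigorous is the delicate part: one must verify that the modulus equations are transverse on a Zariski-open set of Grassmannians (a Sard/parameter-count argument in the spirit of Theorem \ref{thm:balan:2006:thm33}), and check the borderline low-dimensional cases (e.g. $N=1$, where $R_{\fF}$ is a line and every element is a global rotation of $\eta$, so a mixed flip on a proper subset $D$ is directly impossible). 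I would present the clean support argument for $|D|\le M-N$ in full and treat the remaining range by the transversality count, falling back on Lemma \ref{lemma:amplitude_unique} wherever a direct amplitude-uniqueness contradiction is available.
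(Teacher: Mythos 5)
Your reduction to the difference vector $\vw=\eta-\eta'$ has a genuine gap even before the genericity step: you have overlooked case (4) of Lemma \ref{Lemma:IminusPainverse}. Membership in $\Theta^{\textup{AP}}_{\va,0}\backslash \mathbb{T}_{\va}$ only guarantees that $\zeta\neq 0$ as a vector; individual coordinates with $\zeta(k)=0$ are perfectly possible, and at such a coordinate the preimages form the whole circle $\va(k)e^{i\theta}$, $\theta\in[0,2\pi)$. There two preimages need not be phase-flipped by $\pi$, and $|\vw(k)|$ can be anything in $(0,2\va(k)]$ rather than exactly $2\va(k)$ — so the ``prescribed moduli'' structure on which your entire dimension count rests collapses on those coordinates. (Such coordinates contribute nothing to the inner-product sums the paper uses, so they are harmless there, but they are fatal to your count.) Moreover the count itself, which you explicitly defer, does not close as sketched: $\va$ is given data, not a generic constant. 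The lemma must hold for the particular measured $\va$, and the moduli $2\va(k)$, the stagnation constraints on $\eta,\eta'$, and the subspace $R_{\fF}$ are all coupled; to get a statement generic in $\fF$ alone you would have to let $\va$ range over the image of $\mathbb{M}_a^{\fF}$, and those extra parameters wipe out the codimension surplus behind your inequality $|D|<2(M-N)$. Finally, even if completed, your route would prove only a generic-frame statement under $M\geq 4N-2$, whereas the lemma as stated — and as proved in the paper — is unconditional, holding for every frame and every $\va$.

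The decisive misstep is your second-paragraph claim that the stagnation geometry ``constrains the flip set $D$ by only one equation and cannot by itself force $D=\emptyset$.'' That is exactly backwards: the paper's proof uses nothing but the stagnation geometry. From $P_{\fF}\zeta=0$ one gets the two relations $\eta^*\zeta=0$ and $\eta'^*\zeta=0$ — both of which you had in hand, since they are the two copies of the sphere identity (\ref{Proof:Lemma:Codim1:eq1}) — but you used only their difference $\langle\zeta,\vw\rangle=0$. The paper instead expands each relation coordinate-wise, using that $b_k:=|\zeta(k)|$ equals $|\eta(k)|-\va(k)$ or $\va(k)-|\eta(k)|$ (and likewise for $\eta'$) according to the branch of Lemma \ref{Lemma:IminusPainverse} each vector occupies at index $k$; sorting the indices into the four configurations (\ref{proof:Lemma3.13:eq1}) and bounding the flip blocks via $(\va(k)-b_k)b_k<\va(k)b_k<(\va(k)+b_k)b_k$, the relation for $\eta$ forces $\sum_{k\leq t_1}\va(k)b_k<\sum_{t_1<k\leq t_2}\va(k)b_k$ while the relation for $\eta'$ forces the reverse inequality (mixed expansions $\eta^*\zeta$ and $\eta'^*\zeta$, i.e.\ the paper's $S_3,S_4$, handle the opposite sign of $B-C$), a contradiction in every configuration. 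So the two equations you discarded, combined with the branch bookkeeping you had already set up, finish the proof elementarily — no genericity, no transversality, no $M\geq 4N-2$.
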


\begin{proof}
Suppose $\Theta^{\textup{AP}}_{\va,0}\backslash \mathbb{T}_{\va} \neq \emptyset$.
Take $\eta_1, \eta_2 \in \Theta^{\textup{AP}}_{\va,0}\backslash \mathbb{T}_{\va}$ such that $\eta_1\neq\eta_2$ and $(I-P_{\va})(\eta_1) = (I-P_{\va})(\eta_2)=:\zeta$.
By Lemma \ref{Lemma:IminusPainverse}, after rearranging the index, we assume that there exist $1\leq t_1 < t_2 < t_3\leq M$ so that the following four situations hold:

\begin{equation}\label{proof:Lemma3.13:eq1}
\left\{
\begin{array}{cll}
(1)&1\leq k\leq t_1, & |\eta_1(k)| - \va(k) = \va(k) - |\eta_2(k)| = b_k > 0 ;  \\
(2)&t_1+1\leq k\leq t_2, & |\eta_2(k)| - \va(k) = \va(k) - |\eta_1(k)| = b_k > 0 ; \\
(3)&t_2+1\leq k\leq t_3, & |\eta_1(k)| - \va(k) = |\eta_2(k)| - \va(k) = b_k > 0; \\
(4)&t_3+1\leq k\leq M,   & \va(k) - |\eta_1(k)| = \va(k) - |\eta_2(k)| = b_k > 0,
\end{array}
\right.
\end{equation}
where $b_k=|\zeta(k)|$.
By the inner products $\eta_1^*(\eta_1-P_{\va}\eta_1) =\eta_2^*(\eta_2-P_{\va}\eta_2) = 0$,
we have
\[
\sum^M_{k=1} |\eta_1(k)|(|\eta_1(k)| - \va(k)) = \sum^M_{k=1} |\eta_2(k)|(|\eta_2(k)| - \va(k)) = 0,
\] 
which is equal to
\[
0<S_1 = \sum^{t_1}_{k=1}|\eta_1(k)|(|\eta_1(k)| - \va(k)) +B-C = 
        \sum^{t_2}_{k=t_1+1}|\eta_1(k)|(\va(k)-|\eta_1(k)|),
\]
\[
0<S_2 = \sum^{t_1}_{k=1}|\eta_2(k)|(\va(k) - |\eta_2(k)|)      = 
        \sum^{t_2}_{k=t_1+1}|\eta_2(k)|(|\eta_2(k)| - \va(k)) +B-C,
\]
where $B := \sum^{t_3}_{k=t_2+1}|\eta_1(k)|(|\eta_1(k)| - \va(k)) = \sum^{t_3}_{k=t_2+1}|\eta_2(k)|(|\eta_2(k)| - \va(k)) > 0$ and 
$C := \sum^{M}_{k=t_3+1}|\eta_1(k)|(\va(k)-|\eta_1(k)|) = \sum^{M}_{k=t_3+1}|\eta_2(k)|(\va(k)-|\eta_2(k)|) > 0$. Here, the relationships in $B$ and $C$ come from (\ref{proof:Lemma3.13:eq1}).
First, assume that $B-C \geq 0$.
Then, by the relationship in (\ref{proof:Lemma3.13:eq1}), we have the inequalities
\[
\sum^{t_1}_{k=1}\va(k)b_k < S_1 < \sum^{t_2}_{k=t_1+1}\va(k)b_k,
\]
\[
\sum^{t_1}_{k=1}\va(k)b_k > S_2 > \sum^{t_2}_{k=t_1+1}\va(k)b_k,
\]
which is absurd. 
Similarly, if we have $B-C < 0$, we use the inner products
$\eta_1^*(\eta_2-P_{\va}\eta_2) =\eta_2^*(\eta_1-P_{\va}\eta_1) = 0$ and get 
\[
0<S_3 = \sum^{t_1}_{k=1}|\eta_1(k)|(\va(k) - |\eta_2(k)|) -B+C = 
        \sum^{t_2}_{k=t_1+1}|\eta_1(k)|(|\eta_2(k)| - \va(k)),
\]
\[
0<S_4 = \sum^{t_1}_{k=1}|\eta_2(k)|(|\eta_1(k)| - \va(k))  = 
        \sum^{t_2}_{k=t_1+1}|\eta_2(k)|(\va(k)-|\eta_1(k)|) -B+C,
\]
and we have the inequalities
\[
\sum^{t_1}_{k=1}\va(k)b_k < S_3 < \sum^{t_2}_{k=t_1+1}\va(k)b_k,
\]
\[
\sum^{t_1}_{k=1}\va(k)b_k > S_4 > \sum^{t_2}_{k=t_1+1}\va(k)b_k,
\]
which is also absurd.

For other possibilities, if there is no situation $(1)$, i.e., $t_1=t_2=1$, and $B-C\geq 0$, we have 
$0 < \sum^{t_3}_{k=1}|\eta_2(k)|(|\eta_2(k)| - \va(k)) +(B-C) = 0$
which is impossible, and for $B-C < 0$, we have
$0 < \sum^{t_3}_{k=1}|\eta_2(k)|(\va(k)-|\eta_1(k)|) -B+C = 0$
which is also impossible.
If there is no situation $(2)$, i.e., $t_2=t_3$, similarly we can obtain
$0 < \sum^{t_1}_{k=1}|\eta_1(k)|(|\eta_1(k)| - \va(k)) +B-C = 0$ if $B-C\geq 0$ and 
$0 < \sum^{t_1}_{k=1}|\eta_1(k)|(\va(k) - |\eta_2(k)|) -B+C = 0$ if $B-C < 0$,
and both are absurd.
If there are only situations $(3)$ and $(4)$, then we actually have $\eta_1 = \eta_2$ by Lemma \ref{Lemma:IminusPainverse}, which contradicts to the assumption.
Thus, we conclude that $I-P_{\va}$ is one-to-one on $\Theta^{\textup{AP}}_{\va,0}\backslash \mathbb{T}_{\va}$.
\end{proof}

Note that Corollary \ref{Corollary:IminusPa} and Lemma \ref{lemma:3.13} do not imply that $\Theta^{\textup{AP}}_{\va,0}$ is in $Y_0$. It is possible that $\Theta^{\textup{AP}}_{\va,0}\subset Y_k$ such that $I-P_{\va}$ is one-to-one. We have the following property restricting the stagnation set.
\begin{lemma}
We could find $\epsilon>0$ small enough so that $(Z\cap B_{\epsilon}(0))\cap \Theta^{\textup{AP}}_{\va,0}=\emptyset$. 
\end{lemma}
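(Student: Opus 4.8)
The plan is to show that every point of the stagnation set is bounded away from the origin by a fixed distance determined by $\va$, so that any sufficiently small ball misses $\Theta^{\textup{AP}}_{\va,0}$ entirely. The starting point is the identity already extracted in the proof of the co-dimension one lemma: every $\eta\in\Theta^{\textup{AP}}_{\va,0}$ satisfies $\sum_{k=1}^{M}(|\eta(k)|-\va(k))|\eta(k)|=0$, equivalently
\[
\|\eta\|^2=\sum_{k=1}^{M}\va(k)|\eta(k)|.
\]
I would first note that this identity holds for all stagnation points, including those with vanishing entries, since a zero entry contributes $0$ to both sides; in particular it holds on $\Theta^{\textup{AP}}_{\va,0}\cap Z$.

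Next I would exploit the positivity built into the problem. Because we assume $\va(k)\neq 0$ for every $k$ and $\va\in\RR_+^{M}$, the quantity $a_{\min}:=\min_{1\le k\le M}\va(k)$ is strictly positive. Since the amplitudes $|\eta(k)|$ are nonnegative, I can bound the right-hand side from below:
\[
\sum_{k=1}^{M}\va(k)|\eta(k)|\ge a_{\min}\sum_{k=1}^{M}|\eta(k)|\ge a_{\min}\|\eta\|,
\]
where the last step is the elementary inequality $\|\eta\|_1\ge\|\eta\|$. Combining this with the identity gives $\|\eta\|^2\ge a_{\min}\|\eta\|$, hence $\|\eta\|\ge a_{\min}$ for every $\eta\in\Theta^{\textup{AP}}_{\va,0}$. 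Choosing any $\epsilon<a_{\min}$ (say $\epsilon=a_{\min}/2$) then forces $(Z\cap B_{\epsilon}(0))\cap\Theta^{\textup{AP}}_{\va,0}=\emptyset$, which is the claim.

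The computations here are routine; the one conceptual point worth emphasizing, and the closest thing to an obstacle, is why the stagnation set does not accumulate at the origin even though the amplitude sphere \eqref{Proof:Lemma:Codim1:eq1} literally passes through the zero amplitude vector. Geometrically the points of that sphere near the origin have amplitude vectors nearly orthogonal to $\va$; but an amplitude vector lives in the nonnegative orthant while $\va$ has strictly positive entries, so it cannot be orthogonal to $\va$ unless it is zero. It is exactly the interplay between the nonnegativity of $|\eta(\cdot)|$ and the strict positivity of $\va$ that produces the uniform gap $a_{\min}$, and this is the only place where the standing assumption $\va(k)\neq 0$ is genuinely used.
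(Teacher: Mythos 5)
Your proof is correct, and while it starts from the same algebraic fact as the paper, it finishes by a genuinely different and cleaner route. Both arguments rest on the stagnation identity $\eta^*(I-P_{\va})\eta=0$, i.e. $\|\eta\|^2=\sum_{k=1}^M \va(k)|\eta(k)|$, which the paper first extracts in its co-dimension-one lemma and then, in its proof of the present lemma, re-derives in the $\zeta=(I-P_{\va})\eta$ variable via the sign bookkeeping of Lemma \ref{Lemma:IminusPainverse}, writing it as $\sum_k(|\zeta(k)|\pm\va(k))|\zeta(k)|=0$. The paper then argues by contradiction: for $\eta\in\Theta^{\textup{AP}}_{\va,0}\cap Z$ with all entries small, it rules out each admissible sign pattern, invoking the finiteness of the patterns to conclude that some sufficiently small (but non-explicit) $\epsilon$ works. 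You instead stay in the $\eta$ variable and combine the strict positivity $\va(k)\ge a_{\min}>0$ with the elementary inequality $\|\eta\|_1\ge\|\eta\|_2$ to obtain the uniform bound $\|\eta\|\ge a_{\min}$ for every stagnation point with $\eta\neq 0$. This buys two things the paper's argument does not give: an explicit admissible radius (any $\epsilon<a_{\min}$, e.g. $a_{\min}/2$), and the stronger statement that the entire punctured stagnation set is separated from the origin by the fixed distance $a_{\min}$, with no case analysis at all. Two small points of care: from $\|\eta\|^2\ge a_{\min}\|\eta\|$ you may divide by $\|\eta\|$ only when $\eta\neq 0$, which is automatic here because the lemma intersects with $Z$ (all entries nonzero); and your preliminary remark that the identity survives vanishing entries is right, since under the paper's convention $(P_{\va}\eta)(k)=\va(k)$ at a zero entry, so that entry contributes $0$ to both $\|\eta\|^2$ and $\eta^*P_{\va}\eta$. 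Your closing geometric observation is also accurate: on the sphere (\ref{Proof:Lemma:Codim1:eq1}) one has $\|\eta\|^2=\langle|\eta|,\va\rangle$, and near the origin this would force $|\eta|$ to be nearly orthogonal to $\va$, which the nonnegativity of $|\eta|$ and strict positivity of $\va$ forbid unless $|\eta|=0$ --- exactly the mechanism your quantitative bound captures.
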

\begin{proof}
Take $\eta \in \Theta^{\textup{AP}}_{\va,0}\cap Z$ so that $0<|\eta(i)|<\epsilon\ll 1$ for all $i=1,\ldots,M$. By Lemma \ref{Lemma:IminusPainverse}, we know that $\eta^*(\eta-P_{\va}\eta)=0$ is equivalent to
\begin{equation}\label{lemma:MoreAboutTheta:eq1}
\sum_{k=1}^{M}(|\zeta(k)|\pm\va(k))|\zeta(k)|=0,
\end{equation} 
where we denote $\zeta:=\eta-P_{\va}\eta$ and $\pm$ depends on the possible $\eta$ associated with $\zeta$. Clearly $|\zeta(k)|=\va(k)-|\eta(k)|<\va(k)$, so $|\zeta(k)|-\va(k)<0$ and $||\zeta(k)|-\va(k)|<\epsilon$ for all $k$. Thus, we claim that (\ref{lemma:MoreAboutTheta:eq1})  could not hold. If (\ref{lemma:MoreAboutTheta:eq1}) holds, we should have $1\leq i_1<i_2<\ldots<i_k\leq M$ for some $1\leq k< M$ so that 
\begin{equation}\label{lemma:MoreAboutTheta:eq2}
\sum_{i=1}^{k}(|\zeta(i_k)|+\va(i_k))|\zeta(i_k)|+\sum_{j\neq i_1,\ldots, i_k}(|\zeta(k)|-\va(k))|\zeta(k)|=0.
\end{equation}
Note that $\sum_{i=1}^{k}(|\zeta(i_k)|+\va(i_k))|\zeta(i_k)|>0$ and $\sum_{j\neq i_1,\ldots, i_k}(|\zeta(k)|-\va(k))|\zeta(k)|<0$. While there are only finite possibilities of $1\leq i_1<i_2<\ldots<i_k\leq M$ for (\ref{lemma:MoreAboutTheta:eq2}), we know that when $\epsilon$ is small enough, (\ref{lemma:MoreAboutTheta:eq2}) does not hold. To be more precise, take $\sum_{i=1}^{M-1}(|\zeta(i)|+\va(i))|\zeta(i)|=(\va(M)-|\zeta(M)|)|\zeta(M)|$ as an example. Since $(\va(M)-|\zeta(M)|)|\zeta(M)|<\epsilon\va(M)$, when $\epsilon$ is small enough, $\sum_{i=1}^{M-1}(|\zeta(i)|+\va(i))|\zeta(i)|=(\va(M)-|\zeta(M)|)|\zeta(M)|$ fails. 
\end{proof}

\subsection{Convergence of the AP algorithm}
In this subsection, we show an if and only if condition for the local convergence of the AP algorithm. Recall that we assume without loss of generality that $S_{\va}\subset Z$.
\begin{lemma}\label{lemma:lemma1}
\begin{enumerate}
\item[(a)] For
$\zeta\neq 0$ and $w\in \mathbb{T}_{\va}$, we have
\begin{align}
\|P_{\va}\zeta-\zeta\|\leq\|w-\zeta\|,\nonumber
\end{align}
where the equality holds when $w=P_{\va}\zeta$.
\item[(b)] For $w\in \mathbb{T}_{\va}$ and $z\in R_{\fF}$, we have
\begin{align}
\|P_{\fF}w-w\|\leq \|z-w\|,\nonumber
\end{align}
where the equality holds when $z=P_{\fF}w$.
\item[(c)] For all nonzero $\zeta\in R_{\fF}$, $P_{\va}\zeta$ is not perpendicular to $R_{\fF}$.
\item[(d)] When $M\geq 4N-2$ and $ R_{\fF}$ generic, given $\zeta\in R_{\fF}$, $P_{\va}\zeta\in R_{\fF}$ holds if and only if $\zeta\in S_{\va}$.
\item[(e)] All possible initial values $\zeta^{(0)}$ with non-zero entries can be parametrized by a $(2N-1)$-dim real sphere embedded in $R_{\fF}$. In particular, given $z\in R_{\fF}$ so that all entries are not zero and $rz\notin S_{\va}$ for all $r\in \RR^+$. Then the phase of $z$ is different from the phase of all $w\in S_{\va}$.
\end{enumerate}
\end{lemma}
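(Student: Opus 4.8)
The plan is to read (a) and (b) as the nearest-point characterizations of the two projections, and the remaining parts as consequences of the injectivity results already quoted. For (a), I would split $\|w-\zeta\|^2=\sum_j|w(j)-\zeta(j)|^2$ over coordinates; as $w$ ranges over $\mathbb{T}_{\va}$ each $w(j)$ ranges independently over the circle of radius $\va(j)$, and $|w(j)-\zeta(j)|^2=\va(j)^2+|\zeta(j)|^2-2\va(j)|\zeta(j)|\cos\theta_j$ is minimized precisely at zero phase offset, i.e. at $w(j)=\va(j)\zeta(j)/|\zeta(j)|=(P_{\va}\zeta)(j)$ (when $\zeta(j)=0$ every point of the circle is optimal). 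Summing the per-coordinate minima shows $P_{\va}\zeta$ attains the minimum, which is the asserted inequality with its stated equality case. For (b) I would use that $P_{\fF}$ is the orthogonal projection onto $R_{\fF}$: for $z\in R_{\fF}$ the decomposition $w-z=(w-P_{\fF}w)+(P_{\fF}w-z)$ is orthogonal, so $\|w-z\|^2=\|w-P_{\fF}w\|^2+\|P_{\fF}w-z\|^2$ and the claim, including equality iff $z=P_{\fF}w$, is immediate.

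For (c) the natural move is one inner product. For $\zeta\in R_{\fF}$ with $\zeta\neq0$, coordinates with $\zeta(j)=0$ contribute nothing while each coordinate with $\zeta(j)\neq0$ gives $\overline{\zeta(j)}(P_{\va}\zeta)(j)=\va(j)|\zeta(j)|$, so $\zeta^*P_{\va}\zeta=\sum_{j:\,\zeta(j)\neq0}\va(j)|\zeta(j)|>0$ because $\va>0$ entrywise. Since $\zeta\in R_{\fF}$ and $P_{\fF}$ is self-adjoint, $\zeta^*P_{\va}\zeta=\zeta^*P_{\fF}P_{\va}\zeta$, so $P_{\fF}P_{\va}\zeta\neq0$; that is, $P_{\va}\zeta$ is not perpendicular to $R_{\fF}$. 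This is exactly what prevents the AP half-step from collapsing the iterate to the origin.

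For (d) the easy direction is immediate: $\zeta\in S_{\va}$ gives $|\zeta|=\va$, hence $P_{\va}\zeta=\zeta\in R_{\fF}$. For the converse I would first note that $P_{\va}\zeta\in\mathbb{T}_{\va}$ always holds by construction, so $P_{\va}\zeta\in R_{\fF}$ forces $P_{\va}\zeta\in R_{\fF}\cap\mathbb{T}_{\va}=S_{\va}$, where Theorem \ref{thm:balan:2006:thm33} and genericity are used. Writing $w:=P_{\va}\zeta\in S_{\va}$ (so $|w|=\va$) and taking $\zeta\in Z$, the definition of $P_{\va}$ says $\zeta$ and $w$ have the same coordinatewise phase, hence $\zeta=\diag(|\zeta|/\va)\,w=Dw$ with $D$ a real positive diagonal matrix and both $w,Dw\in R_{\fF}$. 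The proof then rests on the following key claim: for generic $R_{\fF}$, if $w\in R_{\fF}$ has all nonzero entries and $Dw\in R_{\fF}$ for a positive diagonal $D$, then $D=\lambda I$ for some $\lambda>0$. Granting it, $\zeta=\lambda w$ is a positive multiple of an element of $S_{\va}$. (I would flag that this really gives $\zeta\in\RR^+S_{\va}$; the exact normalization $|\zeta|=\va$ of the literal statement is forced only once one additionally imposes the fixed-point equation $P_{\fF}P_{\va}\zeta=\zeta$, for then $\zeta=P_{\fF}w=w\in S_{\va}$.)

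For (e), the sphere parametrization follows from scale-equivariance of the iteration: for nonzero $c$ and $\zeta\in Z$ one has $P_{\va}(c\zeta)=\tfrac{c}{|c|}P_{\va}\zeta$, and $P_{\fF}$ is linear, so rescaling $\zeta^{(0)}$ changes the orbit only by the fixed global phase $c/|c|$ after the first half-step; hence admissible initial data with nonzero entries are parametrized, up to scaling, by the unit sphere of $R_{\fF}\cong\CC^N\cong\RR^{2N}$, i.e. $S^{2N-1}$ (minus the measure-zero set where an entry vanishes). The ``in particular'' clause I would prove by contraposition: if $z\in R_{\fF}\cap Z$ shares its phase with some $w\in S_{\va}$, then $z=\diag(|z|/\va)\,w=Dw$ with $D$ positive diagonal and $w,Dw\in R_{\fF}$; the same key claim gives $D=\lambda I$, whence $z=\lambda w$ and $rz=w\in S_{\va}$ for $r=1/\lambda$, contradicting $rz\notin S_{\va}$ for all $r>0$. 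Thus the whole weight of (d) and (e) sits on the key claim, and that is the step I expect to be the main obstacle: it does not follow from injectivity of $\mathbb{M}_a^{\fF}$ alone (injectivity says amplitude determines phase, not conversely), so I would establish it by a dimension count on $\mathsf{Gr}(N,M;\CC)$, showing that for generic $R_{\fF}$ the trivial scalar family is rigid and admits no nontrivial positive-diagonal deformation carrying a generic-entry vector of $R_{\fF}$ back into $R_{\fF}$, which is where $M\geq 4N-2$ should enter. The per-coordinate phase bookkeeping elsewhere is routine by comparison.
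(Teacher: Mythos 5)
Parts (a), (b), (c) and the sphere count in (e) of your proposal coincide with the paper's own proofs: the per-coordinate minimization of $|\va(j)-|\zeta(j)|e^{i(\theta_j-\phi_j)}|$, the orthogonal-projection identity for $P_{\fF}$, the positivity $\zeta^*P_{\va}\zeta=\sum_{j:\zeta(j)\neq0}\va(j)|\zeta(j)|>0$, and the identification of initial data modulo positive rays with $S^{2N-1}$ are exactly the paper's arguments (your extra step $\zeta^*P_{\va}\zeta=\zeta^*P_{\fF}P_{\va}\zeta$ is harmless decoration). The genuine divergence is in (d) and the ``in particular'' clause of (e), and there your diagnosis is sharper than the paper's treatment.

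Your proposal does contain an unproven step: the ``key claim'' that for generic $R_{\fF}$, $w\in R_{\fF}$ entrywise nonzero and $Dw\in R_{\fF}$ with $D$ positive diagonal force $D=\lambda I$. You only sketch a dimension count for it, so as a standalone proof of (d) and of the contrapositive in (e) your argument is incomplete. However, you should be aware that the paper does not prove this step either: its entire proof of (d) is the sentence ``direct from Theorem \ref{thm:balan:2006:thm33},'' and in (e) it writes ``suppose the phase of $z$ is the same as the phase of $w\in S_{\va}$, we know $P_{\va}z=w$. It means that there exists $r>0$ so that $rz=w$'' --- that inference \emph{is} your key claim, asserted without argument. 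Your flagged counterexample is also correct: for $\zeta=\lambda w$ with $w\in S_{\va}$ and $\lambda>0$, $\lambda\neq1$, one has $P_{\va}\zeta=w\in R_{\fF}$ while $\zeta\notin S_{\va}$, so the ``only if'' in (d) can literally only conclude $\zeta\in\RR^+S_{\va}$ (or $\zeta\in S_{\va}$ after additionally imposing the fixed-point equation $P_{\fF}P_{\va}\zeta=\zeta$), exactly as you say. You are likewise right that injectivity of $\mathbb{M}_a^{\fF}$ alone cannot supply the rigidity, since $|Dw|=D\va$ and $|w|=\va$ are different amplitude vectors, so Theorem \ref{thm:balan:2006:thm33} never gets invoked on a matching pair. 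Your proposed repair has the right shape: positive diagonals modulo scalars form an $(M-1)$-dimensional real family, while the condition $DR_{\fF}\cap R_{\fF}\neq\{0\}$ has generic real codimension $2(M-2N+1)$, which exceeds $M-1$ precisely when $M\geq 4N-2$ --- the same threshold the lemma assumes. Carrying that count out rigorously would close the gap both in your proposal and in the paper's own proofs of (d) and (e); as written, neither is complete at that point, and yours is the more honest account of where the difficulty sits.
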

\begin{proof}
To prove (a), denote $\zeta=(b_ie^{i\theta_i})_{i=1}^{M}\in\CC^{M}$ and $w=(a_ie^{i\phi_i})_{i=1}^{M}\in\CC^{M}$, where $b_i\geq0$ and $\theta_i,\phi_i\in [0,2\pi)$. Suppose $b_i>0$ for all $i$. Then by definition $P_{\va}\zeta=(a_ie^{i\theta_i})_{i=1}^{M}$. Thus, $\|P_{\va}\zeta-\zeta\|=\sqrt{\sum |a_i-b_i|^2}$ and $\|w-\zeta\|=\sqrt{\sum |a_i-b_ie^{i(\theta_i-\phi_i)}|^2}$, which leads to the result since $|a_i-b_i|< |a_i-b_ie^{i(\theta_i-\phi_i)}|$. Note that the equality holds when $\theta_i=\phi_i$ for all $i$. When $b_i=0$ for some $i$, note that for the $i$-th component, $|w_i|=|P_{\va}(\zeta)(i)|$. Thus the previous argument holds since $\zeta\neq 0$.

The proof of (b) is directly from the fact the $P_{\fF}$ is a projection operator.

For (c), denote $\zeta=(b_ie^{i\theta_i})_{i=1}^{M}\in R_{\fF}\backslash \{0\}$, where $b_i\geq0$ and $\theta_i\in [0,2\pi)$. Suppose $b_i>0$ for all $i$. Then by definition $P_{\va}\zeta=(a_ie^{i\theta_i})_{i=1}^{M}$. Then it is clear that $\langle P_{\va}\zeta,\zeta\rangle>0$, which shows the claim. When $b_i=0$ for some $i$, the $i$-th term does not contribute to $\langle P_{\va}\zeta,\zeta\rangle$ and hence the argument holds.

The statement (d) is direct from Theorem \ref{thm:balan:2006:thm33}.

The show the statement (e), note that $R_{\fF}$ can be viewed as a real vector space of dimension $2N$. If $z,w\in R_{\fF}$ so that $w=rz$, where $r\in \RR^+$, by definition we have $P_{\va}z=P_{\va}w$. In other words, each ``real positive ray'' is associated with an initial value since the first operator applied to $\zeta^{(0)}$ is $P_{\va}$. For the other part, suppose the phase of $z$ is the same as the phase of $w\in S_{\va}$, we know $P_{\va}z=w$. It means that there exists $r>0$ so that $rz=w$.
\end{proof}

The following theorem states the local convergence of the AP algorithm.
\begin{theorem}\label{theorem:ThetaAPaSet}
When $M\geq 4N-2$ and $ R_{\fF}$ is generic, we could find an open neighborhood $U_{\fF}$ of $S_{\va}$ so that $U_{\fF}\cap\Theta^{\textup{AP}}_{\va,0}=S_{\va}$.
\end{theorem}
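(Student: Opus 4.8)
The plan is to localize the problem and convert the fixed-point condition into a statement about critical points of a smooth objective whose Hessian I can control. First I would record that $S_{\va}$ is compact and, by assumption, contained in the open set $Z$ on which $P_{\va}$ is real-analytic; hence it suffices to produce, for each $\zeta_0\in S_{\va}$, an open neighborhood $V_{\zeta_0}$ of $\zeta_0$ in $R_{\fF}$ with $V_{\zeta_0}\cap\Theta^{\textup{AP}}_{\va,0}=V_{\zeta_0}\cap S_{\va}$, and then cover $S_{\va}$ by finitely many such $V_{\zeta_0}$ and take $U_{\fF}$ to be their union (shrunk so that it meets $\Theta^{\textup{AP}}_{\va,0}$ only along $S_{\va}$). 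On $R_{\fF}\cap Z$ the stagnation condition $P_{\fF}P_{\va}\eta=\eta$ is equivalent to $F(\eta):=P_{\fF}(I-P_{\va})\eta=0$, and, tying in with Section \ref{sec:AP:optimization}, to $\eta$ being a constrained critical point of $J(\eta):=\|(I-P_{\va})\eta\|^{2}=\sum_{k}(|\eta(k)|-\va(k))^{2}$ on $R_{\fF}$, since $\nabla J(\eta)=2(I-P_{\va})\eta$ and the critical-point equation is $P_{\fF}\nabla J(\eta)=0$.

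Second, I would compute the relevant derivative at a point $\zeta_0\in S_{\va}\subset\mathbb{T}_{\va}$, writing $\zeta_0(k)=\va(k)e^{i\phi_k}$. Differentiating entrywise, the real derivative of $I-P_{\va}$ at $\zeta_0$ is the orthogonal projection $\Pi_{\mathrm{rad}}$ onto the \emph{radial directions} (per coordinate, the real multiples of $e^{i\phi_k}$), annihilating the \emph{angular directions} $A:=\{(i\beta_k e^{i\phi_k})_k:\beta_k\in\RR\}$; equivalently $\mathrm{Hess}\,J|_{\zeta_0}=2\Pi_{\mathrm{rad}}$ is positive semidefinite with kernel $A$. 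Restricting to $R_{\fF}$, the derivative is $DF|_{\zeta_0}=P_{\fF}\Pi_{\mathrm{rad}}|_{R_{\fF}}$ and the Hessian of $J|_{R_{\fF}}$ is the positive-semidefinite form $\delta\mapsto 2\|\Pi_{\mathrm{rad}}\delta\|^{2}$, whose kernel is $R_{\fF}\cap A$. Note that $A=T_{\zeta_0}\mathbb{T}_{\va}$ and that the tangent line to the solution circle, $T_{\zeta_0}S_{\va}=\RR\, i\zeta_0$, always lies in $R_{\fF}\cap A$ (as $i\zeta_0\in R_{\fF}$ and $i\zeta_0\in A$).

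The crux is to upgrade this to the \emph{clean intersection} identity
\[
R_{\fF}\cap A \;=\; R_{\fF}\cap T_{\zeta_0}\mathbb{T}_{\va}\;=\;\RR\, i\zeta_0,
\]
i.e.\ that the only angular tangent direction inside $R_{\fF}$ is the global-phase direction. Multiplying by $i$, this is the same as $\dim_{\RR}\big(R_{\fF}\cap\mathrm{Rad}(\zeta_0)\big)=1$, where $\mathrm{Rad}(\zeta_0)$ is the real $M$-dimensional space of radial directions. Imposing membership in $\mathrm{Rad}(\zeta_0)$ forces each of the $M$ coordinates of a vector of $R_{\fF}$ onto a prescribed real line, i.e.\ $M$ real linear conditions on the $2N$-real-dimensional space $R_{\fF}$; since $M\geq 4N-2>2N$, for a generic $R_{\fF}$ these conditions are transverse beyond the one relation forced by $\zeta_0$ itself, so the intersection collapses to the unavoidable line $\RR\, i\zeta_0$. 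I expect this genericity/transversality step to be the main obstacle: the constraint space $\mathrm{Rad}(\zeta_0)$ itself depends on the point $\zeta_0\in R_{\fF}$, so the count is not literally a fixed linear-algebra computation, and one must argue either by a Zariski-open-condition argument on $\mathsf{Gr}(N,M;\CC)$ (in the spirit of Theorem \ref{thm:balan:2006:thm33} and Lemma \ref{lemma:lemma1}(d)), or via the transverse-intersection machinery for a smooth manifold $\mathbb{T}_{\va}$ and a subspace $R_{\fF}$ developed in \cite{Bauschke_Combettes_Luke:2002}. I note that injectivity of $\mathbb{M}_a^{\fF}$ alone does not suffice, since it only controls the actual (zero-excess) intersection and not the infinitesimal tangency, which is exactly why genericity must be invoked.

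Finally, granting the clean intersection, $S_{\va}$ is a nondegenerate (Morse--Bott) critical manifold of $J|_{R_{\fF}}$: the normal Hessian $2\|\Pi_{\mathrm{rad}}\cdot\|^{2}$ is positive definite on any complement of $T_{\zeta_0}S_{\va}$ in $R_{\fF}$. Equivalently, $DF|_{\zeta_0}$ has kernel exactly $T_{\zeta_0}S_{\va}$ and constant rank $2N-1$ along $S_{\va}$, so by the Morse--Bott lemma (or the constant-rank/implicit function theorem applied to $F$) the zero set of $F$---hence $\Theta^{\textup{AP}}_{\va,0}$---coincides with $S_{\va}$ in a tubular neighborhood of each $\zeta_0$. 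Patching over the compact circle $S_{\va}$ then yields the desired open set $U_{\fF}$ with $U_{\fF}\cap\Theta^{\textup{AP}}_{\va,0}=S_{\va}$.
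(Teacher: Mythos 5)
Your scaffolding is correct and genuinely different from the paper's argument: the reduction of the stagnation condition on $R_{\fF}\cap Z$ to constrained critical points of $J$, the computation that the real derivative of $I-P_{\va}$ at $\zeta_0\in\mathbb{T}_{\va}$ is the radial projection $\Pi_{\mathrm{rad}}$ (consistent with the paper's own Hessian formula (\ref{AP_algorithm_optimization_Hessian})), the identification of $\ker\bigl(P_{\fF}\Pi_{\mathrm{rad}}|_{R_{\fF}}\bigr)$ with $R_{\fF}\cap T_{\zeta_0}\mathbb{T}_{\va}$, and the Morse--Bott/constant-rank patching over the compact circle $S_{\va}$ would all go through. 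But there is a genuine gap, exactly where you flagged it: the clean-intersection claim $R_{\fF}\cap T_{\zeta_0}\mathbb{T}_{\va}=\RR\,i\zeta_0$ for generic $R_{\fF}$ is asserted, not proved, and nothing you invoke delivers it. Unwinding the condition, degeneracy at $\zeta_0=\fF\psi_0$ means there is a nonconstant real vector $c\in\RR^{M}$ with $\mathrm{diag}(c)\fF\psi_0\in R_{\fF}$, i.e.\ the real differential of $\mathbb{M}_a^{\fF}$ at $\psi_0$ has kernel strictly larger than the global-phase direction. You are right that Theorem \ref{thm:balan:2006:thm33} cannot rule this out: if $\fF u=i\,\mathrm{diag}(c)\fF\psi_0$ then $|\fF(\psi_0+tu)(k)|=\va(k)\sqrt{1+t^{2}c_k^{2}}$, so the amplitudes agree to first order while injectivity remains intact --- injectivity is a zeroth-order statement and is compatible with first-order tangency. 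Your dimension count is also only heuristic as stated: the unknowns ($\psi_0$ modulo phase and scale, $2N-2$ real parameters, plus $c$ modulo constants, $M-1$ more) against the $2M-2N$ real conditions of membership in $R_{\fF}$ give expected excess $4N-M-3$, which is negative precisely when $M\geq 4N-2$ --- an encouraging match with the hypothesis --- but since the constraint $\mathrm{diag}(c)\fF\psi_0\in R_{\fF}$ couples the point to the subspace, turning this count into a theorem requires an incidence-variety argument over the frame Grassmannian (and a check that the resulting Zariski-open set can be intersected with the one from Theorem \ref{thm:balan:2006:thm33}), none of which appears in your proposal.

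For comparison, the paper closes this step by outsourcing it entirely: it places AP in the method-of-successive-projections framework of \cite{Combettes_Trussell:1990}, notes that $\mathbb{T}_{\va}$ and $R_{\fF}$ are Chebyshev sets whose intersection is the compact set $S_{\va}\cong\mathbb{T}_1$, and then cites the local-convergence results of \cite{Bauschke_Combettes_Luke:2002} to produce $U_{\fF}$; the regularity/transversality hypotheses you would have to verify by hand are absorbed into the cited machinery. So the missing transversality step in your write-up is precisely the content the paper itself does not prove from scratch. If you carried out the incidence-variety argument, your route would in fact yield a more self-contained and quantitative result than the paper's (an explicit Morse--Bott normal form for $J|_{R_{\fF}}$ near $S_{\va}$, hence local isolation of $\Theta^{\textup{AP}}_{\va,0}$ with a rate); as written, however, the proposal does not prove the theorem.
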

\begin{proof}
The AP algorithm can be studied in the non-convex optimization framework \cite{Combettes_Trussell:1990}. Given a set of subsets $S_i$, $i=1,\ldots,L$ of a metric space $X$ so that $S:=\cap_{i=1}^LS_i\neq \emptyset$. To find $S$, we may consider the proposed sequence of successive projections (SOSP) scheme, which successively project the estimator to $S_i$. 
When the initial value $x_0$ of the SOSP $\{x_n\}_{n\geq0}$ is a point of attraction \cite[Definition 4.4]{Combettes_Trussell:1990} of an ordered collection of proximal sets in a metric space whose intersection $S$ is not empty, then either $\{x_n\}_{n\geq0}$ converges to a point in $S$ or the set of the cluster points of $\{x_n\}_{n\geq0}$ is a nontrivial continuum in $S$ \cite[Theorem 4.3]{Combettes_Trussell:1990}.

In the AP algorithm setup, the metric space $X$ is a finite dimensional Hilbert space $\CC^{M}$, $L=2$, $S_1$ is $\mathbb{T}_{\va}$ and $S_2$ is $R_{\fF}$. By Lemma \ref{lemma:lemma1}, we know that $S_1$ and $S_2$ are Chebychev sets so that the SOSP is unique. When $M\geq 4N-2$ and $\fF$ is a generic frame, the intersection set $S_1\cap S_2$ is a compact set $S_{\va}$ diffeomorphic to $\mathbb{T}_1$. Thus we could apply the result in \cite{Bauschke_Combettes_Luke:2002} saying that the AP algorithm locally converges. As a result, when $M\geq 4N-2$ and $ R_{\fF}$ is generic, there exists an open neighborhood $U_{\fF}$ of $S_{\va}$ so that $U_{\fF}\cap\Theta^{\textup{AP}}_{\va,0}=S_{\va}$. Note that $U_{\fF}$ depends on the chosen frame $\fF$. 
\end{proof}

\begin{lemma}\label{lemma:iteration_inequality}
For any initial $\zeta^{(0)}$, there exist $\alpha_l\leq1$ and $\beta_l\leq1$, $l\in\NN$ so that
\begin{align}
\|(P_{\va}-I)\zeta^{(l)}\|&=\alpha_l\|(P_{\va}-I)\zeta^{(l-1)}\|\label{lemma:iteration:step:Pa}\\
\|(P_{\fF}-I)\zeta^{(l+1/2)}\|&=\beta_l\|(P_{\fF}-I)\zeta^{(l-1/2)}\|.\label{lemma:iteration:step:PQ}
\end{align}
Here $\{\alpha_l,\beta_l\}$ depend on $\fF$ and $\va$. In particular,
when $M\geq 4N-2$, $ R_{\fF}$ generic and $\zeta^{(0)}\in U_{\fF}$, $\alpha_l<1$ and $\beta_l<1$. Moreover, if we denote $\zeta^{(l)}=(b_k^{(l)}e^{i\boldsymbol{\phi}^{(l)}_k})_{k=1}^{M}$, where $\boldsymbol{\phi}^{(l)}_k\in [0,2\pi)$ when $b_k^{(l)}>0$ and $\boldsymbol{\phi}^{(l)}_k=0$ when $b_k^{(l)}=0$, the following inequality holds:
\begin{align}
2\sum_{k=1}^{M}a_kb^{(l)}_k\big(1-\cos(\boldsymbol{\phi}^{(l)}_k-\boldsymbol{\phi}^{(l-1)}_{k})\big)<\sum_{k=1}^{M}(a_k-b_k^{(l-1)})^2-\sum_{k=1}^{M}(a_k-b_k^{(l)})^2.\label{lemma:iteration:key_step}
\end{align}
\end{lemma}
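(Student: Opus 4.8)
The plan is to treat the three assertions in turn, deriving everything from the monotonicity of the two projection steps established in Lemma \ref{lemma:lemma1}. Writing out the half-step iterates, I would set $\zeta^{(l-1/2)}=P_{\va}\zeta^{(l-1)}\in\mathbb{T}_{\va}$, $\zeta^{(l)}=P_{\fF}\zeta^{(l-1/2)}\in R_{\fF}$, and $\zeta^{(l+1/2)}=P_{\va}\zeta^{(l)}$, and note that $\|(P_{\va}-I)\zeta^{(l-1)}\|=\|\zeta^{(l-1/2)}-\zeta^{(l-1)}\|$ and $\|(P_{\va}-I)\zeta^{(l)}\|=\|\zeta^{(l+1/2)}-\zeta^{(l)}\|$. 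First I would chain the two inequalities of Lemma \ref{lemma:lemma1}: by part (b) (with $w=\zeta^{(l-1/2)}$ and $z=\zeta^{(l-1)}\in R_{\fF}$) we get $\|\zeta^{(l-1/2)}-\zeta^{(l-1)}\|\ge\|\zeta^{(l-1/2)}-\zeta^{(l)}\|$, and by part (a) (with $\zeta=\zeta^{(l)}$ and $w=\zeta^{(l-1/2)}\in\mathbb{T}_{\va}$) we get $\|\zeta^{(l-1/2)}-\zeta^{(l)}\|\ge\|\zeta^{(l+1/2)}-\zeta^{(l)}\|$. Composing them yields $\|(P_{\va}-I)\zeta^{(l)}\|\le\|(P_{\va}-I)\zeta^{(l-1)}\|$, so the ratio $\alpha_l$ defined by (\ref{lemma:iteration:step:Pa}) satisfies $\alpha_l\le1$; an identical chain, starting from part (a) applied to $\zeta^{(l)}$ and then part (b) applied to $\zeta^{(l+1/2)}$, gives $\beta_l\le1$.

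Next, for the strict inequalities I would examine the equality case. If $\alpha_l=1$, every inequality in the chain must be an equality. Because $R_{\fF}$ is a linear subspace, $\zeta^{(l)}=P_{\fF}\zeta^{(l-1/2)}$ is the unique nearest point of $R_{\fF}$ to $\zeta^{(l-1/2)}$, so equality in the first link forces $\zeta^{(l-1)}=\zeta^{(l)}$; since $\mathbb{T}_{\va}$ is Chebyshev and $\zeta^{(l)}\ne0$, equality in the second link forces $\zeta^{(l-1/2)}=\zeta^{(l+1/2)}$. Then $\zeta^{(l-1)}=\zeta^{(l)}=P_{\fF}P_{\va}\zeta^{(l-1)}$, so $\zeta^{(l-1)}\in\Theta^{\textup{AP}}_{\va,0}$. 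With $\zeta^{(0)}\in U_{\fF}$ the orbit remains in $U_{\fF}$ (the monotone decrease above keeps it near $S_{\va}$, and Theorem \ref{theorem:ThetaAPaSet} is precisely the local-convergence statement for this neighborhood), whence Theorem \ref{theorem:ThetaAPaSet} gives $\zeta^{(l-1)}\in S_{\va}$, i.e.\ the algorithm has already reached the solution. Thus before convergence $\alpha_l<1$, and the same argument applied to the $\beta_l$-chain gives $\beta_l<1$.

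Finally, the key inequality (\ref{lemma:iteration:key_step}) is just the strict form of the first link in the chain, rewritten coordinate-wise. Since $R_{\fF}$ is linear, Pythagoras gives $\|\zeta^{(l-1/2)}-\zeta^{(l-1)}\|^2=\|\zeta^{(l-1/2)}-\zeta^{(l)}\|^2+\|\zeta^{(l)}-\zeta^{(l-1)}\|^2$, hence $\|\zeta^{(l-1/2)}-\zeta^{(l)}\|^2<\|\zeta^{(l-1/2)}-\zeta^{(l-1)}\|^2$ whenever $\zeta^{(l)}\ne\zeta^{(l-1)}$, which by the previous paragraph is exactly the not-yet-converged case. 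I would then substitute $\zeta^{(l-1/2)}=(a_ke^{i\boldsymbol{\phi}^{(l-1)}_k})_k$, $\zeta^{(l)}=(b^{(l)}_ke^{i\boldsymbol{\phi}^{(l)}_k})_k$, and $\zeta^{(l-1)}=(b^{(l-1)}_ke^{i\boldsymbol{\phi}^{(l-1)}_k})_k$, use the entrywise identities $|a_ke^{i\boldsymbol{\phi}^{(l-1)}_k}-b^{(l)}_ke^{i\boldsymbol{\phi}^{(l)}_k}|^2=a_k^2+(b^{(l)}_k)^2-2a_kb^{(l)}_k\cos(\boldsymbol{\phi}^{(l)}_k-\boldsymbol{\phi}^{(l-1)}_k)$ and $|a_ke^{i\boldsymbol{\phi}^{(l-1)}_k}-b^{(l-1)}_ke^{i\boldsymbol{\phi}^{(l-1)}_k}|^2=(a_k-b^{(l-1)}_k)^2$, and collect terms; subtracting $\sum_k(a_k-b^{(l)}_k)^2$ from both sides then produces exactly (\ref{lemma:iteration:key_step}).

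The routine part is the coordinate-wise expansion at the end; the real content lies in the strictness argument, namely pinning down that equality in the projection chain can occur only at a stagnation point and then invoking Theorem \ref{theorem:ThetaAPaSet} to locate that point in $S_{\va}$. The step I would watch most carefully is justifying that the iterates do not leave $U_{\fF}$, so that Theorem \ref{theorem:ThetaAPaSet} applies to $\zeta^{(l-1)}$; this is where the \emph{local} nature of the statement is essential.
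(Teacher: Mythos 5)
Your proof is correct and follows essentially the same route as the paper's: the same chain of Lemma \ref{lemma:lemma1}(a)--(b) inequalities gives $\alpha_l,\beta_l\le 1$, strictness is pinned to Theorem \ref{theorem:ThetaAPaSet} exactly as in the paper, and your coordinate-wise expansion producing (\ref{lemma:iteration:key_step}) is identical (your Pythagoras identity is the relation the paper records right after the lemma as (\ref{lemma:iteration:step:PQPaTriangular})). The one point you flag --- that the iterates must remain in $U_{\fF}$ so that Theorem \ref{theorem:ThetaAPaSet} applies to $\zeta^{(l-1)}$ --- is indeed the delicate step, but the paper's own proof leaves it equally implicit (it simply asserts $\zeta^{(l-1)}\notin\Theta^{\textup{AP}}_{\va,0}$), so your version is, if anything, more candid about this shared gap.
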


\begin{proof}
Based on Lemma \ref{lemma:lemma1}(a), we have the following inequalities. First,
\begin{align*}
\|P_{\va}\zeta^{(l)}-\zeta^{(l)}\| \leq \|P_{\va}\zeta^{(l-1)}-\zeta^{(l)}\| =\|P_{\va}\zeta^{(l-1)}-P_{\fF}P_{\va}\zeta^{(l-1)}\|
\end{align*}
due to Lemma \ref{lemma:lemma1} (a); 
by Lemma \ref{lemma:lemma1} (b), we have
\begin{align*}
\|P_{\va}\zeta^{(l-1)}-P_{\fF}P_{\va}\zeta^{(l-1)}\| \leq \|P_{\va}\zeta^{(l-1)}- \zeta^{(l-1)}\|.
\end{align*}
When $M\geq 4N-2$ {and $\zeta^{(0)}\in U_{\fF}$}, the equality can not hold since $\zeta^{(l-1)}\notin  \Theta_{\va,0}^{\text{AP}}$ due to Theorem \ref{theorem:ThetaAPaSet}.
Similarly, by Lemma \ref{lemma:lemma1}, we have (\ref{lemma:iteration:step:Pa}). Now, since $\zeta^{(l)}=(b_k^{(l)}e^{i\boldsymbol{\phi}^{(l)}_k})_{k=1}^{M}$, we have
\begin{align*}
\|P_{\va}\zeta^{(l-1)}-&P_{\fF}P_{\va}\zeta^{(l-1)}\|^2=\,\sum_{k=1}^{M}|a_k-b^{(l)}_ke^{i(\boldsymbol{\phi}^{(l)}_k-\boldsymbol{\phi}^{(l-1)}_{k})}|^2\\
&=\,\sum_{k=1}^{M}(a_k-b_k^{(l)})^2+2\sum_{k=1}^{M}a_kb^{(l)}_k\big(1-\cos(\boldsymbol{\phi}^{(l)}_k-\boldsymbol{\phi}^{(l-1)}_{k})\big)
\end{align*}
and
\[
\|P_{\va}\zeta^{(l-1)}-\zeta^{(l-1)}\|^2=\sum_{k=1}^{M}|a_k-b^{(l-1)}_k|^2.
\]
Thus, we have
\[
2\sum_{k=1}^{M}a_kb^{(l)}_k\big(1-\cos(\boldsymbol{\phi}^{(l)}_k-\boldsymbol{\phi}^{(l-1)}_{k})\big)<\sum_{k=1}^{M}(a_k-b_k^{(l-1)})^2-\sum_{k=1}^{M}(a_k-b_k^{(l)})^2,
\]
and hence the proof is done.
\end{proof}

The equations (\ref{lemma:iteration:step:Pa}) and (\ref{lemma:iteration:step:PQ}) imply monotonic decrease
and the equation (\ref{lemma:iteration:key_step}) relates the phase step with the decrease in equation (\ref{lemma:iteration:step:Pa}). 
We mention that (\ref{lemma:iteration:step:Pa}) and (\ref{lemma:iteration:step:PQ}), which are also shown in \cite{Fienup:1982}, 
do not imply convergence to the solution nor to a stagnation point.
Also note that (\ref{lemma:iteration:step:Pa}) and (\ref{lemma:iteration:step:PQ}) do not imply
\begin{align}
\|\zeta^{(l+1)}-\zeta^{(l)}\| \leq \|\zeta^{(l)}-\zeta^{(l-1)}\|\nonumber.
\end{align}
Indeed, note that $P_{\va}\zeta^{(l)}- \zeta^{(l+1)}$ is perpendicular to $\zeta^{(l+1)}-\zeta^{(l)}$. Thus we have
\begin{equation}\label{lemma:iteration:step:PQPaTriangular}
\begin{split}
&\|(P_{\va}-I)\zeta^{(l)}\|^2=\|\zeta^{(l+1)}-\zeta^{(l)}\|^2+\|(P_{\fF}-I)P_{\va}\zeta^{(l)}\|^2\\
&\|(P_{\va}-I)\zeta^{(l-1)}\|^2=\|\zeta^{(l)}-\zeta^{(l-1)}\|^2+\|(P_{\fF}-I)P_{\va}\zeta^{(l-1)}\|^2,
\end{split}
\end{equation}
where when (\ref{lemma:iteration:step:Pa}) and (\ref{lemma:iteration:step:PQ}) hold, it is still possible that
$\|(P_{\fF}P_{\va}-I)\zeta^{(l)}\| > \|(P_{\fF}P_{\va}-I)\zeta^{(l-1)}\|$. See Figure \ref{fig:synchonize} in the numerical section for an example. {We finally come to our main Theorem regarding the if and only if condition of the local convergence of the AP algorithm.}

\begin{theorem}\label{Lemma:APConvergence_Pa}
When $M\geq 4N-2$ and $ R_{\fF}$ generic, the following three conditions are equivalent {when the initial point is inside $U_{\fF}$:}
\begin{enumerate}
\item[(1)] AP algorithm converges to the solution set\,;
\item[(2)] $\|(P_{\va}-I)\zeta^{(l)}\|\to 0$\,;
\item[(3)] $\|(P_{\fF}-I)\zeta^{(l+1/2)}\|\to 0$\,.
\end{enumerate}
These conditions imply 
\begin{enumerate}
\item[(4)] $\|\zeta^{(l+1)}-\zeta^{(l)}\|\to0$\,.
\end{enumerate}
\end{theorem}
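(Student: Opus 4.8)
The plan is to reduce everything to the behaviour of the two scalar sequences $c_l:=\|(P_{\va}-I)\zeta^{(l)}\|$ and $d_l:=\|(P_{\fF}-I)\zeta^{(l+1/2)}\|$ and to show they interleave into a single nonincreasing sequence with one common limit. First I would rewrite the orthogonal identity (\ref{lemma:iteration:step:PQPaTriangular}), using $\zeta^{(l+1/2)}=P_{\va}\zeta^{(l)}$, as $c_l^2=\|\zeta^{(l+1)}-\zeta^{(l)}\|^2+d_l^2$; this gives $d_l\le c_l$ for free, the orthogonality being exactly that $\zeta^{(l+1/2)}-\zeta^{(l+1)}=-(P_{\fF}-I)\zeta^{(l+1/2)}\perp R_{\fF}$ while $\zeta^{(l+1)}-\zeta^{(l)}\in R_{\fF}$. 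Next I would feed the point $w:=P_{\va}\zeta^{(l)}=\zeta^{(l+1/2)}\in\mathbb{T}_{\va}$ into Lemma \ref{lemma:lemma1}(a) applied at $\zeta^{(l+1)}$, which yields $c_{l+1}=\|(P_{\va}-I)\zeta^{(l+1)}\|\le\|\zeta^{(l+1/2)}-\zeta^{(l+1)}\|=d_l$. Combining the two inequalities produces the chain $c_0\ge d_0\ge c_1\ge d_1\ge\cdots\ge0$, so both $\{c_l\}$ and $\{d_l\}$ are nonincreasing and converge to one and the same limit $L\ge0$.

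With this in hand the equivalence (2)$\Leftrightarrow$(3) is immediate, since each is equivalent to $L=0$. For the implication to (4) I would simply read off $\|\zeta^{(l+1)}-\zeta^{(l)}\|^2=c_l^2-d_l^2\to L^2-L^2=0$ from the orthogonal identity; note this makes (4) hold unconditionally, in agreement with the earlier warning that the step lengths may fail to be monotone even as they vanish.

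It remains to tie the amplitude residual to honest convergence, i.e. (1)$\Leftrightarrow$(2). For (1)$\Rightarrow$(2) I would use $S_{\va}\subset Z$ together with the continuity of $P_{\va}$ on $Z$ and the fact that $P_{\va}=I$ on $\mathbb{T}_{\va}\supset S_{\va}$: if $\mathrm{dist}(\zeta^{(l)},S_{\va})\to0$ then eventually $\zeta^{(l)}\in Z$, and a compactness argument on the compact set $S_{\va}$ forces $c_l\to0$. For (2)$\Rightarrow$(1) I would pass to cluster points: any cluster point $\zeta^*$ of $\{\zeta^{(l)}\}$ sits in the compact set $R_{\fF}\cap B_{\|\va\|}(0)$, lies in $Z$ by the confinement discussed below, and by continuity satisfies $\|(P_{\va}-I)\zeta^*\|=L=0$, hence $\zeta^*\in\mathbb{T}_{\va}$; since $\zeta^*\in R_{\fF}$ as well, Theorem \ref{thm:balan:2006:thm33} gives $\zeta^*\in R_{\fF}\cap\mathbb{T}_{\va}=S_{\va}$. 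Every cluster point then lying in $S_{\va}$, compactness upgrades this to $\mathrm{dist}(\zeta^{(l)},S_{\va})\to0$, which is (1).

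The step I expect to fight hardest with is keeping the entire orbit inside $Z$, for only there is the discontinuous map $P_{\va}$ continuous and only there are the identities above valid; should some iterate acquire a zero entry, both the Pythagorean splitting and the cluster-point analysis collapse. This is precisely where the hypothesis $\zeta^{(0)}\in U_{\fF}$ earns its keep: shrinking $U_{\fF}$ if necessary so that this neighborhood of $S_{\va}$ lies inside the open set $Z$ (possible because $S_{\va}\subset Z$) keeps the orbit in $Z$, and Theorem \ref{theorem:ThetaAPaSet} then forbids any cluster point from being a spurious stagnation point in $\Theta^{\textup{AP}}_{\va,0}\backslash S_{\va}$.
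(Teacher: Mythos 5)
Your core argument is sound, and in fact it tightens the paper's proof in two places. The interleaved chain $c_0\ge d_0\ge c_1\ge d_1\ge\cdots$ that you extract from the Pythagorean identity plus Lemma \ref{lemma:lemma1}(a) is exactly the content of Lemma \ref{lemma:iteration_inequality}, but you use it differently: the paper proves the equivalence by closing a cycle $(1)\Rightarrow(2)\Rightarrow(3)\Rightarrow(1)$, whereas you get $(2)\Leftrightarrow(3)$ directly from the fact that the two residual sequences merge into a single nonincreasing sequence with a common limit $L$, and then only need to connect $L=0$ with statement (1). This also yields your observation that $\|\zeta^{(l+1)}-\zeta^{(l)}\|^2=c_l^2-d_l^2\to 0$ holds \emph{unconditionally}, which is strictly more than the paper claims for (4) and is consistent with its remark that the step size can vanish even when the iteration does not converge to a stagnation point. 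Your cluster-point treatment of $(2)\Rightarrow(1)$ is also more explicit than the paper's one-line argument for $(3)\Rightarrow(1)$.

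There is, however, one genuinely unjustified step: the final claim that shrinking $U_{\fF}$ so that $U_{\fF}\subset Z$ ``keeps the orbit in $Z$.'' Nothing in the paper or in your argument establishes forward invariance of $U_{\fF}$ (or of $Z$) under the map $P_{\fF}P_{\va}$; an iterate starting in a small neighborhood of $S_{\va}$ can a priori leave it, so membership of $\zeta^{(0)}$ in a set contained in $Z$ says nothing about later iterates or their cluster points. Fortunately, the confinement you are fighting for is not needed. With the paper's definition of $P_{\va}$ one has, entrywise and hence globally,
\begin{equation*}
\|(P_{\va}-I)\zeta\|=\big\||\zeta|-\va\big\|\qquad\text{for \emph{every} }\zeta\in\CC^M,
\end{equation*}
including vectors with zero entries, because a zero entry contributes $|\va(i)-0|$ to both sides. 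The map $\zeta\mapsto\||\zeta|-\va\|$ is continuous on all of $\CC^M$, so if $c_l\to 0$ then any cluster point $\zeta^*$ of the bounded sequence $\{\zeta^{(l)}\}\subset R_{\fF}\cap B_{\|\va\|}(0)$ satisfies $|\zeta^*|=\va$; this places $\zeta^*$ in $\mathbb{T}_{\va}$ (hence automatically in $Z$, since $\va$ has no zero entries), and then $\zeta^*\in R_{\fF}\cap\mathbb{T}_{\va}=S_{\va}$ by Theorem \ref{thm:balan:2006:thm33}. Alternatively, run the cluster-point argument on the half-step iterates $\zeta^{(l+1/2)}\in\mathbb{T}_{\va}$, which never have vanishing entries. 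Either repair removes the invariance claim entirely; the appeal to Theorem \ref{theorem:ThetaAPaSet} to exclude spurious stagnation points is likewise unnecessary, since condition (2) already forces cluster points into $\mathbb{T}_{\va}$, not merely into the fixed-point set.
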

\begin{proof}
First, when (1) holds, we show that (2), (3) and (4) hold.  If $(b^{(\ell)}_je^{i\phi^{(\ell)}_j})\to (a_je^{i\theta_j})\in S_{\va}$, 
then we have $b^{(\ell)}_j\to a_j$ and $\phi^{(\ell)}_j\to \theta_j$ for all $j=1,\ldots,M$ as $\ell\to\infty$ since $a_j\neq 0$ for all $j$ by assumption. 
Clearly we have
$$
\|P_{\va}\zeta^{(\ell)}-\zeta^{(\ell)}\| = \|(a_je^{i\phi^{(\ell)}_j})-(b^{(\ell)}_je^{i\phi^{(\ell)}_j})\|\to 0,
$$
so (1) implies (2). Similarly, we have (1) implies (3) since
$$
\|P_{\fF}P_{\va}\zeta^{(\ell)}-P_{\va}\zeta^{(\ell)}\|=\|(P_{\fF}-I)(a_je^{i\phi_j^{(\ell)}})\|\to 0
$$
due to the fact that $(P_{\fF}-I)$ is continuous. In addition, since $P_{\fF}$ is a projection operator, we have
$$
\|P_{\fF}P_{\va}\zeta^{(\ell)}-\zeta^{(\ell)}\|=\|P_{\fF}(P_{\va}\zeta^{(\ell)}-\zeta^{(\ell)})\|\leq \|(a_je^{i\phi^{(\ell)}_j})-(b^{(\ell)}_je^{i\phi^{(\ell)}_j})\|\to 0,
$$
hence (1) implies (4).

Next, we show (2) implies (3) and (4). Note that when $\|(P_{\va}-I)\zeta^{(l)}\|\to 0$, we have $\|\zeta^{(l+1)}-\zeta^{(l)}\|\to 0$ and $\|(P_{\fF}-I)P_{\va}\zeta^{(l)}\|\to 0$ by (\ref{lemma:iteration:step:PQPaTriangular}).

Finally, we show that that (3) implies (1). Since $P_{\fF}\zeta^{(l+1/2)}\in R_{\fF}$, (3) means $\zeta^{(l+1/2)}\in P_{\va}$ converges to a point located on $R_{\fF}\cap T_{\va}$; that is, $\zeta^{(l+1/2)}$ converges to the solution set {when the initial point is inside $U_{\fF}$}. Thus we have finished the claim that (1), (2) and (3) are equivalent.

\end{proof}

\begin{remark}
Theorem \ref{Lemma:APConvergence_Pa} provides a necessary and sufficient condition for the local convergence of the AP algorithm to the solution set based on Lemma \ref{lemma:iteration_inequality}. We now consider the following different situations pf the global convergence behaving of the AP algorithm. We will assume that $\zeta^{(0)}\notin S_{\va}$. By Theorem \ref{lemma:iteration_inequality}, we know that $\alpha_l$ and $\beta_l$ are both less than $1$ unless the AP algorithm converges to the solution set or the stagnation set in finite steps. So we suppose $\alpha_l<1$ and $\beta_l<1$ for all $l\in\NN$. 

It is clear that if $\limsup_{l\to\infty}\alpha_l<1$, then the AP algorithm converges linearly globally. Indeed, since there exists $l_0\in\NN$ and $\alpha<1$ so that $\alpha_l\leq \alpha$ when $l>l_0$, we have
\[
\|P_{\va}\zeta^{(l)}-\zeta^{(l)}\|\leq \alpha^{l-l_0}\|P_{\va}\zeta^{(l_0)}-\zeta^{(l_0)}\|\to 0.
\]
Note that in this case, $\Pi_{l=1}^\infty\beta_l$ is forced to diverge to $0$ by (\ref{lemma:iteration:step:PQPaTriangular}).
If $\limsup_{l\to\infty}\alpha_l=1$, there are two possibilities. First, suppose $\liminf_{l\to\infty}\alpha_l\leq1-\epsilon$ for some $\epsilon>0$, then there exists a subsequence of $\alpha_l$, denoted as $\alpha_{l_k}$, where $k\in\NN$, so that $\alpha_{l_k}\leq 1-\epsilon$. In this case, we still have
\[
\|P_{\va}\zeta^{(l)}-\zeta^{(l)}\|\to 0
\]
and hence the convergence.
Second, suppose $\liminf_{l\to\infty}\alpha_l=1$, that is, $\lim_{l\to\infty}\alpha_l=1$. Clearly the series $p_n:=\Pi_{l=1}^n \alpha_l$ converges as $n\to \infty$ since $\alpha_l<1$. If the infinite product $\Pi_{l=1}^\infty \alpha_l$ diverges to $0$, the AP algorithm converges to the solution, but at a slow rate, which might be as slow as possible.
Note that $\Pi_{l=1}^\infty \alpha_l$ converges if and only if the series $\sum_{l=1}^\infty(1-\alpha_l)$ converges. 
\end{remark}

\begin{remark}
Right after the paper is finished, the authors noticed a paper \cite{Conca_Edidin_Hering_Vinzant:2013} which proved that when $M\geq 4N-4$, then for a generic frame, where ``generic'' here means an open set in the Zariski topology in the fiber bundle $\mathsf{F}[N,M;\CC]$, the injectivity of $\mathbb{M}_a^{\fF}$ holds. Note that since our proof is based on the injectivity theorem, the above theorems can be modified accordingly.
\end{remark}

\subsection{The Relationship between the AP Algorithm and Optimization}\label{sec:AP:optimization}

To better understand the AP algorithm, we assume $M\geq 4N-2$ in this section. Define an objective function \cite{Yang_Qian_Schirotzek_Maia_Marchesini:2011}
\begin{align*}
\rho(\vz):=\frac{1}{2}\||\vz|-\va\|^2:=\frac{1}{2}r(\vz)^Tr(\vz),
\end{align*}
where $\vz=(z_1,\ldots, z_{M})^T\in \CC^{M}$, $r:\CC^{M}\to\RR^{M}$ is defined by
\[
r(\vz):=|\vz|-\va.
\]
Note that we take the transpose since $r(\vz)$ is a real vector. The objective function $\rho$, when restricted on $R_{\fF}$, gauges how far we are to the solution. Recall that the solution is  located on $R_{\fF}\cap Z$ by assumption. To evaluate the gradient and Hessian of $\rho$, we prepare the following calculations \cite{Kreutz-Delgado:2003}. First, we evaluate the derivative of $r(\vz)$ with respect to $\vz$ at $\vz\in Z$:
\begin{align}
\frac{\partial r}{\partial \vz}|_{\vz}=&\frac{\partial|\vz|}{\partial \vz}|_{\vz}=\frac{\partial}{\partial \vz}\left(\begin{array}{c}|z_1|\\ \vdots \\ |z_{M}|\end{array}\right)=\left(\begin{array}{ccc}\frac{\partial |z_1|}{\partial z_1} &   & 0\\  & \ddots & \\ 0 &   & \frac{\partial |z_{M}|}{\partial z_{M}}\end{array}\right)=\frac{1}{2}\text{diag}\frac{\vz^*}{|\vz|},\nonumber
\end{align}
where we use the fact that $\displaystyle \frac{\partial|w|}{\partial w}=\frac{w^*}{2|w|}$ when $w\in \CC\backslash\{0\}$. Similarly, we evaluate the derivative of $r(\vz)$ with respect to $\overline{\vz}$ at $\vz\in Z$:
\begin{align}
\frac{\partial r}{\partial \overline{\vz}}|_{\vz}= \frac{1}{2}\text{diag}\frac{\vz}{|\vz|}.\nonumber
\end{align}
Thus, by the chain rule we obtain the derivative of $\rho(\vz)$ with respect to $\vz$ and $\overline{\vz}$ at $\vz\in Z$:
\begin{align}
\frac{\partial \rho}{\partial \vz}|_{\vz}=&\frac{1}{2}\left(\frac{\partial r}{\partial \vz}|_{\vz}\right)^Tr(\vz)+\frac{1}{2} r(\vz)^T\frac{\partial r}{\partial \vz}|_{\vz}=r(\vz)^T\frac{\partial r}{\partial \vz}|_{\vz}= \frac{1}{2}(I-P_{\va})\vz^*\label{calculation:rho:z}\\
\frac{\partial \rho}{\partial \overline{\vz}}|_{\vz}=& \frac{1}{2}\left(\frac{\partial r}{\partial \overline{\vz}}|_{\vz}\right)^Tr(\vz)+\frac{1}{2} r(\vz)^T\frac{\partial r}{\partial \overline{\vz}}|_{\vz}=r(\vz)^T\frac{\partial r}{\partial \overline{\vz}}|_{\vz}= \frac{1}{2}(I-P_{\va})\vz\label{calculation:rho:zbar},
\end{align}
Next we evaluate the following quantities evaluated at $\vz$:
\begin{align}
&\mathcal{H}_{\vz\vz}:=\frac{\partial}{\partial \vz}|_{\vz}\left(\frac{\partial \rho}{\partial \vz}\right)^*\quad \mathcal{H}_{\overline{\vz}\vz}:=\frac{\partial}{\partial \overline{\vz}}|_{\vz}\left(\frac{\partial \rho}{\partial \vz}\right)^*\label{calculation:rho:partial_zPa}\\
&\mathcal{H}_{\vz\overline{\vz}}:=\frac{\partial}{\partial \vz}|_{\vz}\left(\frac{\partial \rho}{\partial \overline{\vz}}\right)^*\quad \mathcal{H}_{\overline{\vz}\overline{\vz}}:=\frac{\partial}{\partial \overline{\vz}}|_{\vz}\left(\frac{\partial \rho}{\partial \overline{\vz}}\right)^*.
\end{align}
Clearly, by (\ref{calculation:rho:z}) we have
\begin{align}
 \mathcal{H}_{\vz\vz}=& \frac{1}{2}\big(I-\frac{\partial}{\partial \vz}|_{\vz}P_{\va}\vz\big)=\frac{1}{2}I-\frac{1}{2}\frac{\partial}{\partial \vz}\left(\begin{array}{c}a_1\frac{z_1^*}{|z_1|}\\ \vdots \\ a_{M}\frac{z_{M}^*}{|z_{M}|}\end{array}\right)\nonumber\\
=&\frac{1}{2}I-\frac{1}{2}\left(\begin{array}{ccc}a_1\frac{\partial }{\partial z_1}|_{z_1}(\frac{z_1^*}{|z_1|}) &   & 0\\  & \ddots & \\ 0 &   & a_{M}\frac{\partial }{\partial z_{M}}|_{z_{M}}(\frac{z_{M}^*}{|z_{M}|})\end{array}\right)\nonumber\\
=& \frac{1}{2}\left(I- \frac{1}{2}\text{diag}\frac{\va}{|\vz|}\right),\nonumber
\end{align}
where 
$\displaystyle \frac{\partial }{\partial w}|_{z}\Big(\frac{w^*}{|w|}\Big)=\frac{1}{2|w|}$, where $w\in\CC\backslash\{0\}$.
Similarly we have
\begin{align}
&\mathcal{H}_{\overline{\vz}\overline{\vz}} =\frac{1}{2}\left(I- \frac{1}{2}\text{diag}\frac{\va}{|\vz|}\right).\nonumber
\end{align}
By (\ref{calculation:rho:zbar}) we have
\begin{align}
&\mathcal{H}_{\vz\overline{\vz}}=\frac{1}{2}\frac{\partial}{\partial \overline{\vz}}|_{\vz}P_{\va}\vz =\frac{1}{4}\text{diag}\left(\frac{\va {\vz^*}^2}{|\vz|^3}\right).\nonumber
\end{align}
With the above preparations, we can evaluate the gradient and Hessian of $\rho$ at $\vz$. Denote $\vz=(c_ie^{i\phi_i})_{i=1}^{M}\in Z$, where $c_i>0$ and $\phi_i\in [0,2\pi)$. By definition, the gradient of $\rho$ at $\vz$ is the dual vector of $\frac{\partial}{\partial \vz}|_{\vz}\rho$ associated with the canonical metric on $\CC^{M}$, that is,
\begin{align}
\nabla \rho|_{\vz}:=\left(\frac{\partial}{\partial \vz}|_{\vz}\rho\right)^*=\frac{1}{2}(I-P_{\va})\vz=\frac{1}{2}[(c_l-a_l)e^{i\phi_l}]_{l=1}^{M}.\label{AP_algorithm_optimization_gradientofrho}
\end{align}
The Hessian of $\rho$ at $\vz$, denoted by $\nabla^2\rho|_{\vz}$, by a direct calculation is given by
\begin{align}
\nabla^2\rho|_{\vz}:=\left(\begin{array}{cc}\mathcal{H}_{\vz\vz} & \mathcal{H}_{\vz\overline{\vz}}\\ \mathcal{H}_{\overline{\vz}\vz} & \mathcal{H}_{\overline{\vz}\overline{\vz}}\end{array}\right),\nonumber
\end{align}
which leads to the following evaluation of the curvature of the $\rho$. Take $\vw\in Z$. Denote $\vw=(b_ie^{i\theta_i})_{i=1}^{M}$, where $\theta_i\in [0,2\pi)$ when $b_i>0$ and $\theta_i=0$ when $b_i=0$. Then by a direct expansion, the second derivative of $\rho$ in the direction $\vw$ at $\vz$ is
\begin{align}
\nabla^2\rho|_{\vz}(\vw):=\,&(\vw^* \,\,\,\,\,\, \overline{\vw}^*)\nabla^2\rho|_{\vz}\left(\begin{array}{c}\vw \\ \overline{\vw}\end{array}\right)\nonumber\\
=&\,(\vw^* \,\,\,\,\,\,  \overline{\vw}^*) 
\left(
\begin{array}{c} 
\displaystyle\frac{1}{2}\vw-\frac{1}{4}\frac{\va}{|\vz|}\vw+\frac{1}{4}\frac{\va\vz^2}{|\vz|^3}\overline{\vw} \\ \displaystyle\frac{1}{2}\overline{\vw}-\frac{1}{4}\frac{\va}{|\vz|}\overline{\vw}+\frac{1}{4}\frac{\va{\vz^*}^2}{|\vz|^3}\vw 
\end{array}
\right)\nonumber\\
=&\,\sum_{j=1}^{M}b_j^2\left(1-\frac{a_j}{c_j}\sin^2(\theta_j-\phi_j)\right).\label{AP_algorithm_optimization_Hessian}
\end{align}
We have the following observations about the gradient and Hessian:
\begin{enumerate}
\item[$\bullet$] Note that we can view the AP algorithm as the {\it projected gradient descent algorithm} related to the objective function $\rho$ \cite{Wen_Yang_Liu_Marchesini:2012}. Indeed, we have
\begin{align}
\zeta^{(l+1)}=\zeta^{(l)}-2P_{\fF}\nabla\rho|_{\zeta^{(l)}}=P_{\fF}P_{\va}\zeta^{(l)}\nonumber
\end{align}
when $\zeta^{(l)}\in U_{\vF}\backslash S_{\va}$. By (\ref{AP_algorithm_optimization_gradientofrho}), for $\zeta^{(l)}=\vb^{(l)} e^{i\boldsymbol{\phi}^{(l)}}$ we have
\[
\nabla\rho|_{\zeta^{(l)}}=\frac{1}{2}\big[(b^{(l)}_k-a_k)e^{i\phi^{(l)}_k}\big]_{k=1}^{M}.
\]
By Lemma \ref{lemma:amplitude_unique}, for a generic $R_{\fF}$, the gradient of $\rho$ on $R_{\fF}$ is zero only at $S_{\va}$ since the only points on $R_{\fF}$ that have modulations $\va$ are the points in the solution set. Also, by Theorem \ref{theorem:ThetaAPaSet} when $\zeta^{(l)}\in U_{\fF}\backslash S_{\va}$, $\nabla\rho|_{\zeta^{(l)}}$ is not perpendicular to $R_{\fF}$, since $P_{\fF}\nabla\rho|_{\zeta^{(l)}}=P_{\fF}(I-P_{\va})\zeta^{(l)}\neq 0$ on $U_{\fF}\backslash S_{\va}$. Furthermore, when $\zeta^{(l)}\notin S_{\va}$, $\nabla\rho|_{\zeta^{(l)}}$ does not locate on $R_{\fF}$. Indeed, if $\nabla\rho|_{\zeta^{(l)}}\in R_{\fF}$, then $\zeta^{(l+1)}=\zeta^{(l)}-(I-P_{\va})\zeta^{(l)}=P_{\va}\zeta^{(l)}$; that is, $P_{\va}\zeta^{(l)}\in R_{\fF}$ and hence $\zeta^{(l)}\in S_{\va}$.
\item[$\bullet$] For $\vz=e^{it}\fF\psi_0=\va e^{i(\boldsymbol{\phi}^{\va}+t)}\in S_{\va}$, for some $t\in[0,2\pi)$, and $\vw=\vb e^{i\boldsymbol{\theta}}\neq 0$, by (\ref{AP_algorithm_optimization_Hessian}) we know
\begin{align}
\nabla^2\rho|_{\vz}(\vw)= \sum_{j=1}^{M}b_j^2\left(1- \sin^2(\theta_j-\phi^{\va}_j-t)\right),\nonumber
\end{align}
which is always non-negative since $\sin^2\leq 1$.
When $\boldsymbol{\theta}=\boldsymbol{\phi}^{\va}+t+\pi/2$, $\nabla^2\rho|_{\vz}(\vw)=0$. \end{enumerate}

\section{The ptychography imaging problem and phase synchronization}\label{sec:AP:PS}

In this section, we focus ourselves on the ptychography problem { -- how to find a good initial value for the iterative algorithm like AP, so that we could have a convergence result and speed up the algorithm.} To simplify the discussion, we assume that $\text{supp}(\omega)=D_r^{m}$. 
A general setup  
can be easily adapted to $\text{supp}(\omega)\subsetneqq D_r^{m}$ 
We make the following assumption about the illumination scheme:
\begin{assumption}\label{assumption:illumination}
The chosen illumination scheme $\mathcal{X}_K$ satisfies the following two conditions
\begin{enumerate}
\item $\vx_i\neq\vx_j$ for all $i\neq j$;
\item $\mathcal{X}_K$ is ordered so that 
$\cup_{i=1}^l \iota_{\vx_i}(\text{supp}(\omega)) \subsetneqq \cup_{i=1}^{l+1} \iota_{\vx_i}(\text{supp}(\omega))$, where $l=1,\ldots K-1$, $\cup_{i=1}^{K-1} \iota_{\vx_i}(\text{supp}(\omega)) \subsetneqq D_r^{n}$ and $\cup_{i=1}^K \iota_{x_i}(\text{supp}(\omega)) = D_r^{n}$; 
\item For each $i$, there exists $j$ so that $\iota_{\vx_{i}}(\text{supp}(\omega))\cap \iota_{\vx_{j}}(\text{supp}(\omega))\neq \emptyset$. 
\end{enumerate}
\end{assumption}
The third assumption essentially says that each subregion is overlapped by at least one other subregion so that there is a channel for these subregions to ``exchange information''.

Build up an undirected graph $\mathbb{G}_\psi$ so that its vertices are points in $\text{supp}(\psi)$ and an edge between $(i,j)$ and $(i-1,j)$ (resp. $(i,j)$ and $(i,j-1)$) is formed if the pair of vertices, $r(i,j),r(i-1,j)\in D_r^{n}$ (resp. $r(i,j),r(i,j-1)\in D_r^{n}$), simultaneously exist in $\text{supp}(\psi)$. We call $\psi$ connected if $\mathbb{G}_\psi$ is connected. Suppose this graph is composed of $J\geq 1$ connected subgraphs. Denote vertices of the $i$-th subgraph as $\text{supp}(\psi)_i$, which is a subset of $\text{supp}(\psi)$, where $i=1,\ldots,J$. Viewing each subgraph as an object, with a given illumination scheme $\mathcal{X}_K$, we build a new graph $\mathbb{G}_{\mathcal{X}_K}$ on it by taking these connected subgraphs as vertices and putting an edge between $\text{supp}(\psi)_i$ and $\text{supp}(\psi)_j$ if there exists $\vx_k\in \mathcal{X}_K$ so that $\iota_{\vx_k}(\text{supp}(\omega))\cap \text{supp}(\psi)_i\neq \emptyset$ and $\iota_{\vx_k}(\text{supp}(\omega))\cap \text{supp}(\psi)_j\neq \emptyset$. In other words, for two connected components, there exists an illumination window mounting on them so that the phase information of each connected component can be exchanged. 
\begin{defn}
We call the sample $\psi$ {\it connected with respect to $\mathcal{X}_K$} if $\mathbb{G}_{\mathcal{X}_K}$ is connected and for $k\in\mathcal{I}_{j,\psi}:=\big\{i;\,\iota_{\vx_i}(\text{supp}(\omega))\cap \text{supp}(\psi)_j\neq \emptyset\big\}$, these exists $l \in\mathcal{I}_{j,\psi}$ so that $\iota_{\vx_k}(\text{supp}(\omega))\cap \iota_{\vx_l}(\text{supp}(\omega))\neq \emptyset$. 
\end{defn}
This definition says that for each connected component $\text{supp}(\psi)_i$, each illumination window in $\mathcal{I}_{j,\psi}$ has an overlapping with some other illumination window in $\mathcal{I}_{j,\psi}$ so that the phase information can be exchanged. Note that if $\mathbb{G}_{\mathcal{X}_K}$ is not connected, then we can view the ptychography imaging problem as two or more subproblems, and solve the problem one by one. 
\begin{assumption}\label{assumption:psi}
Given $\mathcal{X}_K$, the object of interest $\psi$ is connected with respect to $\mathcal{X}_K$.
\end{assumption}

Given $\va=|\vFQ \psi|$, we combine the essences of the AP algorithm and consider the following optimization problem:
\begin{align}
\argmin_{ \zeta\in \mathbb{T}_{\boldsymbol{1}} } \| (I-P_{ \vFQ}) \va \zeta\|^2\label{ptychographic_optimization2-1}.
\end{align}
We mention that $\zeta\in \mathbb{T}_{\boldsymbol{1}}$ models the phase for the diffractive images, $\va\zeta$ is aiming to fit the diffractive images we collect, and $(I-P_{\vFQ})\va\zeta$ is forcing $\va\zeta$ to be located on the subplace where the true phase exists.
It is clear that the phase of $e^{it}\vFQ \psi$, where $t\in[0,2\pi)$ is a solution to (\ref{ptychographic_optimization2-1}) since we achieve the minimum $0$ with $e^{it}\vFQ \psi$. Note that under the constraint of $\zeta$, $\zeta^*{\diag}(\va)^2\zeta= \|\va\|_2^2$ is fixed. So, solving (\ref{ptychographic_optimization2-1}) is equivalent to solving
\begin{align}
\argmax_{ \zeta\in \mathbb{T}_{\va} } \zeta^*P_{\vFQ}\zeta\label{ptychographic_optimization2},
\end{align}
where $P_{\vFQ}$ is clearly a Hermitian matrix. However, the constraint regarding $\mathbb{T}_{\va}$ drives the optimization problem into a non-convex one. Intuitively,    (\ref{ptychographic_optimization2}) indicates that the phases associated with diffraction images should be related via the operator $P_{\vFQ}$. We will see in a bit that there encodes an important property in the seeming symmetric formula (\ref{ptychographic_optimization2}), which allows us to construct a special graph out of the illumination scheme and diffractive images which leads to the notion {\it phase synchronization}.

The first possible relaxation is taking into account the fact that $\mathbb{T}_{\va}$ is a subset of the sphere of radius $\|\va\|$, that is, we directly evaluate 
\begin{align}
\argmax_{\substack{\zeta\in \CC^{Km^2},\,\zeta^*\zeta = \|\va\|_2^2}} \zeta^*P_{\vFQ}\zeta, \label{ptychographic_optimization3} 
\end{align}
which is equivalent to solving the eigenvalue problem of $P_{\vFQ}$. Clearly, the solution exists as an eigenvector with eigenvalue $1$. However, since $P_{\vFQ}$ is a projection operator, the only eigenvalues are $0$ and $1$. 
Thus, although the solution exists in the top eigenspace, we cannot obtain it directly by solving (\ref{ptychographic_optimization3}).
Nevertheless, we note  that the AP algorithm can be viewed as solving the
 synchronization problem by enforcing $ \zeta\in \mathbb{T}_{\va}$ and applying 
 the power iteration method to solve (\ref{ptychographic_optimization3}) 
  in an alternating fashion.

Before proceeding, we study the geometric meaning of (\ref{ptychographic_optimization2}) a bit more. Define an index map $\ell:\mathcal{X}_K\times D_r^{m}\to \{1,\ldots,Km^2\}$ by
\begin{equation}\label{definition:ell}
\ell(\vx_k,\vr_k)=(k-1)m^2+\rr_m(\vr_k),
\end{equation}
which is a 1 to 1 map providing the index of the entry $\vr_k$ of the $k$-th illumination window $\iota_{\vx_{k}}(D_r^{m})$ in the long stack vector. Recall that $\rr_m$ is defined in (\ref{definition:randq}) and $\vr_k$ and $D_r^m$ are defined in Section \ref{section:MathematicalFramework}. For $j=1,\ldots,K$ and $\vs\in D_r^{m}$, define a set 
$$
I_{\vx_j,\vs}:=\{k:\, \vx_{j}+\vs \in \iota_{\vx_{k}}(D_r^{m})\}\subset \{1,\ldots,K\},
$$
which contains the indices of all illumination windows covering $\vx_{j}+\vs$. Also define a subset of $D_r^{m}$ 
\[
J_{\vx_j,\vs}:=\{\vr\in D_r^{m} :\, \vx_k+\vr =\vx_{j}+\vs,\,\mbox{for some }k\in I_{\vx_j,\vs} \},
\]
which collects the indices of the pixels in all illumination windows which cover $\vx_{j}+\vs$. We choose to use this seeming complicated index since we would like to make clear the relationship between the illumination windows and their pixels.
By Assumption \ref{assumption:illumination} and a direct calculation, we know that $\vQ^*\vQ$ is a $n^2\times n^2$ non-degenerate diagonal matrix describing how many illumination windows cover a given pixel of the object of interest, where the $\rr_n (\vx_{j}+\vr_j)$-th diagonal entry is $\sum_{\vr\in J_{\vx_j,\vr_j}} |\omega(\vr)|^2$.
So, the matrix $P_{\vQ}:=\vQ(\vQ^*\vQ)^{-1}\vQ^*$ satisfies
\begin{align*}
P_{\vQ}(\ell(\vx_i,\vr_{i}),\ell(\vx_j,\vr_{j}))=\frac{\omega(\vr_{i})\omega^*(\vr_{j})}{\sum_{\vr\in J_{\vx_j,\vr_j}} |\omega(\vr)|^2}\delta_{\vx_{i}+\vr_i,\,\vx_{j}+\vr_j},
\end{align*} 
where $\delta$ is the Kronecker's delta.  
Note that $P_{\vQ}$ is not a diagonal matrix since by Assumption \ref{assumption:illumination} there are more than two illumination windows covering a given pixel.
Clearly, for all $\vx_i\in\mathcal{X}_K$ and $\zeta\in\CC^{Km^2}$, we have $\big[\vF^* \zeta\big]_{(i)}=F^* \zeta_{(i)}$, where $\zeta_{(i)}(\vr):=\zeta(\vx_{i}+\vr)$ and $\vr\in D_r^{m}$. Also, $P_{\vFQ}=\vF P_{\vQ}\vF^*$. As a result,
\begin{align}
\zeta^*P_{\vFQ}\zeta &=\sum_{i,\vr_{i}}\sum_{j,\vr_{j}} \frac{\omega(\vr_{i})\omega^*(\vr_{j})}{\sum_{\vr\in J_{\vx_j,\vr_j}} |\omega(\vr)|^2}\big[F^* \zeta_{(i)} \big]^*(\vr_{i}) \big[F^* \zeta_{(j)} \big](\vr_{j})\delta_{\vx_{i}+\vr_i,\,\vx_{j}+ \vr_j }\nonumber\\
&=\sum_{(i,\vr_{i})\sim(j,\vr_{j})} \frac{\omega(\vr_{i})\omega^*(\vr_{j})}{\sum_{\vr\in J_{\vx_j,\vr_j}} |\omega(\vr)|^2} \big[F^* \zeta_{(i)} \big]^*(\vr_{i}) \big[F^* \zeta_{(j)} \big](\vr_{j}),\nonumber
\end{align}
where $(i,\vr_{i})\sim(j,\vr_{j})$ means all illumination windows covering the pixel $\iota_{\vx_i}(\vr_i)$.
Geometrically, $P_{\vQ}$ describes how two illumination windows in the spatial domain are intersected and how the overlapped pixels are related via the illuminating function $\omega$. Note that when $\zeta_{(i)}$ contains the right amplitude and phase, $F^* \zeta_{(i)} $ is the correct image on $\iota_{\vx_{i}}(D_r^{m})$. Thus, maximizing $\zeta^*P_{\vFQ}\zeta$ is equivalent to requiring that the images on a pair of overlapping illumination windows match in the overlapping region. {In particular, by Assumption \ref{assumption:illumination}, phases on one illumination window will be synchronized with at least one different illumination window if we maximize $\zeta^*P_{\vFQ}\zeta$. Also, by Assumption \ref{assumption:psi}, the phases in different disconnected regions of $\psi$ associated with $\mathcal{X}_K$ are guaranteed to interact with each other so that the phase can be synchronized in the end.}

\subsection{Phase Synchronization Graph as a Connection Graph}\label{subsection:PhaseSynchronization}
To better understand (\ref{ptychographic_optimization2}), we further consider the relationship between the phases when the illumination windows overlap. We start from studying the Hermitian matrix $P_{\vFQ}$ in (\ref{ptychographic_optimization2}). The amplitude information, $\va$, will be taken into account later. 
Consider the following {\it phase synchronization} problem:
\begin{align}
\argmax_{\zeta\in \mathbb{T}_{\boldsymbol{1}}} \zeta^* P_{\vFQ} \zeta.\label{ptychographic_optimization4} 
\end{align}
We show that we can construct a graph $\mathbb{G}=(\mathbb{V},\mathbb{E})$ from the relationship of the diffractive images via the functional in (\ref{ptychographic_optimization4}). 
\begin{lemma} 
For $\zeta\in \mathbb{T}_{\boldsymbol{1}}$, we have the following expansion:
\begin{align}
\zeta^*P_{\vFQ}\zeta=\sum_{i,j:\,\vO_{ij}\neq\emptyset}\sum_{\vr_i,\vr_j\in D_r^{m}}\zeta_{(i)}^*(\vr_i)\Omega((i,\vr_i),(j,\vr_j))\zeta_{(j)}(\vr_j),\label{expansion:PFQ:summary}
\end{align}
where $\Omega:\mathbb{E}\to \CC$, which is called a ``synchronization function'' relating the information among different pixels and different patches. 
\end{lemma}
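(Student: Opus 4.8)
The plan is to recognize the stated expansion as nothing more than the coordinate expansion of the Hermitian quadratic form $\zeta^*P_{\vFQ}\zeta$, once the block/pixel index structure is made explicit, together with a support statement that identifies which pairs of windows actually contribute. First I would use the factorization $P_{\vFQ}=\vF P_{\vQ}\vF^*$ recorded above and write
\[
\zeta^*P_{\vFQ}\zeta=\sum_{a,b=1}^{Km^2}\overline{\zeta(a)}\,(P_{\vFQ})_{a,b}\,\zeta(b).
\]
I would then reparametrize the index set $\{1,\ldots,Km^2\}$ through the bijection $\ell$ of (\ref{definition:ell}), writing $a=\ell(\vx_i,\vr_i)$ and $b=\ell(\vx_j,\vr_j)$ with $i,j\in\{1,\ldots,K\}$ and $\vr_i,\vr_j\in D_r^{m}$; under this identification $\zeta(\ell(\vx_i,\vr_i))=\zeta_{(i)}(\vr_i)$, so the double sum splits as $\sum_{i,j}\sum_{\vr_i,\vr_j}$. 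Setting $\Omega((i,\vr_i),(j,\vr_j)):=(P_{\vFQ})_{\ell(\vx_i,\vr_i),\,\ell(\vx_j,\vr_j)}$ then reproduces the right-hand side of (\ref{expansion:PFQ:summary}) verbatim, so the only genuine content is that the window-index sum may be restricted to pairs with $\vO_{ij}\neq\emptyset$.

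For that restriction I would argue at the block level. Since $\vF$ is block diagonal with every diagonal block equal to $F$, the $(i,j)$ block of $P_{\vFQ}$ equals $F\,[P_{\vQ}]_{ij}\,F^*$, where $[P_{\vQ}]_{ij}$ is the corresponding $m^2\times m^2$ block of $P_{\vQ}$. From the explicit entries of $P_{\vQ}$ computed above, every entry of $[P_{\vQ}]_{ij}$ carries the factor $\delta_{\vx_i+\vr_i,\,\vx_j+\vr_j}$; hence if the windows $\iota_{\vx_i}(D_r^{m})$ and $\iota_{\vx_j}(D_r^{m})$ are disjoint (that is, $\vO_{ij}=\emptyset$) then $[P_{\vQ}]_{ij}=0$, and consequently $F\,[P_{\vQ}]_{ij}\,F^*=0$. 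Therefore $\Omega((i,\vr_i),(j,\vr_j))$ vanishes identically on such pairs, and the sum over $i,j$ collapses to those with $\vO_{ij}\neq\emptyset$, yielding (\ref{expansion:PFQ:summary}). If an explicit formula for $\Omega$ is wanted, I would substitute the entries of $P_{\vQ}$ and carry out the two DFT conjugations, obtaining
\[
\Omega((i,\vr_i),(j,\vr_j))=\sum_{\substack{\vs_i,\vs_j\in D_r^{m}\\ \vx_i+\vs_i=\vx_j+\vs_j}}F_{\vr_i,\vs_i}\,\frac{\omega(\vs_i)\,\omega^*(\vs_j)}{\sum_{\vr\in J_{\vx_j,\vs_j}}|\omega(\vr)|^2}\,(F^*)_{\vs_j,\vr_j}.
\]

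The point worth flagging, and the only conceptual step rather than bookkeeping, is that the off-diagonal DFT conjugation spreads the sparse block $[P_{\vQ}]_{ij}$ (which has at most one nonzero per row, located where $\vx_i+\vr_i=\vx_j+\vr_j$) into a generically full block of $\Omega$. Thus, although $P_{\vQ}$ couples only the physically overlapping pixels, the synchronization function $\Omega$ couples \emph{every} pixel pair $(\vr_i,\vr_j)$ within an overlapping pair of windows; this is exactly why the inner sum in (\ref{expansion:PFQ:summary}) ranges over all of $\vr_i,\vr_j\in D_r^{m}$ rather than over the overlap alone, and it is what makes $\Omega$ a connection-type weight on the edge set $\mathbb{E}$ joining pixels of overlapping windows. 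The main obstacle is therefore not the computation but checking that this support description coincides with the intended edge set $\mathbb{E}$ of the graph $\mathbb{G}=(\mathbb{V},\mathbb{E})$; once that identification is fixed, the lemma is a direct rewriting of the quadratic form.
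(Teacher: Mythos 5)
Your proposal is correct, but it takes a genuinely more elementary route than the paper. You treat the lemma as what it literally is: a coordinate expansion of a Hermitian quadratic form. You define $\Omega$ to be the entries of $P_{\vFQ}$ under the index bijection $\ell$, so the only real content is the support statement, which you verify from the block structure: $\vF$ is block diagonal with blocks $F$, every entry of the $(i,j)$ block of $P_{\vQ}$ carries the factor $\delta_{\vx_i+\vr_i,\,\vx_j+\vr_j}$, hence $[P_{\vQ}]_{ij}=0$ and therefore $F[P_{\vQ}]_{ij}F^*=0$ whenever $\vO_{ij}=\emptyset$. That argument is sound, self-contained (it avoids the restriction and translation operators $\vR$, $\vT_{\vx_i}$ that the paper introduces inside its proof), and your closing remark -- that the DFT conjugation spreads the one-nonzero-per-row block $[P_{\vQ}]_{ij}$ into a generically full block, which is precisely why the inner sum runs over all $\vr_i,\vr_j\in D_r^{m}$ -- is the right conceptual justification for the shape of the edge set $\mathbb{E}$. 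The paper does something different and more laborious: it pushes the translation $\vT_{\Delta_{\vx_i\vx_j}}$ through the Fourier conjugation and evaluates the entries in closed form, arriving at $\Omega((i,\vr_i),(j,\vr_j))=V_{\omega_{ij}}(\Delta_{\vx_i\vx_j},\Phi_{\vr_i\vr_j})\,e^{i(\vr_i+\vr_j)\cdot\Delta_{\vx_i\vx_j}/2}$, i.e.\ it identifies $\Omega$ as a phase-modulated Fourier--Wigner (ambiguity) transform of a normalized window $\omega_{ij}$. What each approach buys: yours isolates the minimal logical content and proves the statement as written; the paper's explicit computation is the actual payload consumed downstream, since the affinity $\mathtt{w}=\va_{(i)}(\vr_i)|\Omega|\va_{(j)}(\vr_j)$, the connection function $\mathtt{g}\propto\Omega/|\Omega|$, and the interpretation of $|\Omega|$ as an ambiguity-function affinity between overlapping windows all rest on that closed form. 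So your proof suffices for the lemma itself, but to support the GCL construction that follows one would still need to simplify your displayed entrywise formula (by the change of variables $\vs\mapsto\vs+\Delta_{\vx_i\vx_j}/2$) into the Wigner form the paper derives.
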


\begin{proof}
Denote $\vO_{ij}:=\iota_{\vx_{i}}(D_r^{m})\cap \iota_{\vx_{j}}(D_r^{m})$ to be the overlap of two illumination windows. A direct expansion of (\ref{ptychographic_optimization4}) leads to 
\begin{align}
&\zeta^* P_{\vFQ}\zeta=\zeta^*\vFQ(\vQ^*\vQ)^{-1}\vQ^*\vF^*\zeta\nonumber\\
 =&\sum_{i,j=1}^K\zeta_{(i)}^*F{\diag}(w)\vR\vT_{\vx_{i}}(\vQ^*\vQ)^{-1}\vT^*_{\vx_{j}}\vR^*{\diag}(w^*)F^* \zeta_{(j)}\nonumber\\
 =&\sum_{i,j:\,\vO_{ij}\neq \emptyset}\zeta_{(i)}^*F{\diag}(w)\vR\vT_{\vx_{i}}(\vQ^*\vQ)^{-1}\vT^*_{\vx_{i}} \vT_{\Delta_{\vx_i\vx_j}}\vR^*{\diag}(w^*)F^*\zeta_{(j)}\nonumber,
\end{align}
where $\Delta_{\vx_i\vx_j}:=\vx_{i}-\vx_{j}$ 
and the last equality comes from the fact that $\vT_{\vx_{i}}\vT^*_{\vx_{j}}=\vT_{\Delta_{\vx_i\vx_j}}$ and $\vT^*_{\vx_{i}}\vT_{\vx_{i}}=I$. Clearly if $\vO_{ij}=\emptyset$, $\vT_{\Delta_{\vx_i\vx_j}}\vR^*$ is a zero matrix. Note that $\vT_{\vx_{i}}(\vQ^*\vQ)^{-1}\vT^*_{\vx_{i}}$, as the conjugation of $(\vQ^*\vQ)^{-1}$ by $\vT_{\vx_{i}}$, is diagonal. It actually translates the $\rr_n(\vx_{i})$-th diagonal entry to the $1$-st diagonal entry.
Also note that the overlapping information about the $i$-th and $j$-th illumination windows is preserved in $\vT_{\Delta_{\vx_i\vx_j}}\vR^*$. 

Now we move $\vT_{\Delta_{\vx_i\vx_j}}$ out of $F{\diag}(w)\vR\vT_{\vx_{i}}(\vQ^*\vQ)^{-1}\vT^*_{\vx_{i}}\vT_{\Delta_{\vx_i\vx_j}}\vR^*{\diag}(w^*)F^*$ by a direct expansion:
\begin{align}
 &F{\diag}(w)\vR\vT_{\vx_{i}}(\vQ^*\vQ)^{-1}\vT^*_{\vx_{i}}\vT_{\Delta_{\vx_i\vx_j}}\vR^*{\diag}(w^*)F^*\nonumber\\
 =\,& F\vM^{\Delta_{\vx_i\vx_j}} F^*{\diag}([e^{-i\qq_m^{-1}(1)\cdot\Delta_{\vx_i\vx_j}},\ldots,e^{-i\qq_m^{-1}(m^2)\cdot\Delta_{\vx_i\vx_j}}])\nonumber,
\end{align}
where $\vM^{\Delta_{\vx_i\vx_j}}$ is a $m^2\times m^2$ {\it masking matrix} which is diagonal and depends on ${\Delta_{\vx_i\vx_j}}$:
\begin{align*}
\ve_{\rr_m(\vs)}^T\vM^{\Delta_{\vx_i\vx_j}}\ve_{\rr_m(\vs)}:=\left\{
\begin{array}{lll}
\frac{\omega(\vs)\omega^*(\vs-\Delta_{\vx_i\vx_j})}{\sum_{\vr\in J_{\vx_i,\vs}} |\omega(\vr)|^2} && \mbox{when }\vs\in \mathsf{D}_{ij}\\
0 && \mbox{otherwise},
\end{array}
\right.
\end{align*}
and $\mathsf{D}_{ij}:=\iota_{(0,0)}D_r^{m}\cap[\vT_{\Delta_{\vx_i\vx_j}} \iota_{(0,0)}D_r^{m} ] $. 
 This equality indicates the influence of the restriction matrix $\vR$ -- the non-overlapped parts of the two overlapping subregions cannot be eliminated.
Next, for $\vr_i,\vr_j\in D_r^{m}$, when $\vO_{ij}\neq\emptyset$, the $m^2\times m^2$ matrix $F\vM^{\Delta_{\vx_i\vx_j}} F^*{\diag}([e^{-i\qq_m^{-1}(1)\cdot\Delta_{\vx_i\vx_j}},\ldots,e^{-i\qq_m^{-1}(m^2)\cdot\Delta_{\vx_i\vx_j}}])$ satisfies
\begin{align}
&\ve^T_{\rr_m(\vr_i)} F\vM^{\Delta_{\vx_i\vx_j}} F^*{\diag}([e^{ i\qq_m^{-1}(1)\cdot\Delta_{\vx_i\vx_j}},\ldots,e^{ i\qq_m^{-1}(m^2)\cdot\Delta_{\vx_i\vx_j}}]) \ve_{\rr_m(\vr_j)}\nonumber\\
=& \,\ve_{\rr_m(\vr_i)}^T F\vM^{\Delta_{\vx_i\vx_j}} F^* \ve_{\rr_m(\vr_j)}e^{ i\vr_j\cdot\Delta_{\vx_i\vx_j}} \label{expansion_PFQ_forGCL}\\
=&  \,e^{ i\vr_j\cdot\Delta_{\vx_i\vx_j}}  \sum_{\vs\in D_r^{m}}\ve_{\rr_m(\vs)}^T\vM^{\Delta_{\vx_i\vx_j}}\ve_{\rr_m(\vs)}e^{i(\vr_i-\vr_j)\cdot \vs}\nonumber\\
=&\,e^{ i\vr_j\cdot\Delta_{\vx_i\vx_j}} \sum_{\vs\in \mathsf{D}_{ij}}\frac{\omega(\vs)\omega^*(\vs-\Delta_{\vx_i\vx_j})}{\sum_{\vr\in J_{\vx_i,\vs}} |\omega(\vr)|^2} e^{i(\vr_i-\vr_j)\cdot\vs}\nonumber\\
=& \, e^{ i (\vr_i+\vr_j)\cdot\Delta_{\vx_i\vx_j}/2} \sum_{\vs\in \vT_{\Delta_{\vx_i\vx_j}/2}\mathsf{D}_{ij} }  \frac{\omega(\vs+\Delta_{\vx_i\vx_j}/2)\omega^*(\vs-\Delta_{\vx_i\vx_j}/2)}{\sum_{\vr\in J_{\vx_j,\vs}} |\omega(\vr)|^2} e^{i(\vr_i-\vr_j)\cdot \vs }\nonumber\\
 =:& \,e^{ i ( \vr_i+\vr_j)\cdot\Delta_{\vx_i\vx_j}/2} V_{\omega_{ij}} ( \Delta_{\vx_i\vx_j} , \Phi_{\vr_i\vr_j}),\nonumber
\end{align}
where $\Phi_{\vr_i\vr_j}:= \vr_i-\vr_j$, 
\[
\omega_{ij}(\vr):=\frac{\omega(\vr)}{\sqrt{\sum_{\vs\in J_{(\vx_{i}+\vx_{j})/2,\vr}}|\omega(\vs)|^2}} \chi_{\mathsf{D}_{ij}\cup \vT^*_{\Delta_{\vx_i\vx_j}/2} \mathsf{D}_{ij} \cup \vT^*_{\Delta_{\vx_i\vx_j} }\mathsf{D}_{ij}} 
\]
and $V_{\omega_{ij}} $ is the {\it Fourier-Wigner transform} \cite{Folland:1989} of the function $\omega_{ij}$. To sum up, the $(\ell(i,\vr_i),\ell(j,\vr_j))$-th entry of $P_{\vFQ}$ is $V_{\omega_{ij}} ( \Delta_{\vx_i\vx_j} , \Phi_{\vr_i\vr_j})e^{ i ( \vr_i+\vr_j)\cdot\Delta_{\vx_i\vx_j}/2}$.
As a result, we have
\begin{align}
\zeta^*P_{\vFQ}\zeta=\sum_{i,j:\,\vO_{ij}\neq\emptyset}
\sum_{\vr_i,\vr_j\in D_r^{m}}
\zeta_{(i)}^*(\vr_i)
V_{\omega_{ij}} (\Delta_{\vx_i\vx_j}, \Phi_{\vr_i\vr_j})e^{ i (\vr_i +\vr_j )\cdot\Delta_{\vx_i\vx_j}/2}\zeta_{(j)}(\vr_j)\label{P_FQ:expansion}.
\end{align}
By defining 
\[
\Omega:((i,\vr_i),(j,\vr_j)) \in \mathbb{E}\mapsto V_{\omega_{ij}} (\Delta_{\vx_i\vx_j}, \Phi_{\vr_i \vr_j})e^{ i(\vr_i+\vr_j) \cdot \Delta_{\vx_i\vx_j} /2}\,,
\]
we have 
\begin{align}
\zeta^*P_{\vFQ}\zeta=\sum_{i,j:\,\vO_{ij}\neq\emptyset}
\sum_{\vr_i,\vr_j\in D_r^{m}}
\zeta_{(i)}^*(\vr_i)\Omega((i,\vr_i),(j,\vr_j))\zeta_{(j)}(\vr_j),\nonumber
\end{align}
and (\ref{expansion:PFQ:summary}) is shown.
\end{proof}

Inspired by (\ref{expansion:PFQ:summary}), we could establish a new graph $\mathbb{G}$ associated with the ptychgraphy imaging experiment. The vertices $\mathbb{V}=\mathcal{X}_K\times D_r^{m}$ are constituted by all the pixels of all illuminated images, and we construct an edge between $(i,\vr_i)$ and $(j,\vr_j)$ for all $\vr_i,\vr_j\in D_r^{m}$ if $\iota_{\vx_{i}}(D_r^{m})\cap \iota_{\vx_{j}}(D_r^{m})\neq \emptyset$. Denote $\mathbb{E}$ to be the edge set. Please see Figure \ref{fig:2} for an illustration of the graph $\mathbb{G}=(\mathbb{V},\mathbb{E})$. 

\begin{figure}[t]
  \begin{center}
     \includegraphics[width=0.4\textwidth]{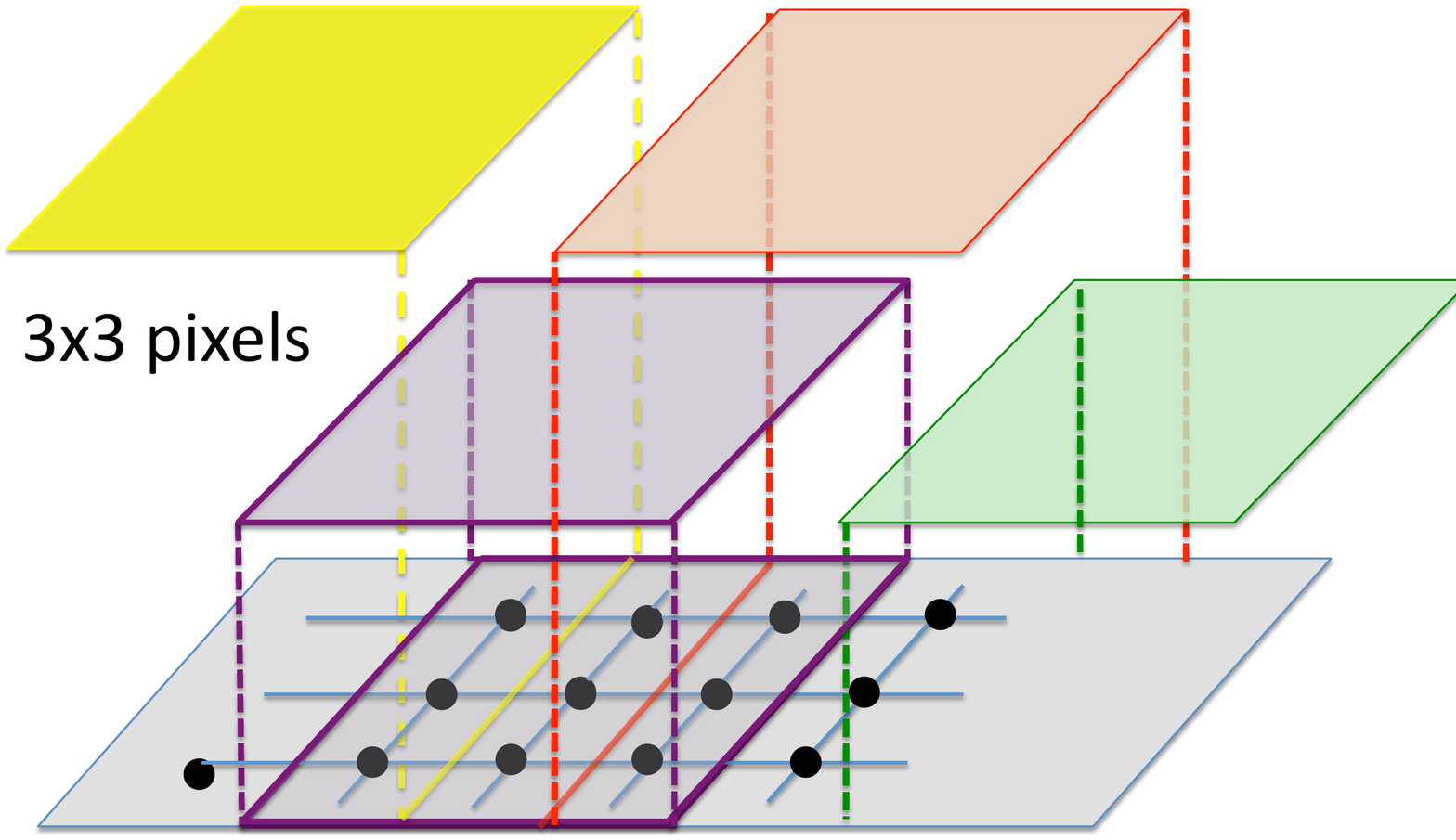}
     \includegraphics[width=0.47\textwidth]{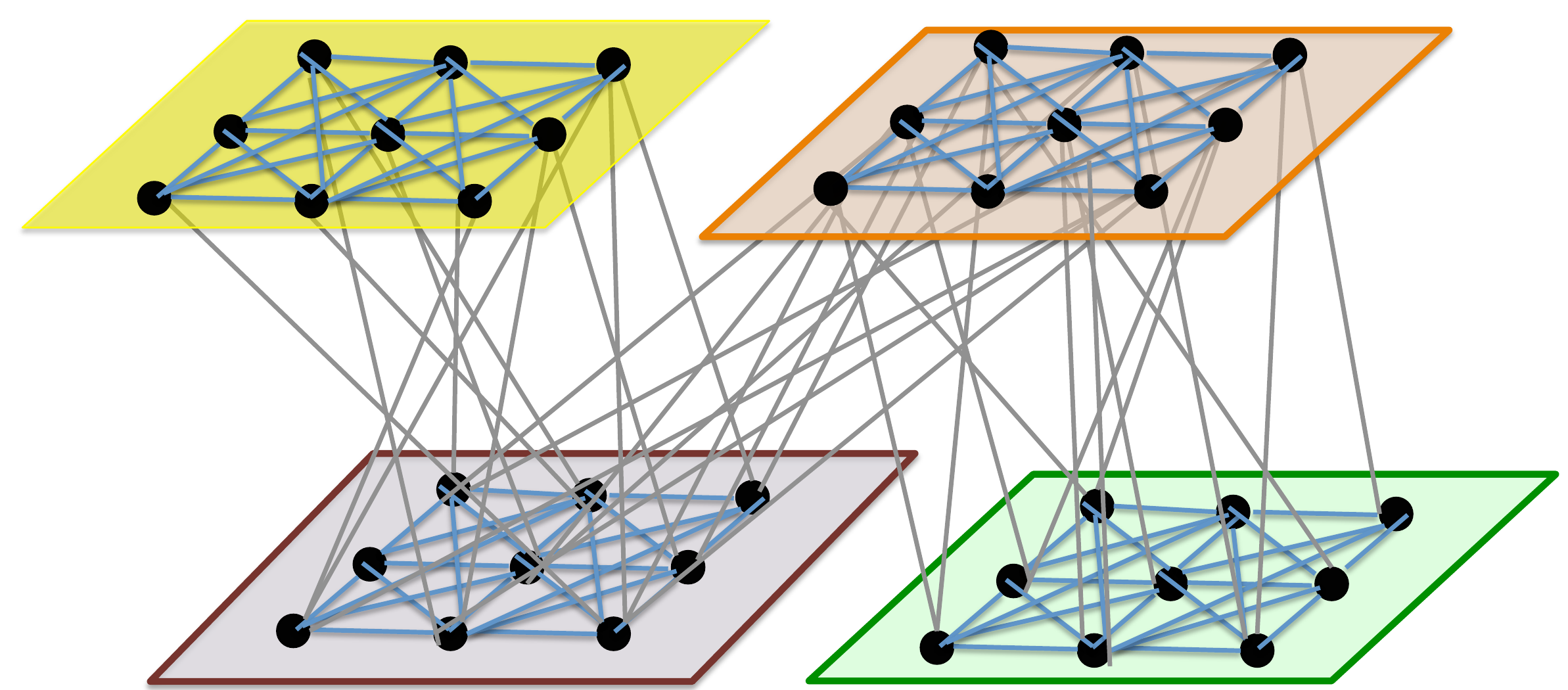}
 \end{center}
 \caption{Left: the illuminative figure for the ptychographic problem. We assume that the unknown object of interest is covered by $4$ illumination windows of size $3\times 3$. Right: the graph $\mathbb{G}$ associated with the algorithm aiming to solve the ptychographic experiment. The block spots are vertices associated with pixels of each illumination image, the gray and blue lines are edges. An edge exists if two pixels are located on the same illumination image (blue line) or if they belong to two overlapping illumination images (gray line). \label{fig:2}}
\end{figure}

Recall that the Fourier-Wigner transform of $\omega_{ij}$ is also called the {\it ambiguity function} of $\omega_{ij}$, which measures the spatial lag $\Delta_{\vx_i\vx_j}$ and frequency shift $\Phi_{\vr_i\vr_j}$ between the two diffraction images when $\iota_{\vx_{i}}(D_r^{m})\cap\iota_{\vx_{j}}(D_r^{m})\neq \emptyset$. 
It is well-known that the absolute value of the ambiguity function gauges how difficult we can distinguish two objects, that is, how similar two objects are \cite[p.33]{Folland:1989}. Thus $V_{\omega_{ij}} ( \Delta_{\vx_i\vx_j} , \Phi_{\vr_i\vr_j})$ can be viewed as a sort of affinity measuring the relationship between two illumination windows. Also, from (\ref{expansion_PFQ_forGCL}) we know that the phase information of $F\diag(\omega)$ gets involved in $V_{\omega_{ij}}$, in particular when $i=j$. Indeed, when we are working with the same patch, $\vM^{0}$ is a diagonal matrix with real entries $|\omega|^2$, so $F\vM^{0} F^*$ contains only the phase information of $F\diag(\omega)$, which influences the phase estimation.

Another intuition behind the ptychgraphy is the following. If two illumination windows overlap, they have common information in the Fourier space up to some phase difference determined by the relative position of the illuminations, while this information is contaminated by the non-overlapping parts of the two illuminations.

Now we take the amplitude information $\va$ into account. It is well known that the larger the amplitude is, the more important its associated phase is if we want to ``reconstruct the image''. Thus, we would pay more attention on reconstructing the phase of pixels in the diffraction images with larger amplitudes, for example, we might want to maximize the following functional with the constraint $\zeta\in \mathbb{T}_{\boldsymbol{1}}$:
\begin{align}
&\zeta^*\diag(\va)P_{\vFQ}\diag(\va)\zeta\label{P_FQ:finalexpansion}\\
=\,&\sum_{i,j:\,\vO_{ij}\neq\emptyset}
\sum_{\vr_i,\vr_j\in D_r^{m}}
\zeta_{(i)}^*(\vr_i)\va_{(i)}(\vr_i)\Omega((i,\vr_i),(j,\vr_j))\va_{(i)}(\vr_i)\zeta_{(j)}(\vr_j).\nonumber
\end{align}

\subsection{Spectral relaxation and phase synchronization}
Based on the above understanding regarding the $P_{\vFQ}$ and the amplitude information, in this section we propose two relaxations of the non-convex optimization problems discussed above to estimate the phase, which lead to a better initial value of the AP algorithm.

The first algorithm is directly motivated by (\ref{P_FQ:finalexpansion}) where we take the affinity information among vertices and phase relationship into account. We have the following observations. 
\begin{itemize}
\item the phase between vertices $(i,\vr_i)$ and $(j,\vr_j)$ are related by a non-unitary transform $\Omega((i,\vr_i),(j,\vr_j))$, which modulation indicating the affinity; 
\item the larger the amplitude $\va_{(i)}(\vr_i)$ is, the more effort we should put in recovering the phase; 
\end{itemize}
In addition, the phase ramping effect, denoted as 
\[
\widetilde{\omega}:=\frac{F\diag(\omega^\vee)}{|F\diag(\omega^\vee)|}\chi_{|F\diag(\omega^\vee)|}+(1-\chi_{|F\diag(\omega^\vee)|})
\]
should be considered. These observations suggest us to consider the following relaxation and its relationship with the recent developed data analysis framework {\it graph connection Laplacian (GCL)} \cite{Singer_Wu:2012,Singer_Wu:2013,Bandeira_Singer_Spielman:2013,Chung_Zhao_Kempton:2013}, which we discuss now. Take the graph $\mathbb{G}=(\mathbb{V},\mathbb{E})$. Define the {\it affinity function} (or weight function) $\mathtt{w}:\,\mathbb{E} \to \RR_+$ to encode the affinity information:
\[
\mathtt{w}((i,\vr_i),(j,\vr_j)) :=\va_{(i)}(\vr_i)|\Omega((i,\vr_i),(j,\vr_j))|\va_{(j)}(\vr_j)\, 
\]
when $((i,\vr_i),(j,\vr_j))\in \mathbb{E}$, and the {\it connection function} $\mathtt{g}:\,\mathbb{E}\to U(1)$ so that
\[
\mathtt{g}((i,\vr_i),(j,\vr_j)) :=\widetilde{\omega}(\vr_i)\left(\frac{\Omega((i,\vr_i),(j,\vr_j))}{|\Omega((i,\vr_i),(j,\vr_j))|}\chi_{\Omega}((i,\vr_i),(j,\vr_j))+(1-\chi_{\Omega}((i,\vr_i),(j,\vr_j)))\right)\widetilde{\omega}^*(\vr_j)
\] 
when $((i,\vr_i),(j,\vr_j))\in \mathbb{E}$, which purely encodes the phase relationship among vertices as well as the phase ramping effect. Recall that $\chi_\Omega$ is the indicator vector for $\Omega$, defined in Section \ref{Section:NotationDefinition}. To sum up, we have constructed the {\it connection graph} $(\mathbb{G},\mathtt{w},\mathtt{g})$ \cite{Singer_Wu:2012,Singer_Wu:2013,Bandeira_Singer_Spielman:2013,Chung_Zhao_Kempton:2013}. Next, with the connection graph, we define a complex $Km^2\times Km^2$ matrix $\vS$ so that 
\[
\vS(\ell(i,\vr_i),\ell(j,\vr_j))=
\left\{
\begin{array}{ll}
\mathtt{w}((i,\vr_i),(j,\vr_j))\mathtt{g}((i,\vr_i),(j,\vr_j)) & \mbox{ when }((i,\vr_i),(j,\vr_j))\in\mathbb{E}\\
0 &\mbox{ otherwise}
\end{array}
\right.
\]
and a real $Km^2\times Km^2$ diagonal matrix $\vD$ so that
\[
\vD(\ell(i,\vr_i),\ell(i,\vr_i))=\sum_{((i,\vr_i),(j,\vr_j))\in \mathbb{E}}\mathtt{w}((i,\vr_i),(j,\vr_j)).
\]
Here, recall the definition of $\ell$ in (\ref{definition:ell}).
Then, the GCL matrix is defined as $\vI-\vD^{-1}\vS$. Note that $\vD$ is invertible by Assumption \ref{assumption:illumination} and the nonzero-everywhere assumption of $\mathtt{w}$. We thus propose our first phase estimator to be the phase of the top eigenvector of $\vD^{-1}\vS$, which we call {\it GCL-phase synchronization (GCL-PS)}. 

We mention that the GCL is a generalization of the well known graph Laplacian in that it takes not only the affinity between vertices into account but also the relationship between vertices \cite{Singer_Wu:2012}. To be more precise, if we take a complex valued function $\theta:\mathbb{V}\to \CC$, we have the following expansion
\[
[\vD^{-1}\vS\theta](\ell(i,\vr_i))=\frac{\displaystyle\sum_{((i,\vr_i),(j,\vr_j))\in \mathbb{E}}\mathtt{w}((i,\vr_i),(j,\vr_j))\mathtt{g}((i,\vr_i),(j,\vr_j))\theta(j,\vr_j)}{\displaystyle\sum_{((i,\vr_i),(j,\vr_j))\in \mathbb{E}}\mathtt{w}((i,\vr_i),(j,\vr_j))}.
\] 
This formula can be viewed as a generalized random walk on the graph. Indeed, if we view the complex-valued function $\theta$ as the status of a particle defined on the vertices, when we move from one vertex to the other one, the status is modified according to the relationship between vertices encoded in $\mathtt{g}$. Clearly, if the complex-valued status $\theta$ in all vertices are ``synchronized'' according to the described relationship $\mathtt{g}$, that is, $\theta(i,\vr_i)=\mathtt{g}((i,\vr_i),(j,\vr_j))\theta(j,\vr_j)$ for all $((i,\vr_i),(j,\vr_j))\in \mathbb{E}$, then $[\vD^{-1}\vS\theta](\ell(i,\vr_i))$ will be the same as $\theta(i,\vr_i)$, and hence $\theta^*\vD^{-1}\vS\theta$ is maximized. Thus, the top eigenvector of $\vD^{-1}\vS$ contains the ``synchronized phase'' we are after. We mention that $\vD^{-1}\vS$ is similar to the Hermitian matrix $\vD^{-1/2}\vS\vD^{-1/2}$, so evaluating its eigenstructure can be numerically efficient. See Section \ref{section:NumericalResults} for the numerical performance of this approach.

The synchronization property of GCL has been studied in \cite{Bandeira_Singer_Spielman:2013,Chung_Zhao_Kempton:2013}. While noise is inevitable in real data, the robustness of GCL to different kinds of noises have been studied in the framework of block random matrix and reported in \cite{ElKaroui_Wu:2013,ElKaroui_Wu:2014}. In addition, under the manifold setup \cite{Singer_Wu:2012,Singer_Wu:2013}, it asymptotically converges to the heat kernel of the associated connection Laplacian, which top eigenvector-field is the most parallel vector field branded in the manifold structure. We refer the reader to the appendix of \cite{ElKaroui_Wu:2014} for a summary of the above results. 

The second algorithm we propose has the same flavor, but we consider the amplitude information in a different way compared with (\ref{P_FQ:finalexpansion}). Indeed,  the amplitude is taken into consideration as a truncation threshold leading to the following relaxation of (\ref{ptychographic_optimization4}) to estimate the phase. Based on the amplitude, we define a thresholding matrix
\[
T_{\va} := \text{diag}(\chi_{\va>\epsilon_a}),
\]
where $\epsilon_a\geq 0$ is the threshold chosen by the user, and evaluate the following functional
\[
\argmax_{\zeta\in\CC^{Km^2},\,\|\zeta\|=1}\zeta^*T_{\va} P_{\vFQ} T_{\va}\zeta,
\]
which is equivalent to finding the top eigenvector of the Hermitian matrix $T_{\va} P_{\vFQ} T_{\va}$. Our second proposed estimator of the phase to the ptychography problem is then the phase of the top eigenvector of $T_{\va} P_{\vFQ} T_{\va}$. We call this approach to the {\it truncation phase synchronization (t-PS)} algorithm. See Section \ref{section:NumericalResults} for its numerical performance. This optimization problem is essentially different from (\ref{ptychographic_optimization3}) due to the thresholding, and this difference plays an essential role in the optimization. Its theoretical property is beyond the scope of this paper and will be reported in another paper.

\section{Numerical results}\label{section:NumericalResults}
{We begin with describing the two lens we use. The first one is a typical illumination probe in an experimental system. The illuminating beam is formed by a small lens, with a dark ``beam-stop'' to sort-out harmonic contaminations formed by diffractive Fresnel lenses, represented by a circular aperture in the Fourier domain. The lens is denoted as $\omega_{\text{s}}$ and is illustrated in the top row of Figure \ref{fig:probes}.
The second is a {\it band-limited random (BLR) lens}, denoted as $\omega_{\text{BLR}}$ which we describe now. Note that a small lens can only ``connect'' Fourier frequencies that are close together, while a wide lens produces a small illumination and the illumination scheme can only connect frames that are near each other. The intuition behind the synchronization analysis of the ptychographic problem leads us to suggest a different lens that enables to connect pixels across the data space. Experimental observations confirm that diffuse probes \cite{Guizar-Sicairos_Holler_Diaz_Vila-Comamala_Bunk_Menzel:2012, Maiden_Morrison_Kaulich_Gianoncelli_Rodenburg:2013}, and wide apertures \cite{Maiden_Humphry_Zhang_Rodenburg:2011} produce better results in ptychography. We design our second lens by setting the amplitude and  a random phase of an annular aperture in the Fourier domain, then iteratively {adjust the amplitude in real and Fourier domains to determine a lens with a circular focus and given amplitude}. The motivation for the limited size of the focus is to reduce the requirements of the experimental detector response function (such as pixel size). Such lens can be fabricated using lithographic techniques \cite{Chao_Kim_Rekawa_Fischer_Anderson:2009}. The second lens is described in the bottom row of Figure \ref{fig:probes}.}

We begin with a small problem -- an object of size $256\times 256$ pixels, that is $n=256$, shown in Figure \ref{fig:test1}, using the lens $\omega_{\text{s}}$.
We collect $k=32\times32$ frames, with $128\times128$ pixels, that is $m=128$. 
The frames are distributed uniformly to cover the object: we start by setting the positions $\vx_i=(x_i,y_j)$ on a square grid lattice, with {$x_i-x_{i+1}=\Delta x $ and $y_i-y_{i+1}=\Delta y $. In this first experiment, we take $\Delta x=\Delta y=8$. Then we shear odd rows, that is, $x_i$, by $\Delta x/2$ and perturb the position by a random perturbation randomly sampled uniformly from $[-1.5,+1.5]$ in both $x_i$ and $y_i$}. Fractional pixel shifts are accounted by interpolation of the illumination matrix. We use the following algorithms, where PS is the abbreviation of phase synchronization.
\begin{framed}
\noindent
\emph{\underline{AP}}
\begin{enumerate}
\item start with random object: $\zeta^{(0)}=\vFQ (\text{random})$ ;
\item compute $\zeta^{(\ell)}=[P_{\vFQ} P_{\va}]^\ell \zeta^{(0)}$, $\ell\geq 1$ chosen by the user;
\item $\psi^{(\ell)}_{\text{AP}}= (\vQ^*\vQ)^{-1} \vQ^* \vF^* \zeta^{(\ell)}$.
\end{enumerate}
{\emph{\underline{GCL-PS}}
\begin{enumerate}
\item find the largest eigenvalue $v_0$ of the GCL matrix $\vD^{-1}\vS$;
\item  $\psi_{\text{GCL-PS}}= (\vQ^*\vQ)^{-1} \vQ^* \vF^* P_{\va} v_0$.
\end{enumerate}

\noindent
\emph{\underline{t-PS}}
\begin{enumerate}
\item find the largest eigenvalue $v_0$ of the phase synchronization matrix $T_{\va} P_{\vFQ} T_{\va}$, where $T_{\va}= \text{diag}(\chi_{\va>\epsilon_a})$; 
\item  $\psi_{\text{t-PS}}= (\vQ^*\vQ)^{-1} \vQ^* \vF^* P_{\va} v_0$.
\end{enumerate}

\noindent
\emph{\underline{GCL-PS+AP}}
\begin{enumerate}
\item find the largest eigenvalue $v_0$ of $\vD^{-1}\vS$;
\item compute $\zeta^{(\ell)}=[P_{\vFQ} P_{\va}]^\ell v_0$, $\ell\geq 1$ chosen by the user;
\item $\psi^{(\ell)}_{\text{GCL-PS+AP}}= (\vQ^*\vQ)^{-1} \vQ^* \vF^* \zeta^{(\ell)}$.
\end{enumerate}}

\noindent
\emph{\underline{t-PS+AP}}
\begin{enumerate}
\item find the largest eigenvalue $v_0$ of $T_{\va} P_{\vFQ} T_{\va}$;
\item compute $\zeta^{(\ell)}=[P_{\vFQ} P_{\va}]^\ell v_0$, $\ell\geq 1$ chosen by the user;
\item $\psi^{(\ell)}_{\text{t-PS+AP}}= (\vQ^*\vQ)^{-1} \vQ^* \vF^* \zeta^{(\ell)}$.
\end{enumerate}
\end{framed}
The convergence is monitored by:
\begin{align*}
\left\{\begin{array}{l}
\varepsilon_{\va}^{(\ell)}:=\tfrac{1}{\|\va\|}\|[I-P_{\va}] \zeta^{\ell}\|,\\
\varepsilon_{\vFQ}^{(\ell)}:=\tfrac{1}{\|\va\|}\|[I-P_{\vFQ}] \zeta^{\ell}\|,\\
\varepsilon_{\va\vFQ}^{(\ell)}:=\tfrac{1}{\|\va\|}\|[P_{\va}-P_{\vFQ}] \zeta^{\ell}\|,\\
\varepsilon_{0}^{(\ell)}:=\tfrac{1}{\|\va\|} \min_{t}\|\zeta^{\ell} -e^{i t} \vFQ \psi_0\|,\\
\varepsilon_{\Delta \ell}^{(\ell)}:=\tfrac{1}{\|\va\|} \|\zeta^{\ell} -\zeta^{\ell+1} \|.
\end{array}\right.
\end{align*}
The result of the first experiment is shown in Figure \ref{fig:test1}.

We repeat the same experiment with an image of a self-assembled cluster of $50$ nm colloidal gold nanoparticles obtained by Scanning Electron Microscopy. To produce a complex image, the gray-scale value are projected onto a circle in the complex plane. {The size is $256\times 256$ pixels and we use the lens $\omega_{\text{s}}$.} The result of the second experiment is shown in Figure \ref{fig:test2}.

A few things to notice from Figures \ref{fig:test1} and Figure \ref{fig:test2}. The first is that $\|\zeta^{\ell}-\zeta^{\ell+1}\|=\varepsilon_{\Delta \ell}^{(\ell)}\|\va\| $ does not decrease monotonically, and the second is that the eigenvector with the largest eigenvalue of $T_{\va} P_{\vFQ} T_{\va}$ is already quite a good image, and the last, the convergence rate is similar but t-PS produces a better start. Also note that typically $\epsilon_{\vFQ} ,\epsilon_{a}, \epsilon_{aQ}$ are very similar and overlap.

We compare these two illumination functions, $\omega_{\text{s}}$ and $\omega_{\text{BLR}}$, with the same two objects with the same parameters as before. The results are shown in Figure \ref{fig:probes_convergence_barbara} and Figure \ref{fig:probes_convergence_goldballs}. Clearly t-PS produces a better start with the new illumination. In this example, such better start also leads to higher rate of convergence. 

{Yet next, we test the algorithm in a larger problem, an object of $512\times 512$ pixels, that is $n=512$, with the same lens size ($128\times 128$). We increase the field of view of the illumination scheme 
with increased spacing among frames $\Delta x=16$ and $\Delta y=16$.} One of the issues of projection algorithms such as AP is that frames that are far apart communicate very weakly with each other, this leads to slower rate of convergence.  
This is an issue when we are limited by the number of iterations, due to high data rate and finite computational resources. In Figure \ref{fig:barbara} we show the result of $101$ iterations of AP with holes in the scarf, while t-PS gives a good initial start that leads to improved SNR. Notice that the hole in the scarf and other defects are produced by AP alone.

{In our next numerical experiment, we introduce new algorithms that lead to over $80\times$ acceleration in the rate of convergence.} First, we use the RAAR algorithm \cite{Luke:2005} described below which is popular among the optical community \cite{Chapman_Barty_Marchesini_Noy_Hau-Riege_Cui_Howells_Rosen_He_Spence:2006} (using RAAR in combination with a shrink-wrap algorithm \cite{Marchesini_He_Chapman_Hau-Riege_Noy_Howells_Weierstall_Spence:2003} to enforce sparsity) because it often leads to improved convergence rate. Second, we introduce a {frame-wise synchronization} technique to adjust the phase of every frame at every iteration based on existing frame-wide local information.  
Finally, we combine frame-wise synchronization with projected \textit {conjugate gradient} (CG).
\begin{framed}
\noindent
\emph{\underline{RAAR}}
\begin{enumerate}
\item start with random object $\zeta^{(0)}=\vFQ (\text{random})$ 
\item compute
$\zeta^{(\ell)}= [2\beta P_{\vFQ}P_{\va}+(1-2\beta) \beta P_{\va} +\beta(P_{\vFQ}-I)]^{\ell}\zeta^{(0)}$
where $\beta=0.9$ and $\ell\geq 1$ is chosen by the user.
\item $\psi^{(\ell)}_{\text{RAAR}}= (\vQ^*\vQ)^{-1} \vQ^* \vF^* \zeta^{(\ell)}$, 
\end{enumerate}

\noindent
\emph{\underline{t-PS+RAAR}}
\begin{enumerate}
\item find the largest eigenvalue $v_0$ of the kernel  $T_{\va} P_{\vFQ} T_{\va}$
\item compute $\zeta^{(\ell)}= [2\beta P_{\vFQ}P_{\va}+(1-2\beta) \beta P_{\va} +\beta(P_{\vFQ}-I)]^{\ell} P_{\va}v_0$, where $\beta=0.9$ and $\ell\geq 1$ is chosen by the user;
\item $\psi^{(\ell)}_{\text{t-PS+RAAR}}= (\vQ^*\vQ)^{-1} \vQ^* \vF^* \zeta^{(\ell)}$, 
\end{enumerate}

\noindent
\emph{\underline{t-PS+synchro-RAAR}}
\begin{enumerate}
\item t-PS: 
\subitem find the largest eigenvalue $v_0$ of the kernel  $T_{\va} P_{\vFQ} T_{\va}$. Start
$$\zeta^{(0)}=P_{\vFQ} P_{\va} v_0;$$
\item frame-wise synchronization: 
\begin{enumerate}
\item 
Find the largest eigenvalue and eigenvector $\vxi^{(\ell)}$ of the matrix ${ \vK}^{(\ell)}$, of size $K\times K$ where the $(i,j)$-th entry is
\begin{align*}
\vK_{i,j}^{(\ell)}:=& \frac{z_{(i)}^{\ast(\ell)} Q_{(i)}}{\|\va_{(i)}\|}  (Q^\ast Q)^{-1}    
\frac{Q_{(j)}^\ast z_{(j)}^{(\ell)}}{\|\va_{(j)}\|} ,\, \text{where 
$z_{(i)}^{(\ell)}=F^\ast (P_{\va} \zeta^{\ell})_{(i)}$.}
\end{align*}
\item Replace $P_{\vFQ}$ by 
$$
P_{\vFQ}^{(l)}:=P_{\vFQ}\, \diag\left(\boldsymbol{\mathsf{B}}\frac {\xi^{(l)}}{|\xi^{(l)}|}\right),
$$
where $\boldsymbol{\mathsf{B}}$ is a $K\times K$ diagonal block matrix with its diagonal the $m^2\times 1$ row vector $\boldsymbol{1}^T$ that distributes the frame-wise phase to all the pixels;
\end{enumerate}
\item {RAAR with $P_{\vFQ}^{(l)}$}:
$$\zeta^{(l)}= [2\beta  P^{(l-1)}_{\vFQ} P_{\va}+(1-2\beta) \beta P_{\va} +\beta( P^{(l-1)}_{\vFQ}-I)]\zeta^{(l-1)};$$
where $\beta=0.9$;
\item repeat (2)-(5) $\ell\geq 1$ steps until convergences or maximum iterations, where $\ell$ is determined by the user; 
\item $\psi^{(\ell)}_{\text{t-PS+synchro-RAAR}}= (\vQ^*\vQ)^{-1} \vQ^* \vF^* \zeta^{(\ell)}$, 
\end{enumerate}
\noindent
\emph{\underline{t-PS+synchro-CG}}
\begin{enumerate}
\item t-PS: see above to initialize $\zeta^{(0)}$
\item frame-wise synchronization to compute $P_{FQ}^{(\ell)}$: see above
\item Conjugate gradient
\begin{enumerate}
\item	projected gradient: $\Delta \zeta^{(\ell)}=P_{FQ}^{(\ell)} P_{a}\zeta^{(\ell)}-\zeta^{(\ell)} $, 
\item conjugate direction: $\Lambda \zeta^{(\ell)}=\begin{cases} \Delta \zeta^{(\ell)} & \text{if $\ell=0$}\\
\Delta \zeta^{(\ell)}+\beta^{(\ell)} \Lambda \zeta^{(\ell-1)} & \text{otherwise,}
\end{cases}$ \newline
 where 
$\beta^{(\ell)}=\max \left \{ 0, \frac{\Delta \zeta^{(\ell)\ast} \left ( \Delta \zeta^{(\ell)} -\Delta \zeta^{(\ell-1)} \right )}{\|\Delta \zeta^{(\ell-1)}\|^2}\right \}$
\item line search: 
$\alpha^{(\ell)}=\arg\min_\alpha \| |\zeta^{(\ell)}+ \alpha \Lambda \zeta^{(\ell)} |-\va \|$.
\item set $\zeta^{(\ell)}=\zeta^{(\ell-1)}+ \alpha ^{(\ell)}\Lambda \zeta^{(\ell)}$.
\end{enumerate}
\item repeat (2)-(3) until convergences or maximum iterations
\item  $\psi^{(\ell)}_{\text{t-PS+synchro-CG}}= (\vQ^*\vQ)^{-1} \vQ^* \vF^* \zeta^{(\ell)}$, 
\end{enumerate}

\end{framed}
The frame-wise synchronization, step (2), is motivated by the augmented approach \cite{Marchesini_Schirotzek_Yang_Wu_Maia:2013}. We estimate a phase factor for each frame based on the existing phase estimator of each frames, which leads to long-range phase synchronization across the image. Indeed, we consider
\[
\argmin_{\xi\in \CC^{K};\,|\xi|=\boldsymbol{1}}\|(I-P_{\vFQ})\diag(P_{\va}\zeta^{(l)})\boldsymbol{\mathsf{B}}\xi\|,
\] 
where $\boldsymbol{\mathsf{B}}$ is a $K\times K$ diagonal block matrix with its diagonal the $m^2\times 1$ row vector $\boldsymbol{1}^T$
that distributes the phase over the frame.  We can re-write  as:
\[
\argmax_{\xi\in \CC^{K};\,|\xi_{(i)}|=\|\va_{(i)}\| }\boldsymbol{\xi}^\ast \vK^{(\ell)} \boldsymbol{\xi},
\,\,
\vK_{i,j}^{(\ell)}:= \frac{z_{(i)}^{\ast(\ell)} Q_{(i)}}{\|\va_{(i)}\|}   \frac{1}{Q^\ast Q}   \frac{Q_{(j)}^\ast z_{(j)}^{(\ell)}}{\|\va_{(j)}\|}
\] 
where $z_{(i)}^{(\ell)}=F^\ast (P_{\va} \zeta^{\ell})_{(i)}$, which is relaxed by finding the largest eigenvector of $\vK^{(\ell)}$.
  That is, $\vK^{(\ell)}$ comes from expanding the functional $\|(I-P_{\vFQ})\diag(P_{\va}\zeta^{(l)})\boldsymbol{\mathsf{B}}\xi\|^2$.

The scaling factor  $\left (\vQ^\ast \vQ\right )$ in $P_{\vFQ}$ can be weighted out by considering 
the pairwise relationship:
 \begin{align}
T_{(i)}^\ast R^\ast Q_{(i)} T_{(j)}^\ast R^\ast \left (Q_{(j)} \psi_0\right ) &=      T_{(j)}^\ast  R^\ast Q_{(j)} T_{(i)}^\ast R^\ast \left (Q_{(i)}  \psi_0\right ) 
\end{align}
 by swapping the diagonal matrix $T^\ast_{(i)} R^\ast Q_{(i)}$.  
 We optimize the frame-wise phase vector $\vxi$ based on the existing estimator 
 \begin{align}
\argmin_{\xi\in \CC^{K};\,|\xi|=\boldsymbol{1}}
\sum_{i,j} \left \| T_{(i)}^\ast R^\ast Q_{(i)} T_{(j)}^\ast R^\ast   z^{(\ell)}_{(j)} \xi_j-     T_{(j)}^\ast  R^\ast Q_{(j)} T_{(i)}^\ast R^\ast 
 z^{(\ell)}_{(i)} \xi_i \right \|^2,
\end{align}
where $z^{(\ell)}_{(i)}:=( \vF^\ast P_{\va} \zeta^{(\ell)})_{(i)}$. This yields the following synchronization problem 
\begin{align*}
\argmax_{\xi\in \CC^{K};\,|\xi_{(i)}|=\left \|\mathbb{Q}_{(i)}^\ast  \vz_{(i)}^{(\ell)} \right \|}
 \boldsymbol{\xi}^\ast \boldsymbol{\mathcal{K}}{\xi} ,
\end{align*}
where ${\mathbb Q}_{(i)} = \left (\vR T_{(i)} \sqrt{\vQ^\ast \vQ}  T_{(i)}^\ast \vR^\ast \right )$ and the kernel $ \boldsymbol{\mathcal{K}}^{(l)}$ is given by
\begin{align*}
\boldsymbol{\mathcal{K}}_{i,j}^{(l)}:=& \left( \frac{ Q_{(i)} z_{(i)}^{(l)}}{\left \| {\mathbb{Q}}_{(i)} z_{(i)}^{(l)} \right \|} \right )^\ast
	\left (\frac{Q_{(j)}^\ast  z_{(j)}^{(l)} }{ \left \| {\mathbb{Q}}_{(j)} z_{(j)}^{(l)} \right \|}\right ).	
	\end{align*}
When $\vQ^\ast \vQ$ is constant, $ \vK^{(\ell)}= \boldsymbol{\mathcal{K}}^{(\ell)}$.
In our numerical experiments, the two kernels yield similar results. 

 We mention that this frame-wise synchronization can be justified by realizing that at each iteration, $I-K^{(l)}$ can be understood as the GCL built from the graph associated with the illumination windows so that the estimated frame-wise phases are synchronized according to the GCL $(I-K^{(l)})$. 
 Thus, this frame-wise phase estimation leads to the {\it long range} phase synchronization.
The nomination of ``synchro-RAAR'' and ``synchro-CG" is to emphasize that we do not use $P_{\vFQ}$ in the ordinary RAAR step but use the frame-wise synchronized $P^{(\ell-1)}_{\vFQ}$, where the estimated frame-wise phase corrector $\xi^{(l)}/|\xi^{(l)}|$ are  distributed to all the pixels by $\boldsymbol{\mathsf{B}}$.

We tested these algorithms, as well as the AP and t-PS+AP algorithms, on the same data setup in Figure \ref{fig:barbara}, and the convergence results of different algorithms are shown in Figure \ref{fig:synchonize} for comparison. Notice the change of scale in the last plot, where convergence is over 80$\times$ faster than the AP algorithm.

In our final test, we test the AP algorithm with noise. Noisy data is simulated using a proxy for Poisson statistics.
We define $\boldsymbol{\sigma}$ a randomly distributed gaussian noise, and simulate noisy data and define 
the measurement error $\varepsilon_{\sigma}$: 
\begin{align*}
\va&=\sqrt{I_{\text{measured}}}, \,\, I_{\text{measured}}=\left |\vF\vQ \psi_0|^2 + \diag(\boldsymbol{\sigma}) |\vF\vQ \psi_0|  \right |\\
\varepsilon_{\sigma}&:=\|\va-|\vF \vQ \psi_0|\|/\|\va\|
\end{align*}
We performed several tests where we vary the variance of $\sigma$ and apply up to 5000 iterations of the AP algorithm.  In Figure \ref{fig:noise} we show the linear relationship between reconstruction error $\varepsilon_0^{(\ell)}$ vs data noise $\varepsilon_\sigma$ over several orders of magnitude.  These tests where performed in single precision, which limited the noise to $10^{-7}$. The robustness result of algorithms based on GCL is supported by the results reported in \cite{ElKaroui_Wu:2013,ElKaroui_Wu:2014} under the framework of block random matrix.

\begin{figure}[t]{
\subfigure[Illumination: $\omega_{\text{s}}$]{
	\includegraphics[width=0.3\textwidth]{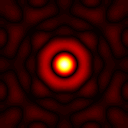}	
	}
		\subfigure[Fourier transform of $\omega_{\text{s}}$]{
	\includegraphics[width=0.3\textwidth]{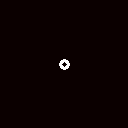}	
	}\\
	\subfigure[Illumination: $\omega_{\text{BLR}}$]{
	\includegraphics[width=0.3\textwidth]{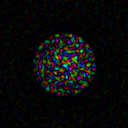}	
	}
	\subfigure[Fourier transform of $\omega_{\text{BLR}}$]{
	\includegraphics[width=0.3\textwidth]{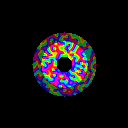}	
	}
 } 
    \caption{{Illumination functions and their Fourier transform. The top row is the small lens $\omega_{s}$ and the bottom row is the band-limited random (BLR) lens $\omega_{\text{BLR}}$. The phase of the complex illumination is represented in color.}}
\label{fig:probes}
\end{figure}

\begin{figure}[t]{
\centering
\subfigure[truth]{
	\includegraphics[width=0.3\textwidth]{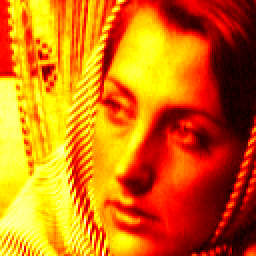}	
	}
		\subfigure[$\psi^{(1)}_{\text{AP}}$]{
	\includegraphics[width=0.3\textwidth]{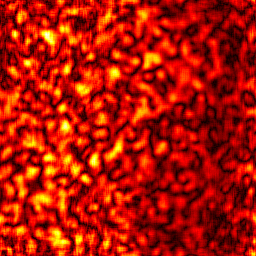}	
	}
	\subfigure[$\psi_{\text{t-PS}}$ ]{
	\includegraphics[width=0.3\textwidth]{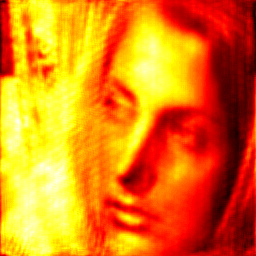}	
	}\\
	\subfigure[$\psi_{\text{GCL-PS}}$]{
	\includegraphics[width=0.3\textwidth]{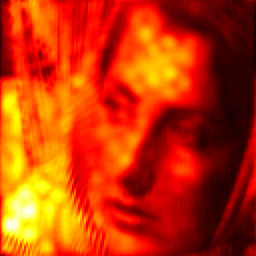}	
	}
	\subfigure[$\psi^{(101)}_{\text{t-PS+AP}}$]{
	\includegraphics[width=0.3\textwidth]{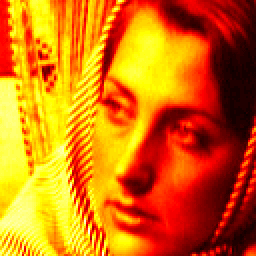}	
	}
	\subfigure[$\psi^{(101)}_{\text{GCL-PS+AP}}$]{
	\includegraphics[width=0.3\textwidth]{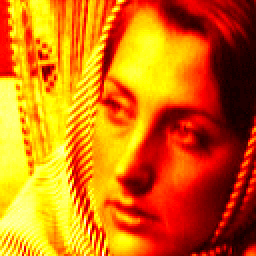}	
	}\\
	\subfigure[AP]{
	\includegraphics[width=0.3\textwidth,clip,bb=50 200 600 600]{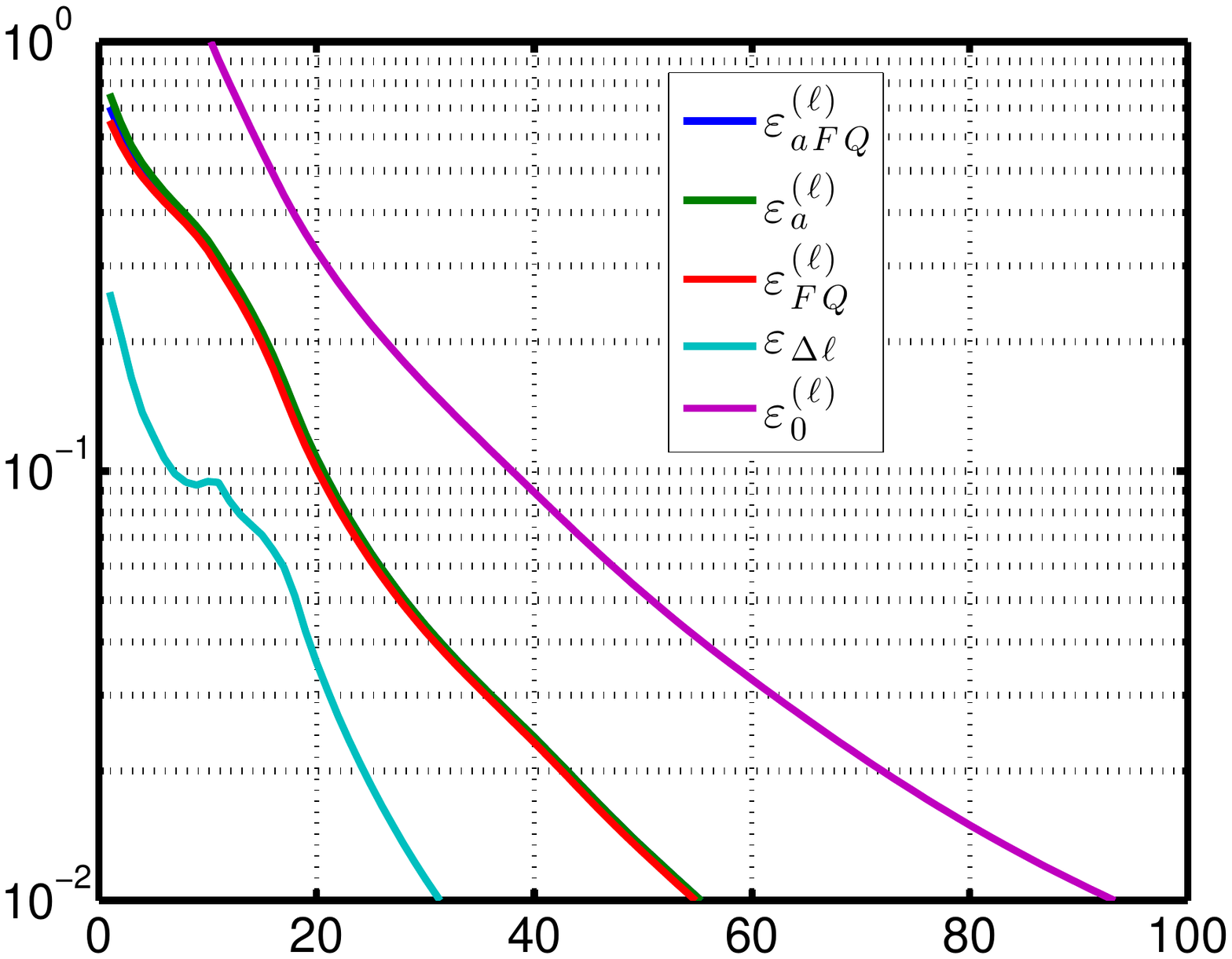}
	}	
	\subfigure[t-PS+AP]{
	\includegraphics[width=0.3\textwidth,clip,bb=50 200 600 600]{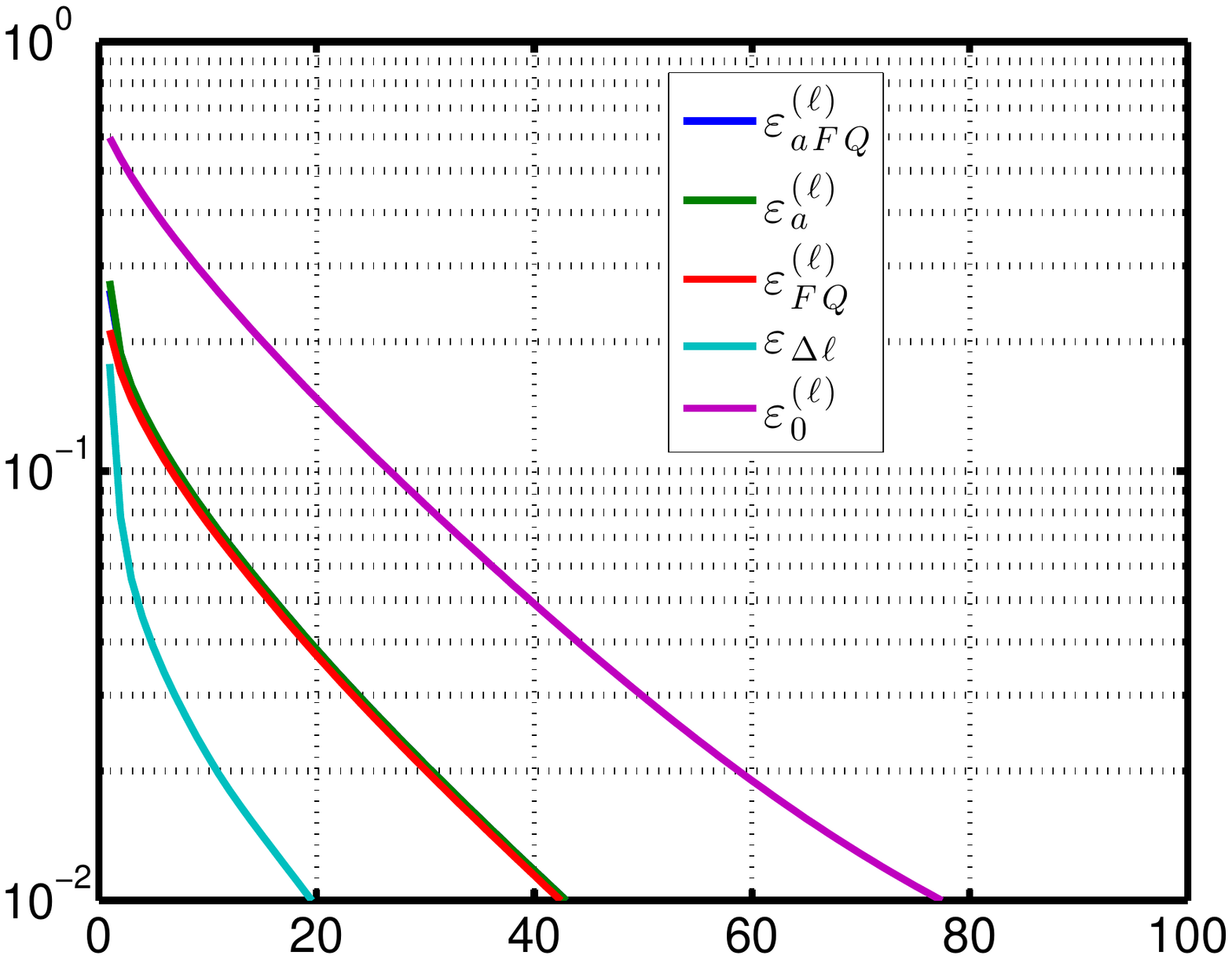}	
	}
	\subfigure[GCL-PS+AP]{
	\includegraphics[width=0.3\textwidth,clip,bb=50 200 600 600]{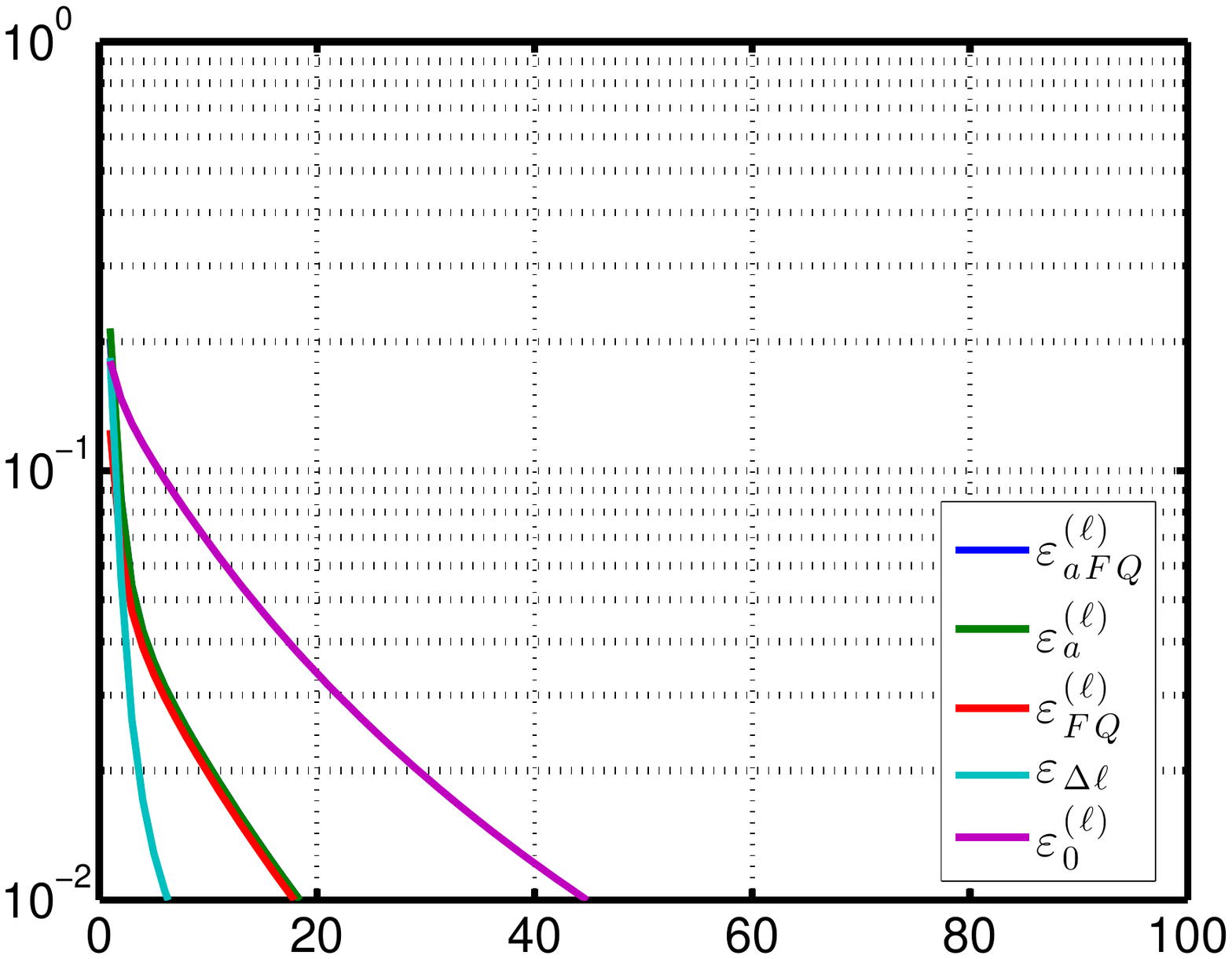}	
	}
 }  
\caption{{Results on the Barbara image of size $256\times 256$ with $\omega_{\text{s}}$ lens and the illumination scheme described in the content ($\Delta x=\Delta y=8$ with perturbation). For the t-PS algorithm, we set $\epsilon_a$ so that it selects $98\%$ of the highest values of $\va$. (a) the ground truth; (b) $\psi^{(1)}_{\text{AP}}$; (c) $\psi_{\text{t-PS}}$; (d) $\psi_{\text{GCL-PS}}$; (e) $\psi^{(101)}_{\text{t-PS+AP+AP}}$; (f) $\psi^{(101)}_{\text{GCL-PS+AP}}$; (g) convergence of AP with a random start; (h) convergence of $\text{AP}$ with the t-PS start; (i) convergence of $\text{AP}$ with the GCL-PS start. }}
\label{fig:test1}
\end{figure}

\begin{figure*}[t]{
\centering
\subfigure[truth]{
	\includegraphics[width=0.3\textwidth]{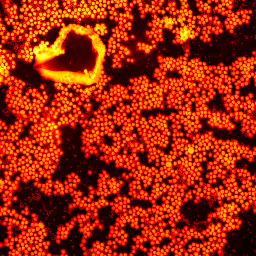}	
	}
		\subfigure[$\psi^{(1)}_{\text{AP}}$]{
	\includegraphics[width=0.3\textwidth]{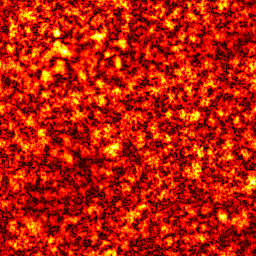}	
	}
	\subfigure[$\psi_{\text{t-PS}}$]{
	\includegraphics[width=0.3\textwidth]{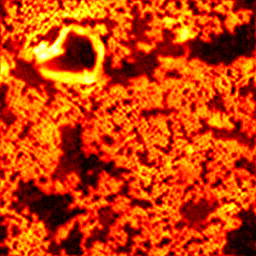}	
	}\\
	\subfigure[$\psi_{\text{GCL-PS}}$]{
	\includegraphics[width=0.3\textwidth]{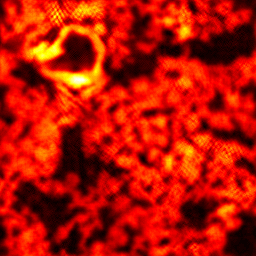}	
	}
	\subfigure[$\psi^{(101)}_{\text{t-PS+AP}}$]{
	\includegraphics[width=0.3\textwidth]{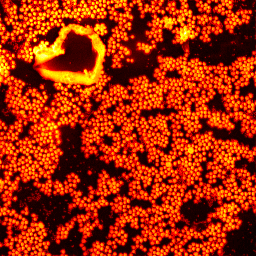}	
	}
	\subfigure[$\psi^{(101)}_{\text{CGL-PS+AP}}$]{
	\includegraphics[width=0.3\textwidth]{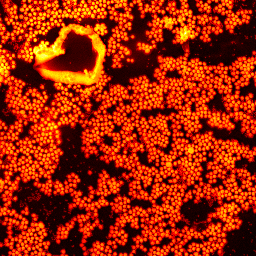}	
	}\\
	\subfigure[AP]{
	\includegraphics[width=0.3\textwidth,clip,bb=50 200 600 600]{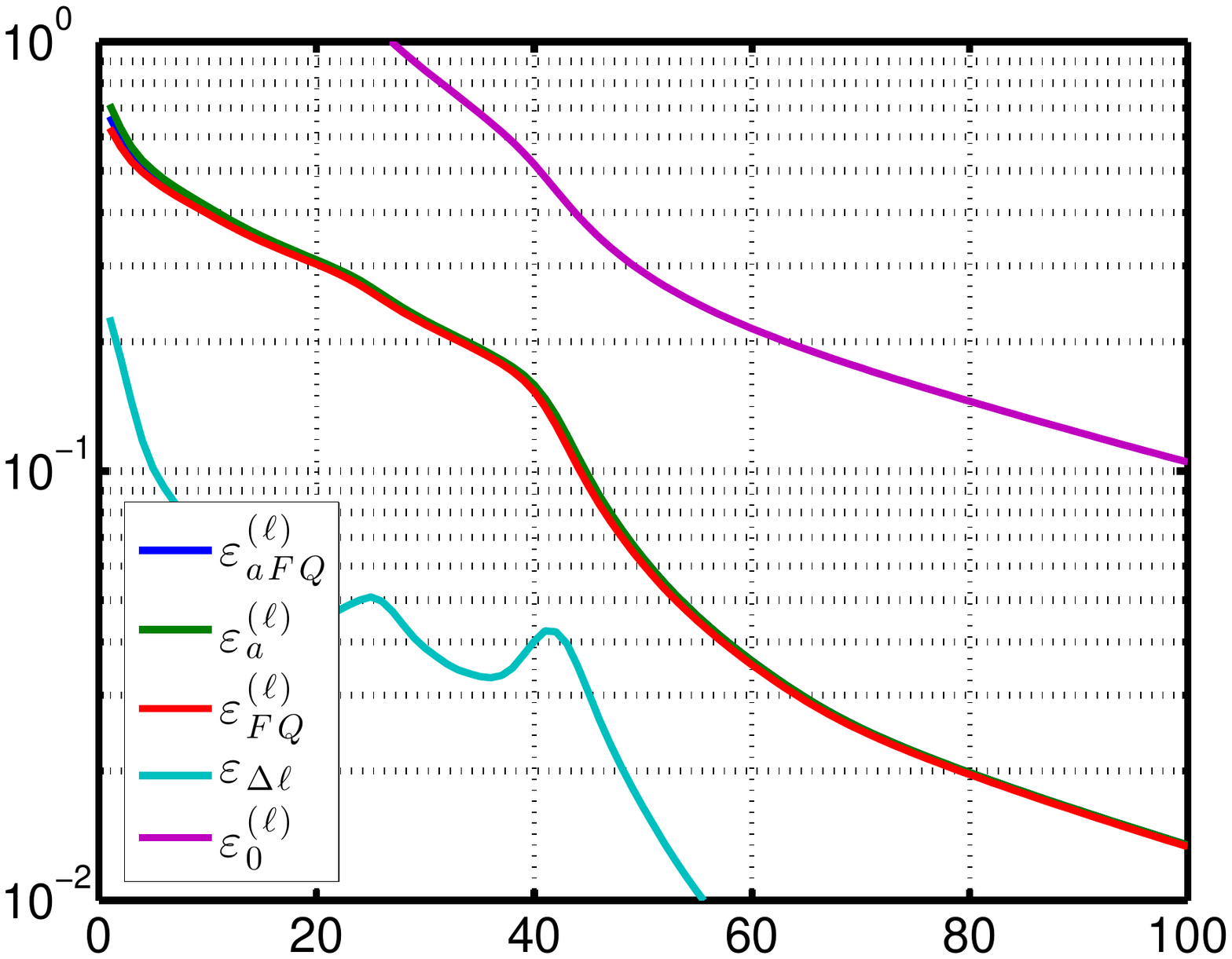}
	}	
	\subfigure[t-PS+AP]{
	\includegraphics[width=0.3\textwidth,clip,bb=50 200 600 600]{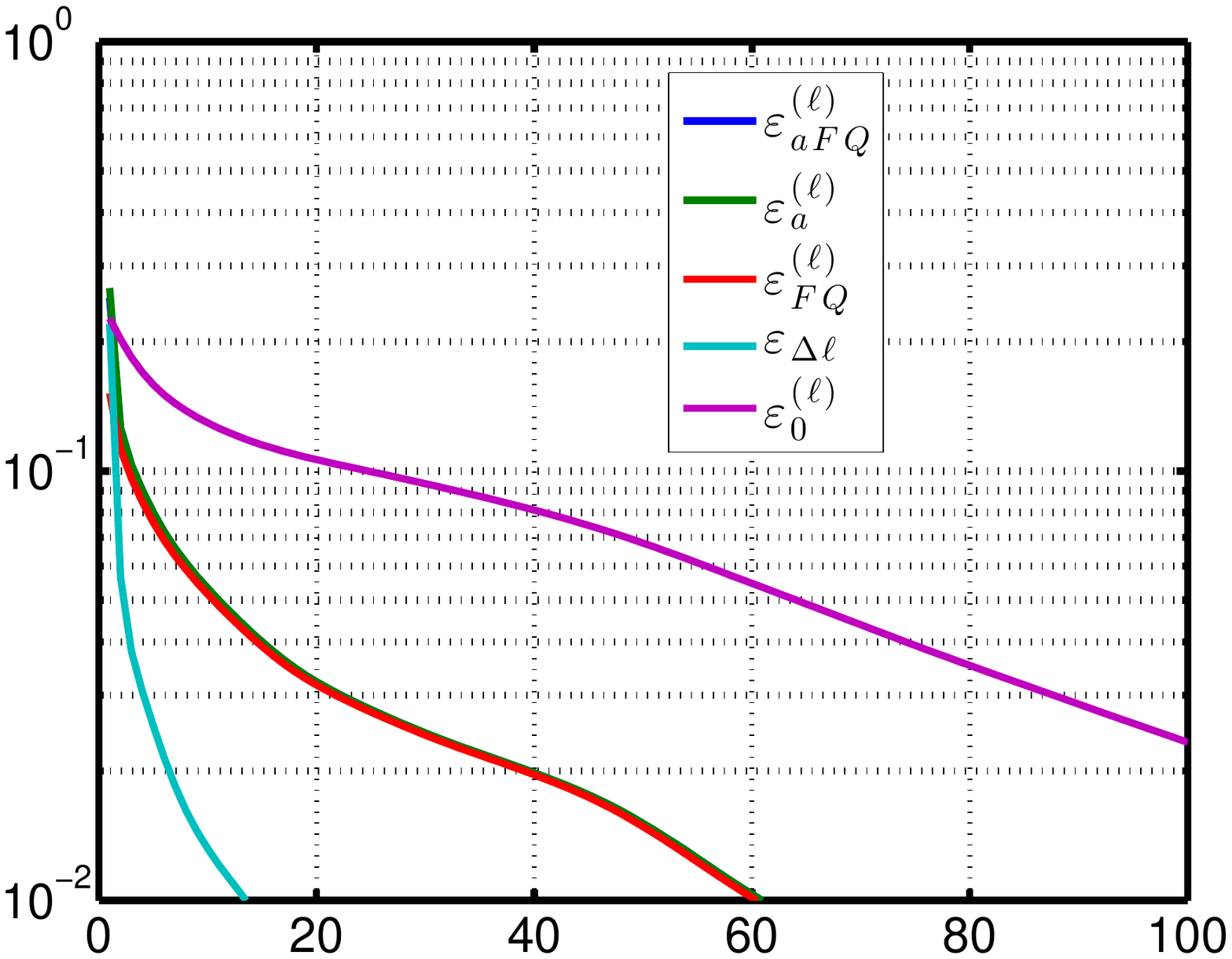}	
}	
	\subfigure[GCL-PS+AP]{
	\includegraphics[width=0.3\textwidth,clip,bb=50 200 600 600]{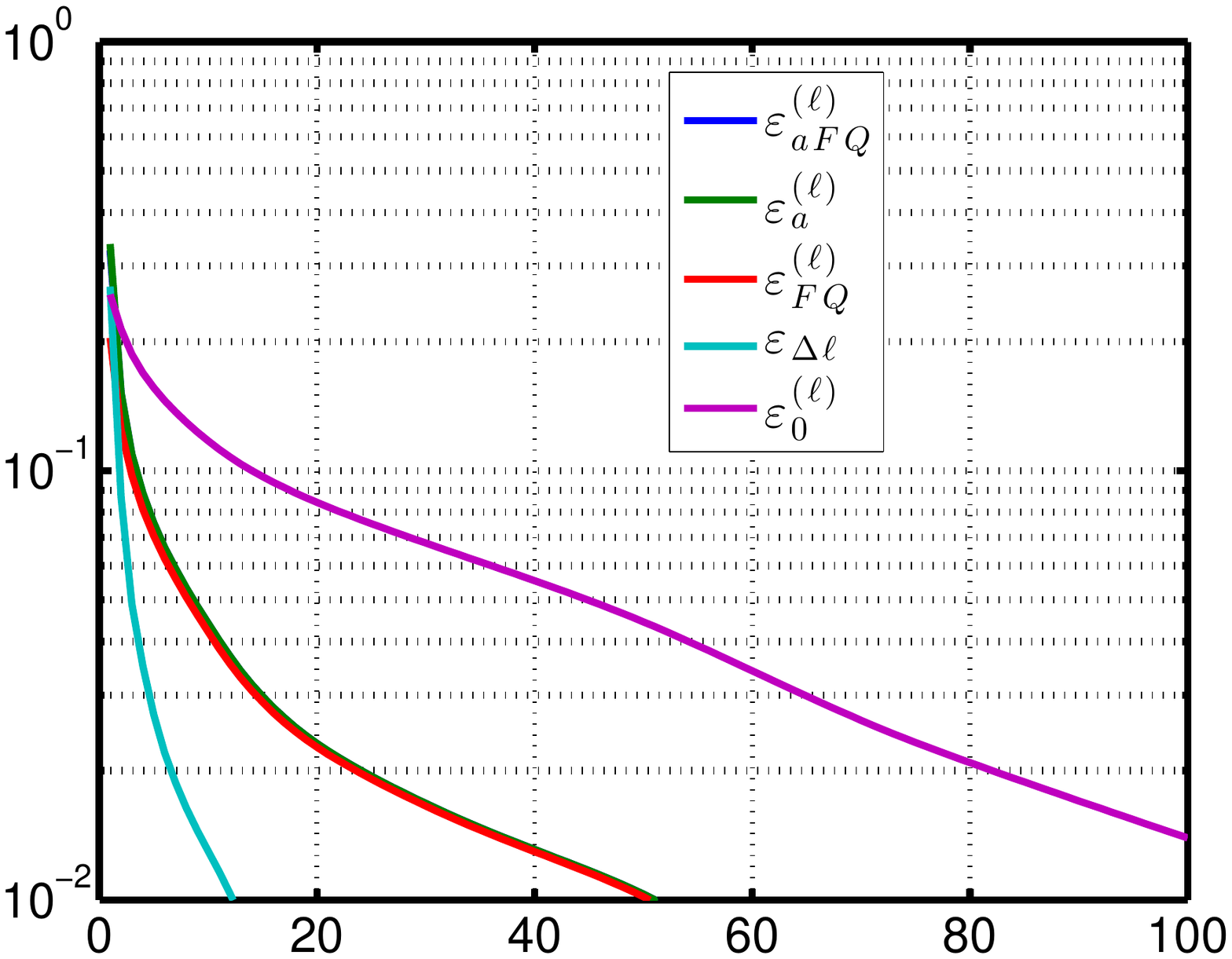}	
	}
	}
\caption{{Results on the gold ball image of size $256\times 256$ with $\omega_{\text{s}}$ lens and the illumination scheme described in the content ($\Delta x=\Delta y=8$ with perturbation). For the t-PS algorithm, we set $\epsilon_a$ so that it selects $80\%$ of the highest values of $\va$. (a) the ground truth; (b) $\psi^{(1)}_{\text{AP}}$; (c) $\psi_{\text{t-PS}}$; (d) $\psi_{\text{GCL-PS}}$; (e) $\psi^{(101)}_{\text{t-PS+AP+AP}}$; (f) $\psi^{(101)}_{\text{GCL-PS+AP}}$; (g) convergence of AP with a random start; (h) convergence of $\text{AP}$ with the t-PS start; (i) convergence of $\text{AP}$ with the GCL-PS start.}}
\label{fig:test2}
\end{figure*}

\begin{figure*}[t]{
	\subfigure[$\psi_{\text{t-PS}}$ with $\omega_{s}$]{
		\includegraphics[width=0.25\textwidth]{barbara_small_PST.png}	
		}
	\subfigure[AP]{
		\includegraphics[width=0.33\textwidth,clip,bb=50 200 600 600]{barbara_small_AP_plot.pdf}	
		}
	\subfigure[t-PS+AP]{
		\includegraphics[width=0.33\textwidth,clip,bb=50 200 600 600]{barbara_small_PSTAP_plot.pdf}	
		}\\
	\subfigure[$\psi_{\text{t-PS}}$ with $\omega_{\text{BLR}}$]{
		\includegraphics[width=0.25\textwidth]{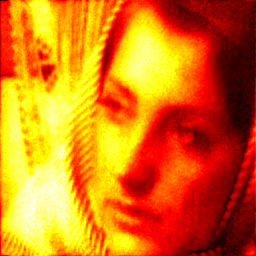}	
	}
	\subfigure[AP]{
		\includegraphics[width=0.33\textwidth,clip,bb=50 200 600 600]{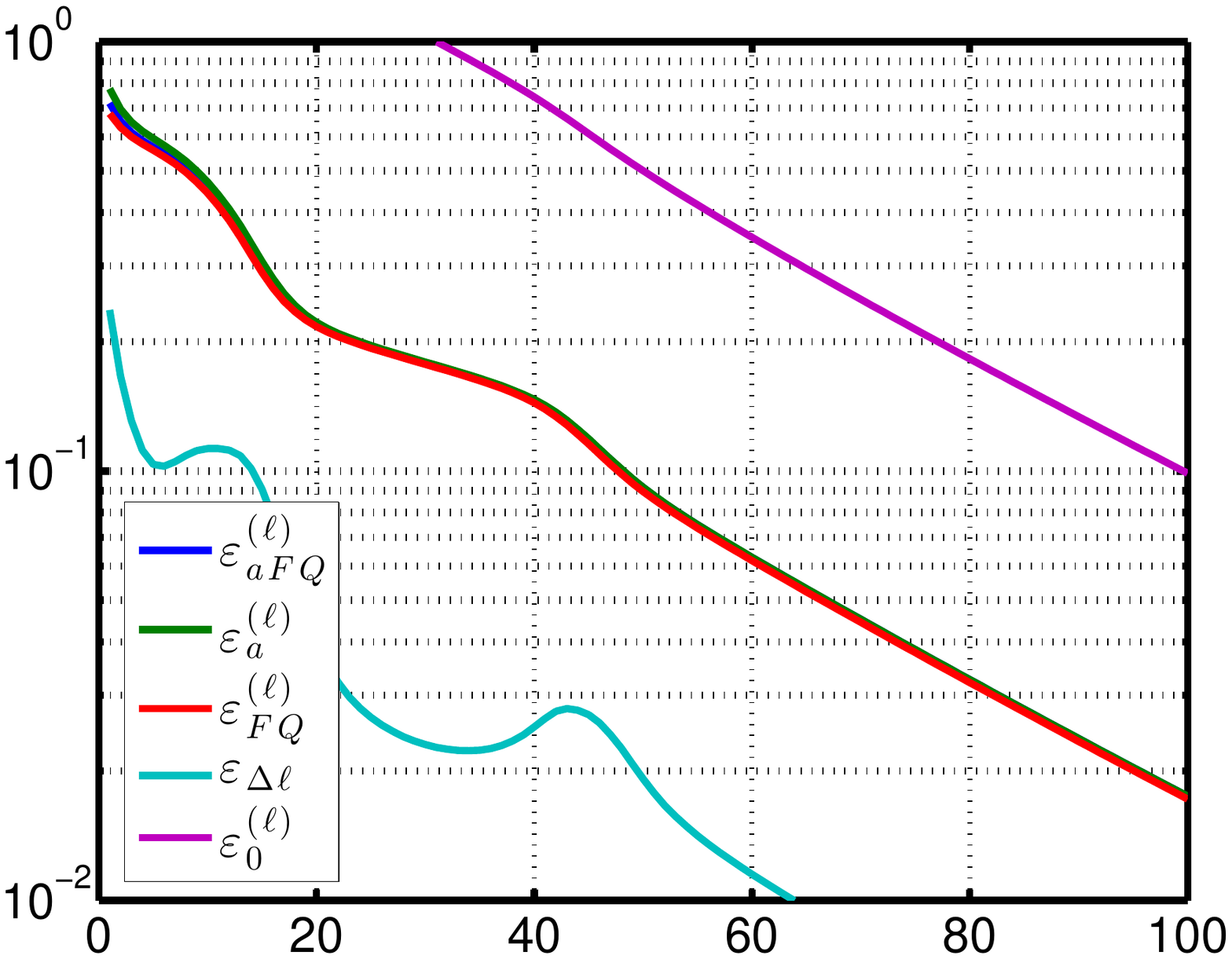}	
		}
	\subfigure[t-PS+AP]{
		\includegraphics[width=0.33\textwidth,clip,bb=50 200 600 600]{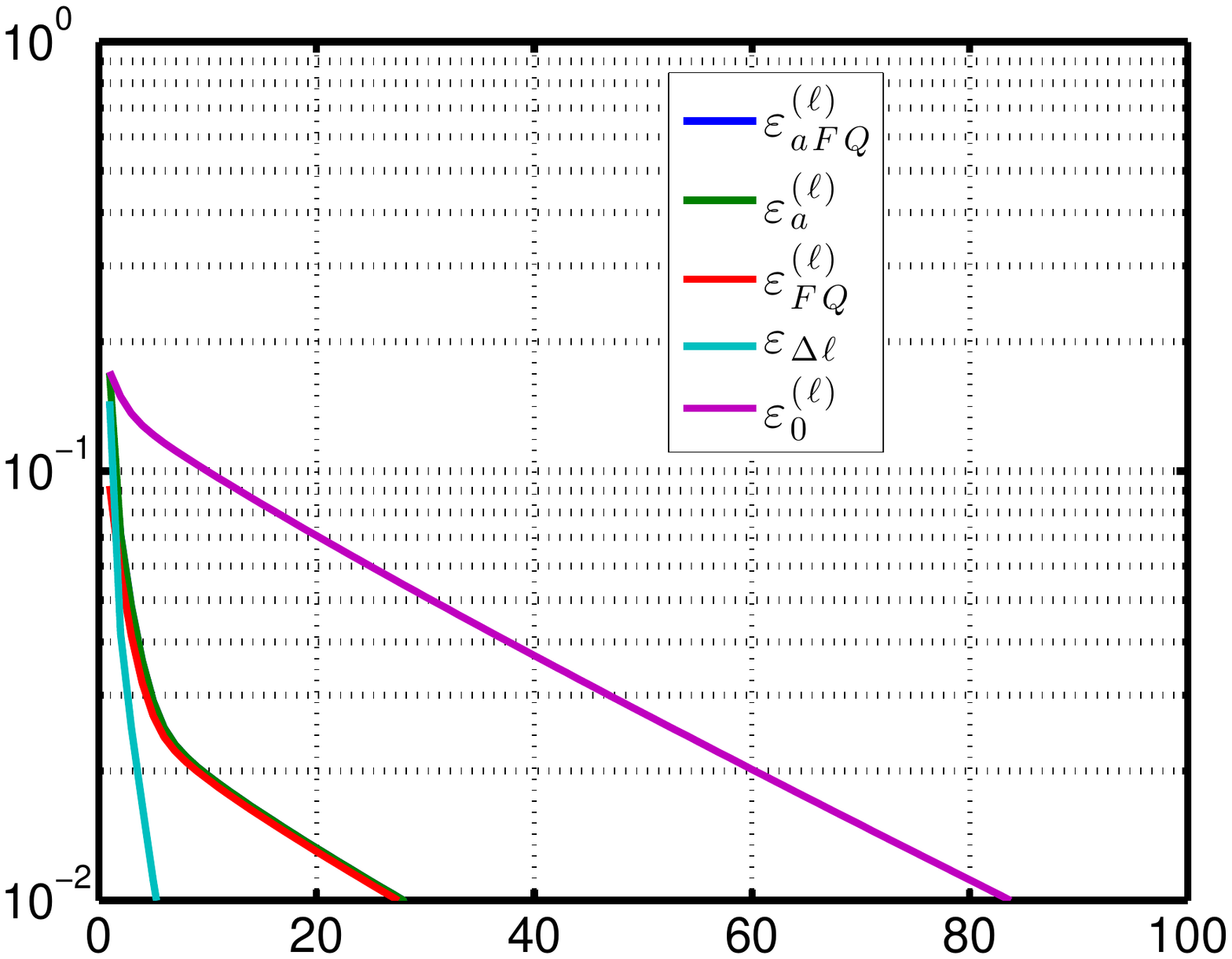}	
		}\\
	\subfigure[$\psi_{\text{GCL-PS}}$ with $\omega_{\text{BLR}}$]{
		\includegraphics[width=0.25\textwidth]{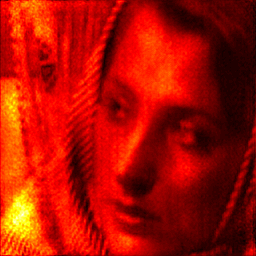}	
	}
	\subfigure[truth]{
		\includegraphics[width=0.25\textwidth]{barbara_small_truth.png}	
	}
	\subfigure[GCL-PS+AP]{
		\includegraphics[width=0.33\textwidth,clip,bb=50 200 600 600]{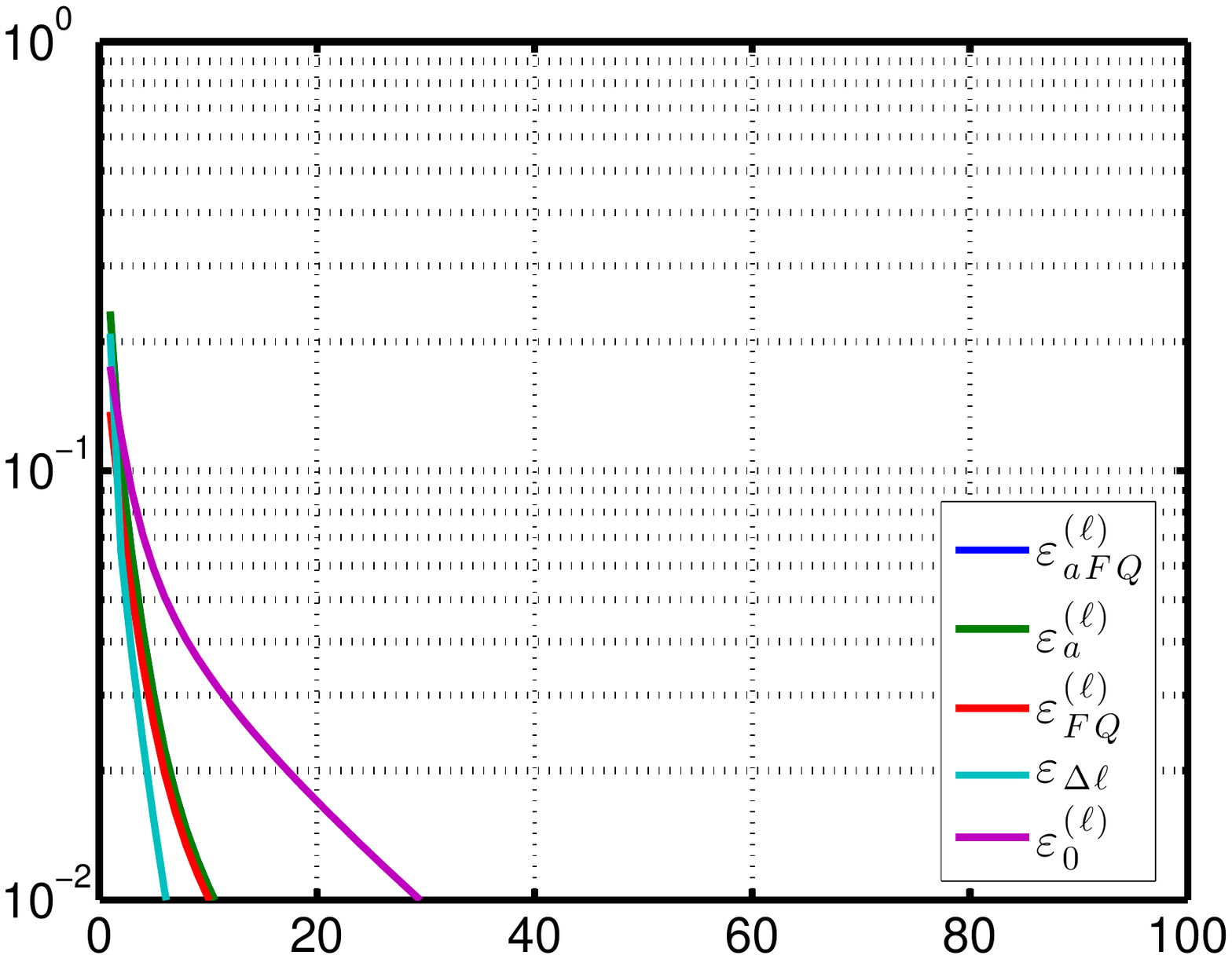}	
		}
	} 
   \caption{{Comparison of lens $\omega_{\text{s}}$ and lens $\omega_{\text{BLR}}$ on the Barbara image of size $256\times 256$ with $\omega_{\text{s}}$ lens and the illumination scheme described in the content ($\Delta x=\Delta y=8$ with perturbation). For the t-PS algorithm, we set $\epsilon_a$ so that it selects $98\%$ of the highest values of $\va$. The top row is the result with lens $\omega_{\text{s}}$; from left to right: the $\psi_{\text{t-PS}}$, the convergence of the AP algorithm, and the convergence of AP+t-PS algorithm. The middle row is the result with lens $\omega_{\text{BLR}}$; from left to right: the $\psi_{\text{t-PS}}$, the convergence of AP with a random start, and the convergence of AP with the t-PS start. The bottom row, from left to right: the $\psi_{\text{GCL-PS}}$, the ground truth, and the convergence of AP with the GCL-PS start.}}
\label{fig:probes_convergence_barbara}
\end{figure*}

\begin{figure*}[t]{
	\subfigure[$\psi_{\text{t-PS}}$ with $\omega_{\text{s}}$]{
	\includegraphics[width=0.25\textwidth]{goldballs_small_PST.png}	
	}
		\subfigure[AP]{
		\includegraphics[width=0.33\textwidth,clip,bb=50 200 600 600]{goldballs_small_AP_plot.pdf}	
		}
		\subfigure[t-PS+AP]{
		\includegraphics[width=0.33\textwidth,clip,bb=50 200 600 600]{goldballs_small_PSTAP_plot.pdf}	
		}\\
		\subfigure[$\psi_{\text{t-PS}}$ with $\omega_{\text{BLR}}$]{
	\includegraphics[width=0.25\textwidth]{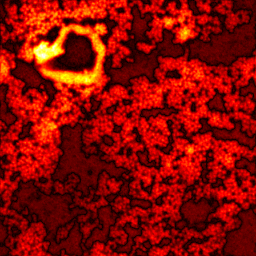}	
	}
		\subfigure[AP]{
		\includegraphics[width=0.33\textwidth,clip,bb=50 200 600 600]{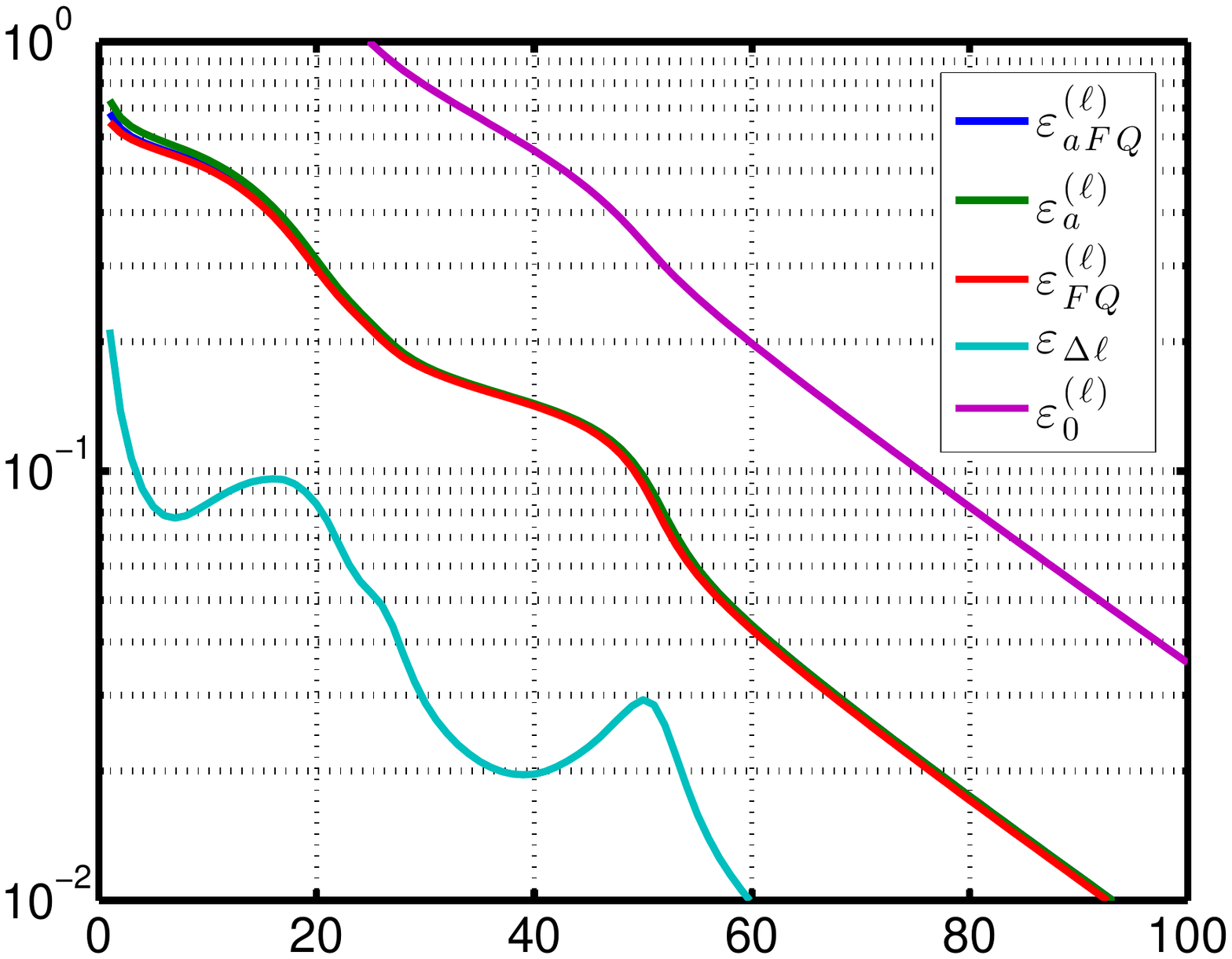}	
		}
		\subfigure[t-PS+AP]{
		\includegraphics[width=0.33\textwidth,clip,bb=50 200 600 600]{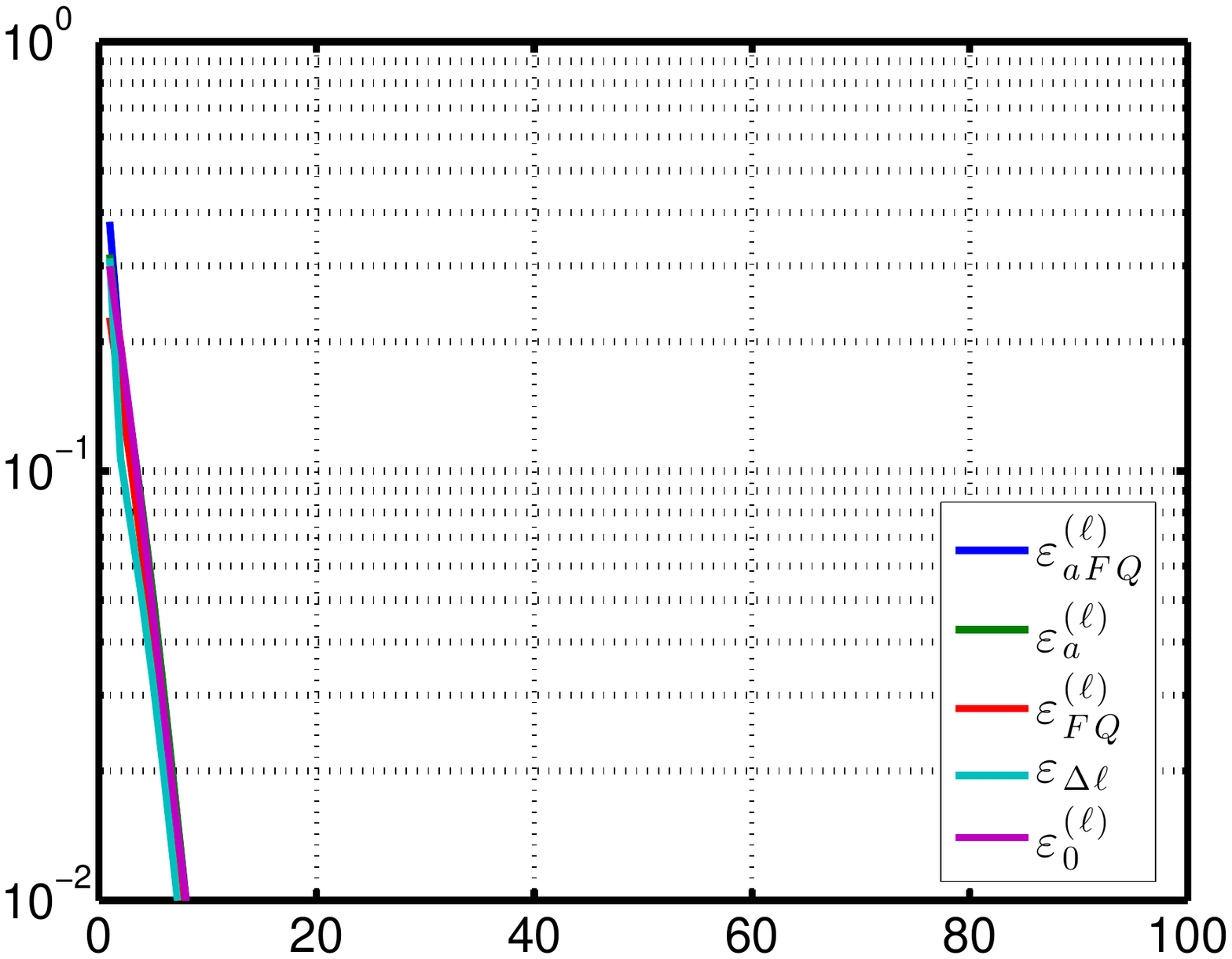}	
		}\\
		\subfigure[$\psi_{\text{GCL-PS}}$ with $\omega_{\text{BLR}}$]{
	\includegraphics[width=0.25\textwidth]{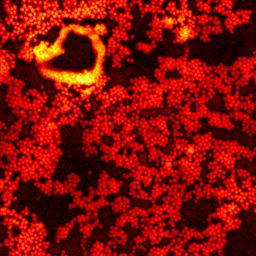}	
	}
	\subfigure[truth]{
	\includegraphics[width=0.25\textwidth]{goldballs_small_truth.png}	
	}
\subfigure[GCL-PS+AP]{
		\includegraphics[width=0.33\textwidth,clip,bb=50 200 600 600]{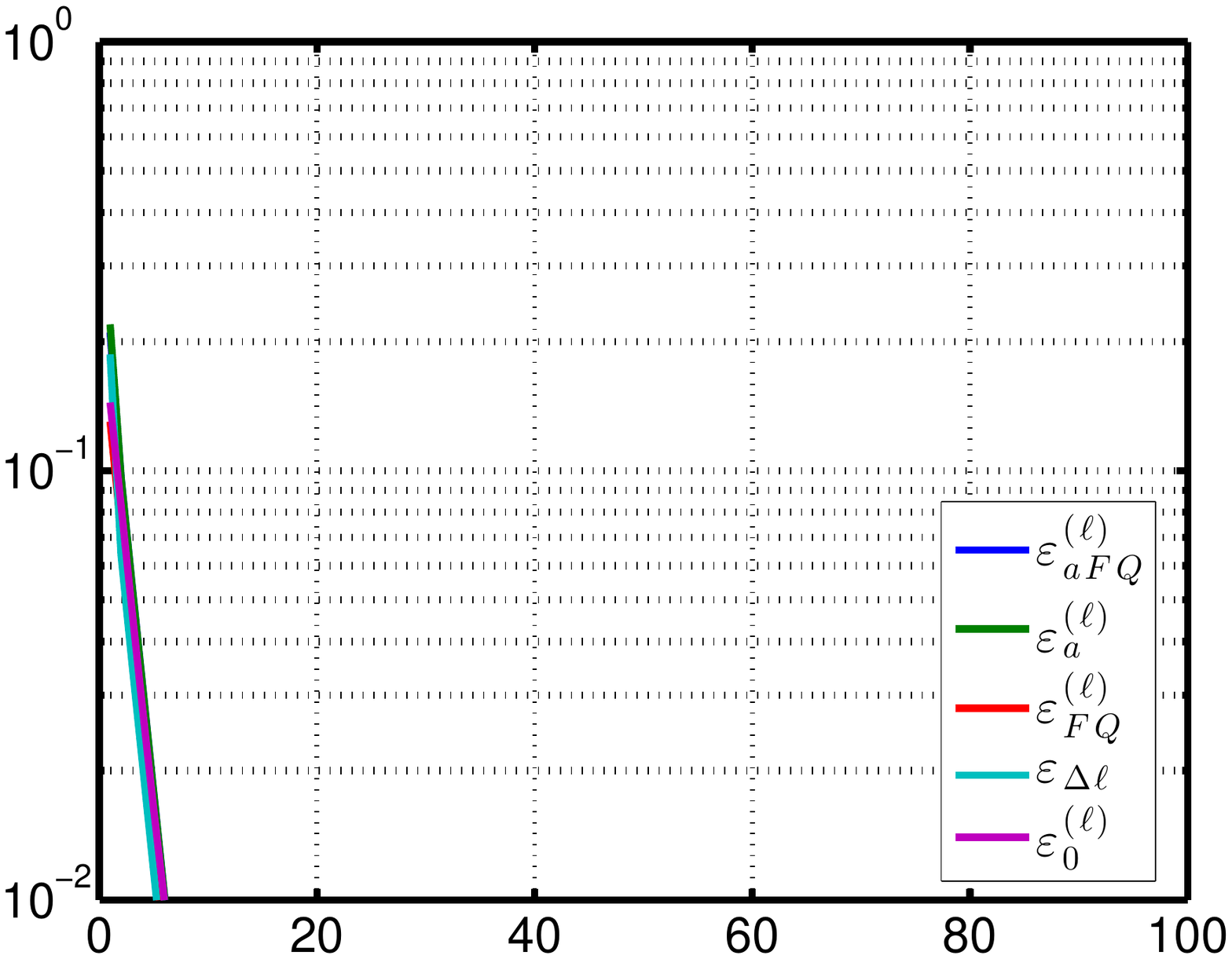}	
		} } 
   \caption{{Comparison of lens $\omega_{\text{s}}$ and lens $\omega_{\text{BLR}}$ on the gold ball image of size $256\times 256$ with $\omega_{\text{s}}$ lens and the illumination scheme described in the content ($\Delta x=\Delta y=8$ with perturbation). For the t-PS algorithm, we set $\epsilon_a$ so that it selects $80\%$ of the highest values of $\va$. The top row is the result with lens $\omega_{\text{s}}$; from left to right: the $\psi_{\text{t-PS}}$, the convergence of the AP algorithm, and the convergence of AP+t-PS algorithm. The middle row is the result with lens $\omega_{\text{BLR}}$; from left to right: the $\psi_{\text{t-PS}}$, the convergence of AP with a random start, and the convergence of AP with the t-PS start. The bottom row, from left to right: the $\psi_{\text{GCL-PS}}$, the ground truth, and the convergence of AP with the GCL-PS start.}}
\label{fig:probes_convergence_goldballs}
\end{figure*}

\begin{figure*}[t]{
\subfigure[truth]{
	\includegraphics[width=0.33\textwidth]{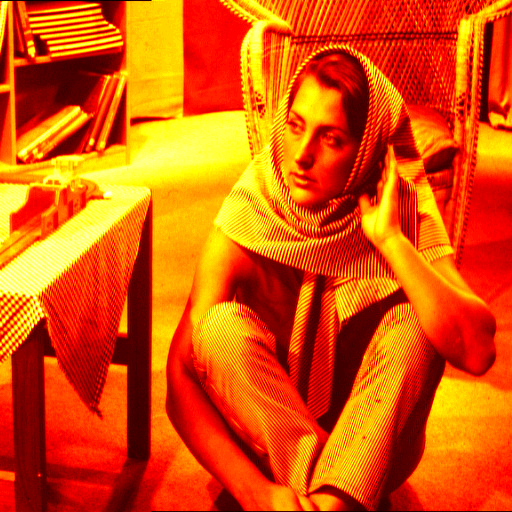}	
	}
		\subfigure[$\psi_{\text{t-PS}}$]{
		\includegraphics[width=0.33\textwidth]{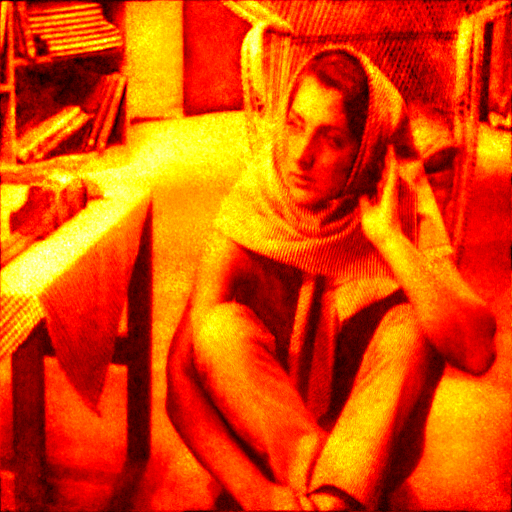}	
		}\\
		\subfigure[$\psi^{(101)}_{\text{AP}}$]{
		\includegraphics[width=0.33\textwidth]{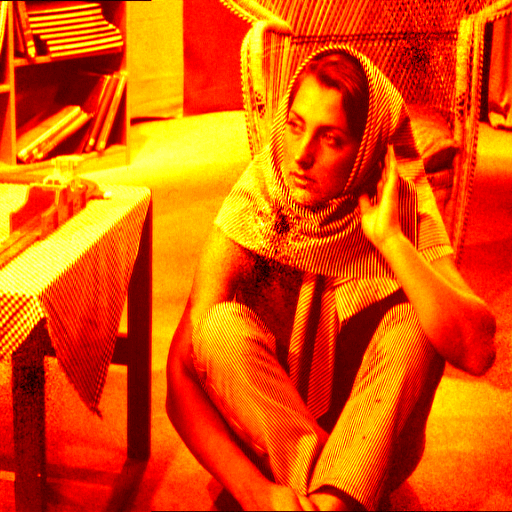}	
		}
		\subfigure[$\psi^{(101)}_{\text{t-PS+AP}}$]{
	\includegraphics[width=0.33\textwidth]{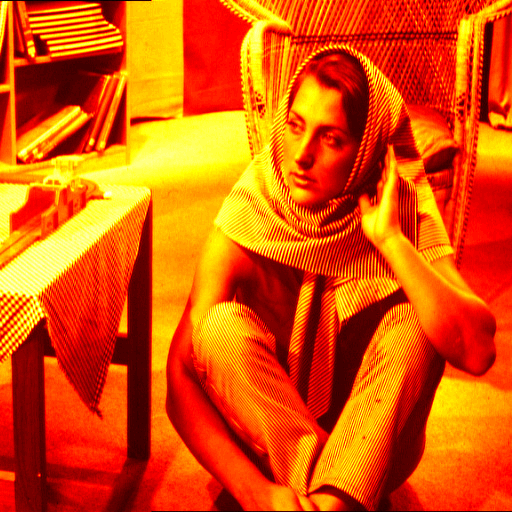}	
	}\\
		\subfigure[AP]{
		\includegraphics[width=0.33\textwidth,clip,bb=50 200 600 600]{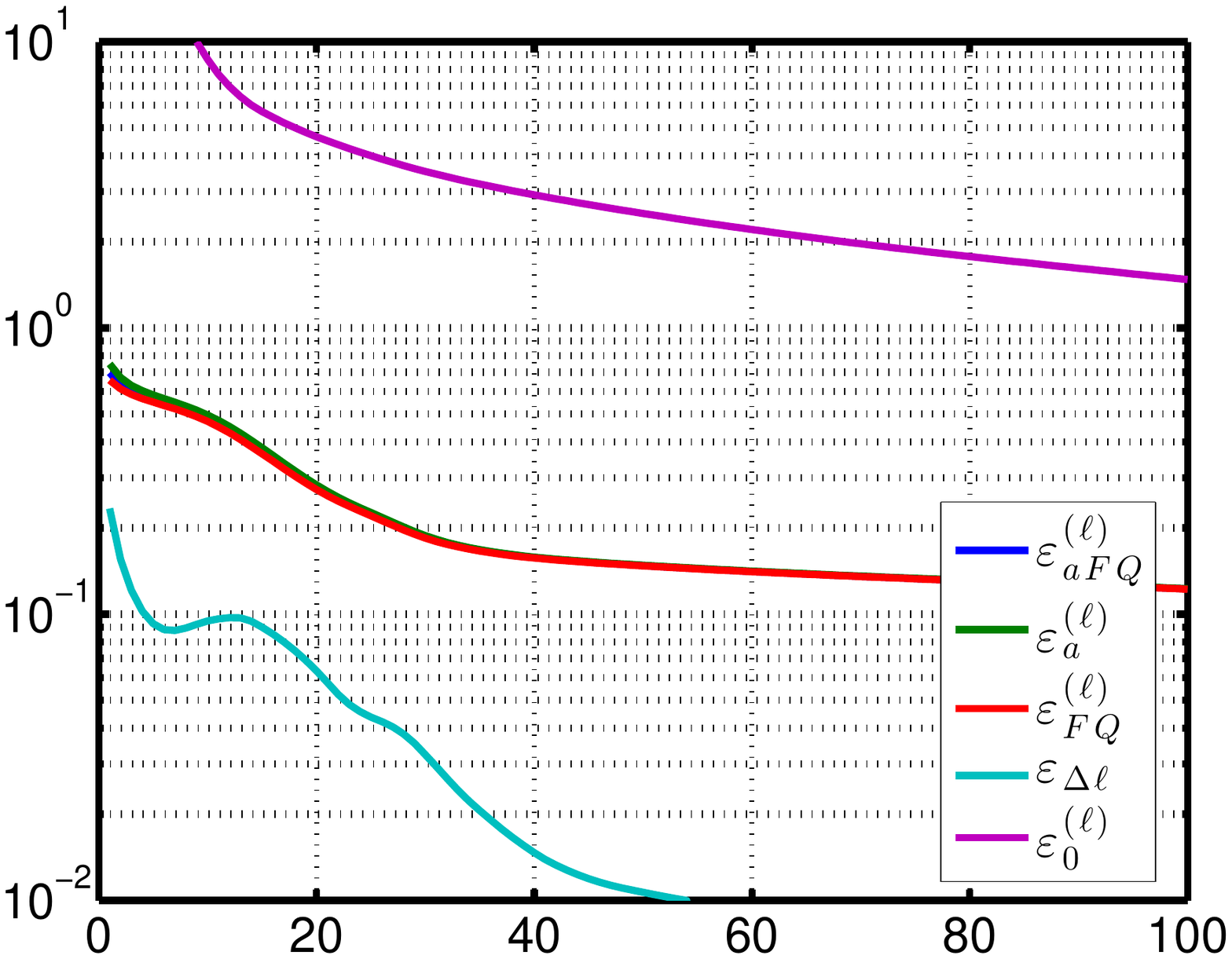}	
		}
		\subfigure[t-PS+AP]{
		\includegraphics[width=0.33\textwidth,clip,bb=50 200 600 600]{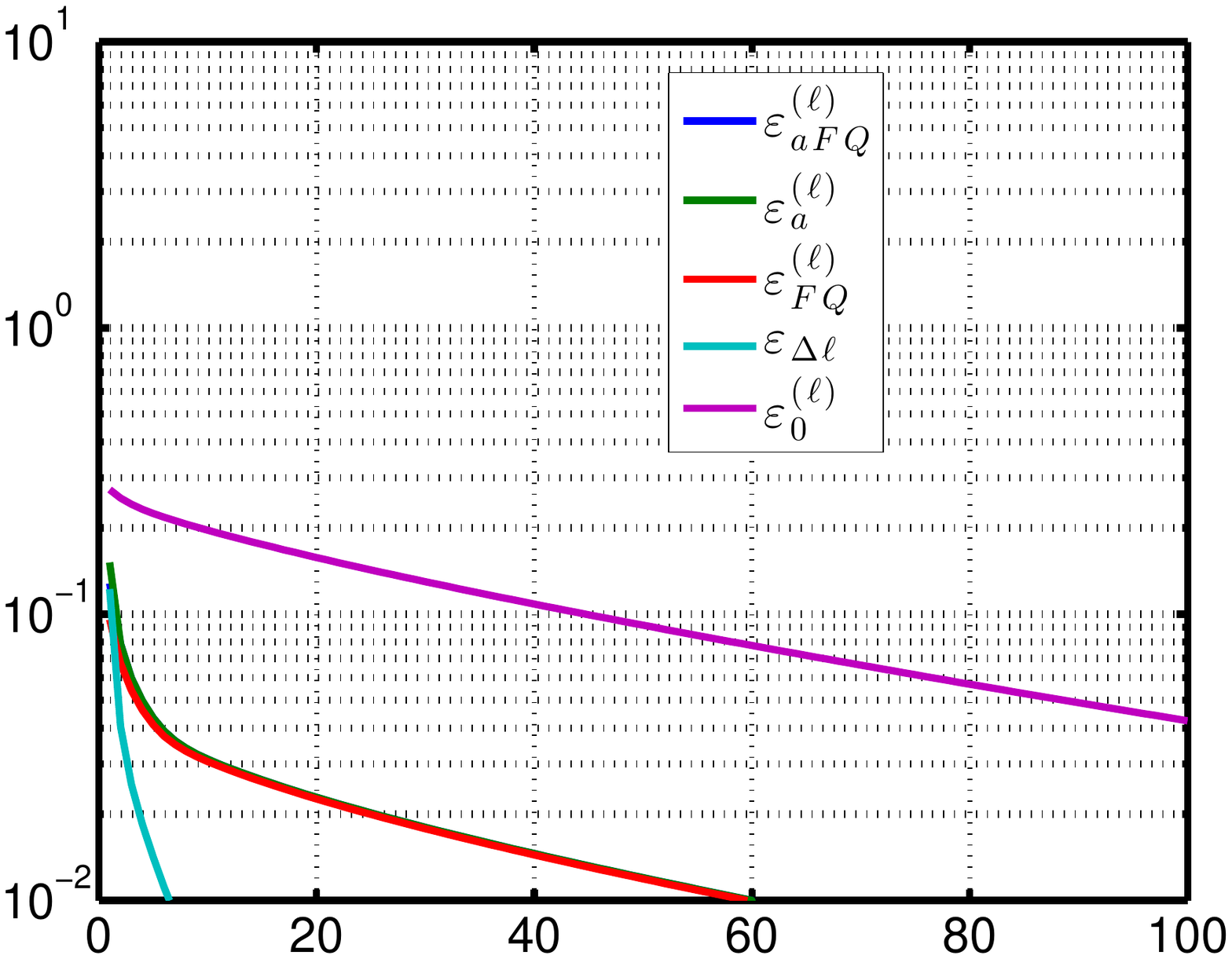}	
		} } 
    \caption{{Results on a larger object. The object of interest is the Barbara image of size $512\times 512$ with $\omega_{\text{BLR}}$ lens and the illumination scheme described in the content ($\Delta x=\Delta y=16$ with perturbation). For the t-PS algorithm, we set $\epsilon_a$ so that it selects $80\%$ of the highest values of $\va$. (a) ground truth; (b) $\psi_{\text{t-PS}}$; (c) $\psi^{(101)}_{\text{AP}}$; (d) $\psi^{(101)}_{\text{t-PS+AP}}$; (e) convergence of AP with random start; (f) convergence of AP with t-PS start. Notice that AP alone produces a hole in the scarf, which may lead the viewer to the wrong interpretation.}}\label{fig:barbara}
\end{figure*}

\begin{figure*}[t]{
		\subfigure[AP]{
		\includegraphics[width=0.3\textwidth,clip,bb=50 200 600 600]{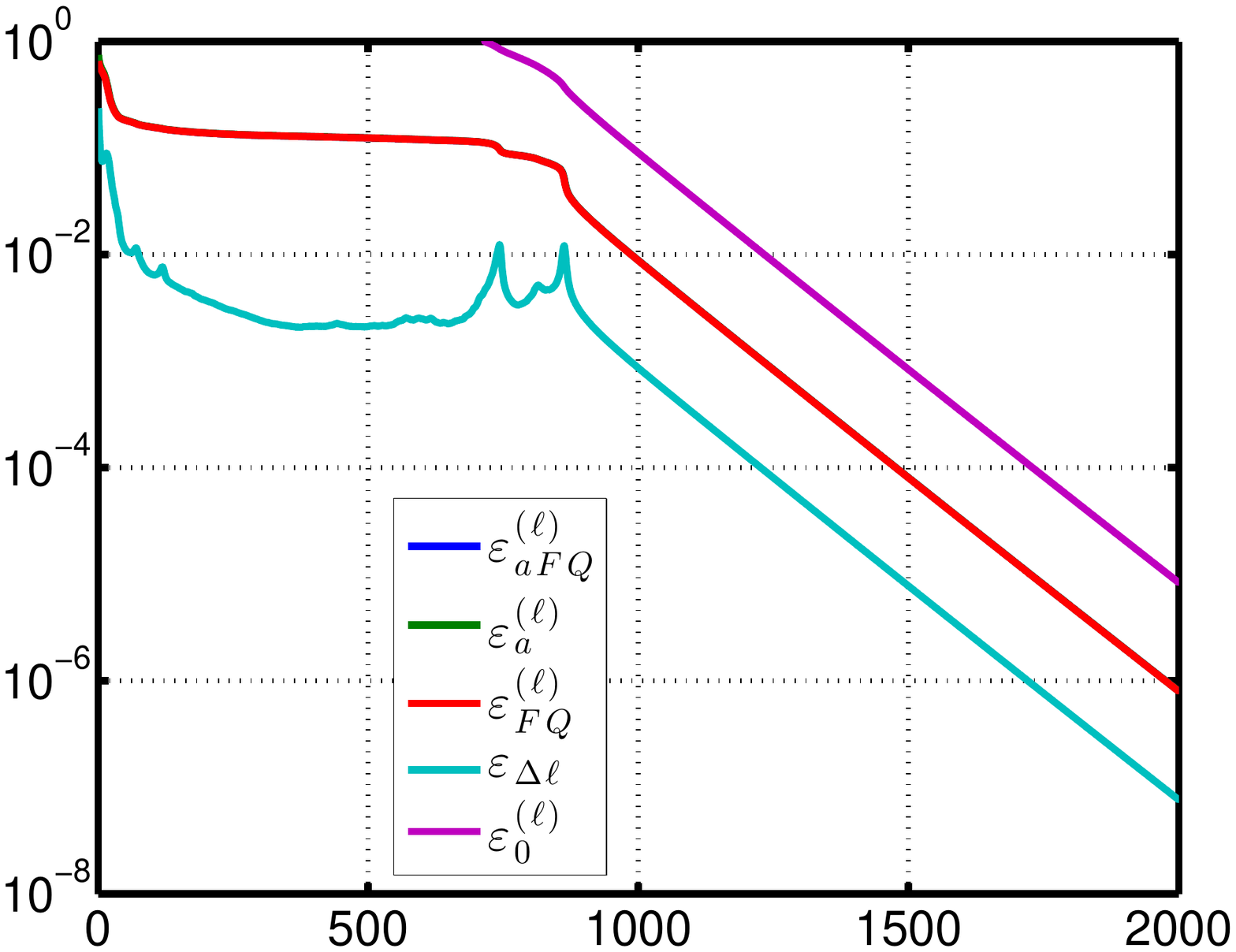}	
		}
		\subfigure[RAAR]{
		\includegraphics[width=0.3\textwidth,clip,bb=50 200 600 600]{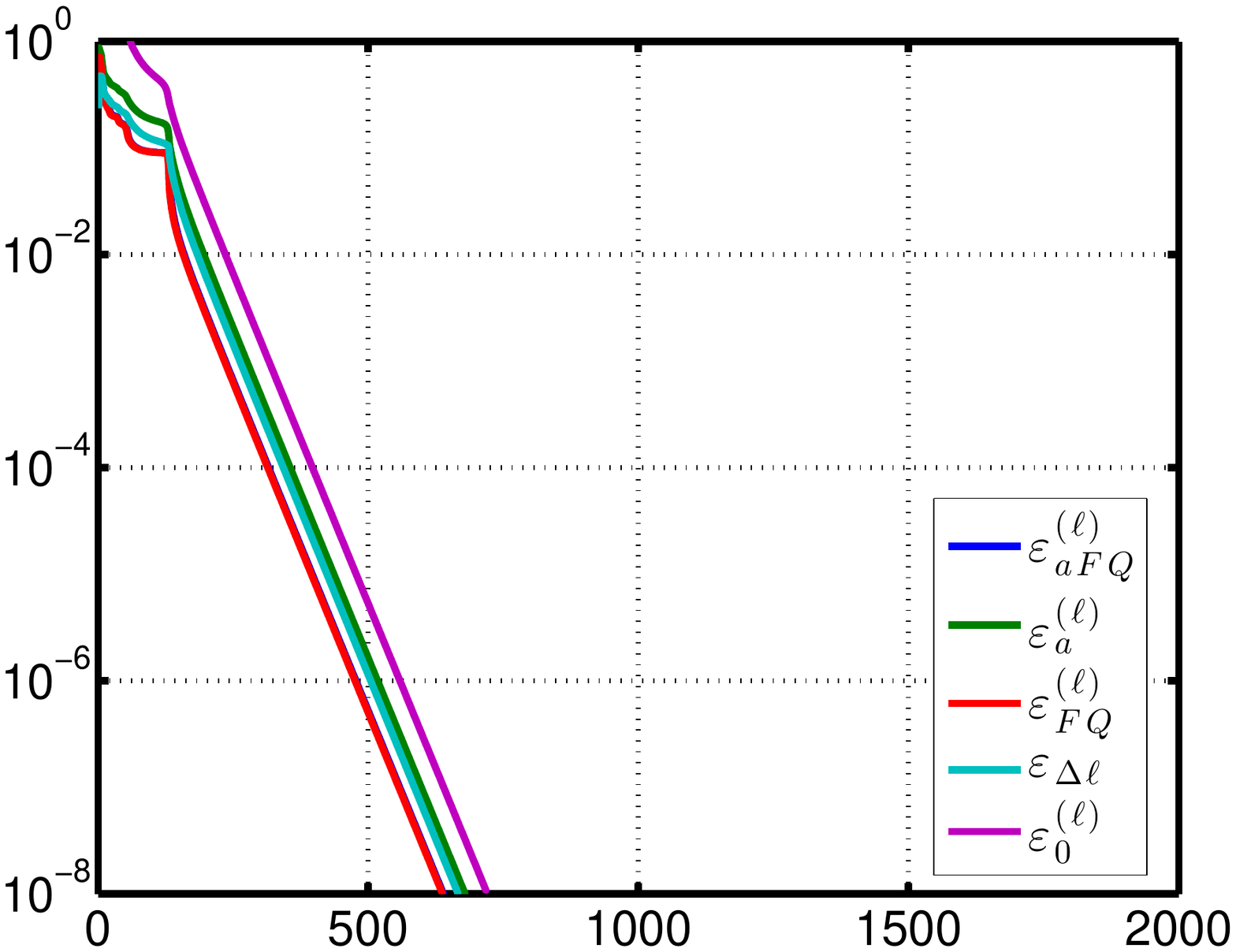}	
		}\\
		\subfigure[t-PS+AP]{
		\includegraphics[width=0.3\textwidth,clip,bb=50 200 600 600]{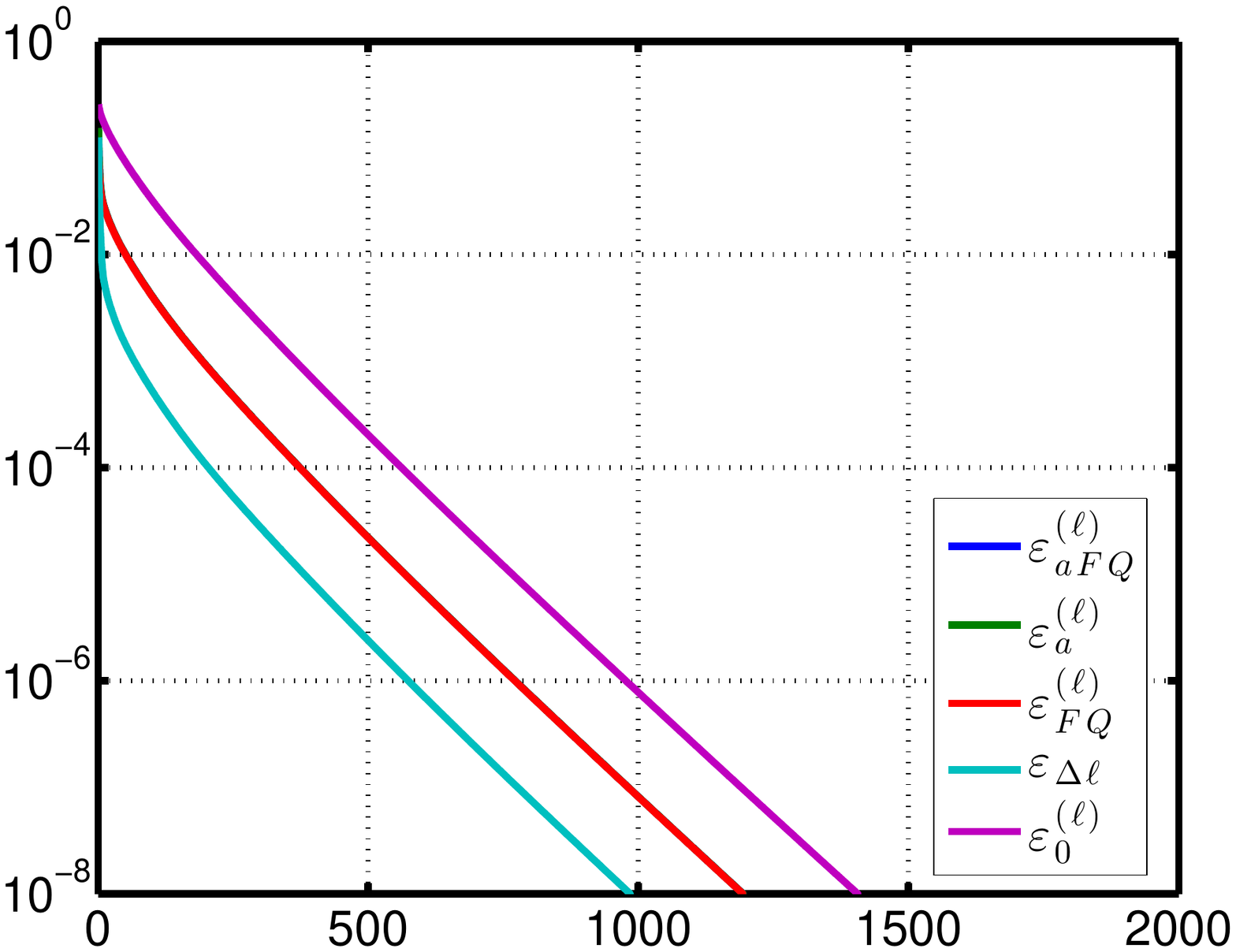}	
		}
		\subfigure[t-PS+RAAR]{
		\includegraphics[width=0.3\textwidth,clip,bb=50 200 600 600]{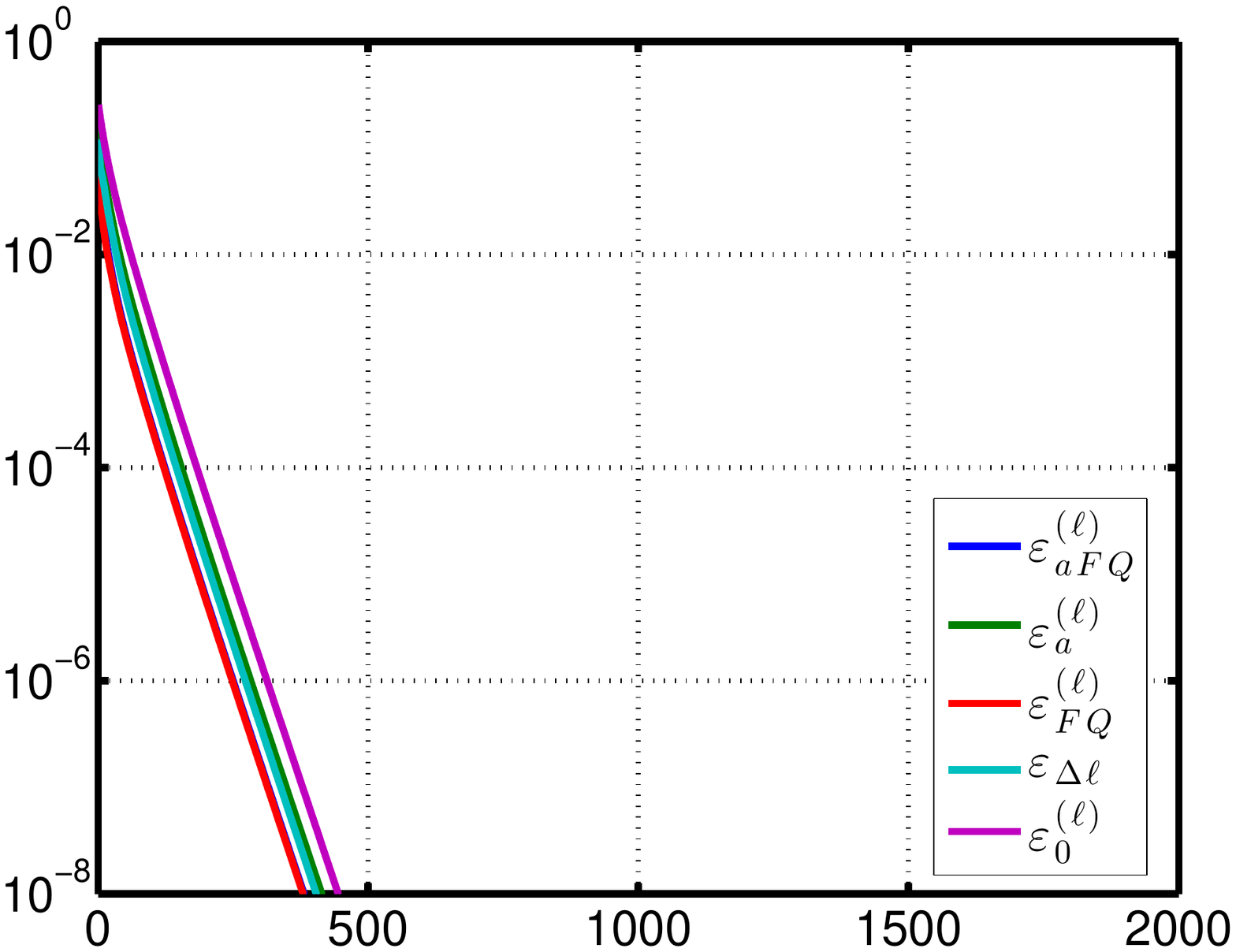}	
		}\\
		\subfigure[GCL-PS+AP]{
		\includegraphics[width=0.3\textwidth,clip,bb=50 200 600 600]{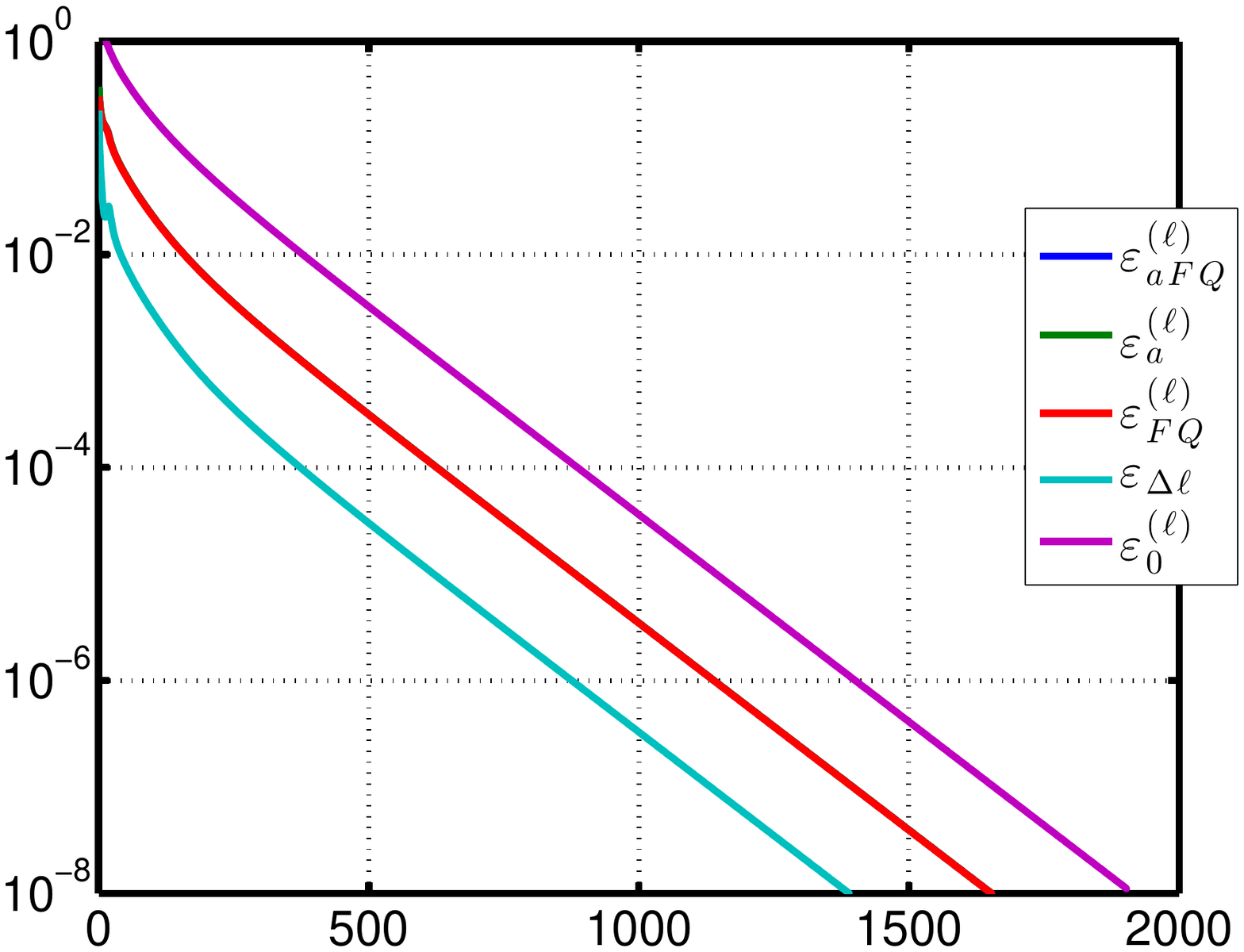}	
		}\subfigure[t-PS+syncro-RAAR]{\label{fig:sinchroraar}
		\includegraphics[width=0.3\textwidth,clip,bb=50 200 600 600]{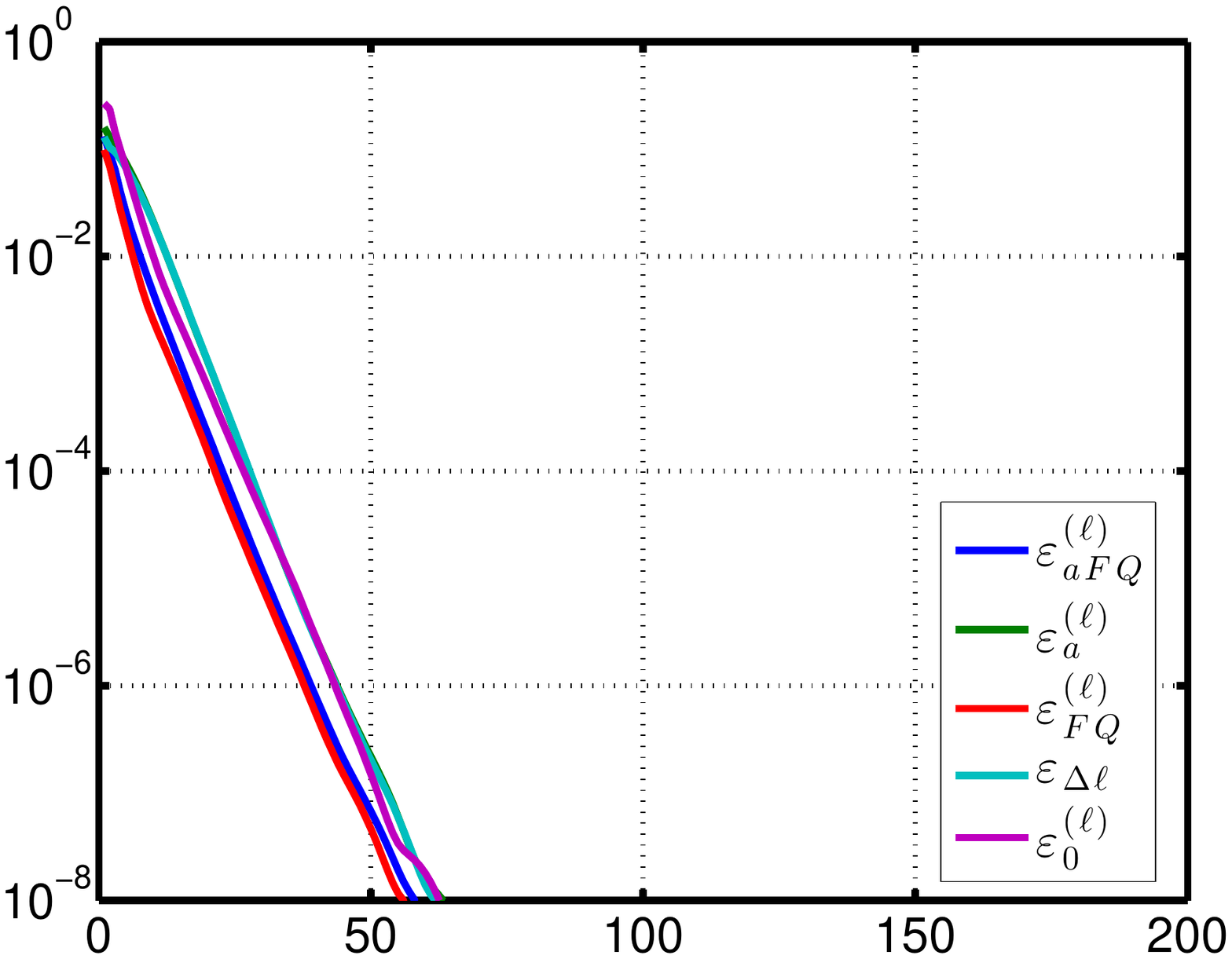}	
		}
		\subfigure[GCL-PS+syncro-CG]{
		\includegraphics[width=0.3\textwidth,clip,bb=50 200 600 600]{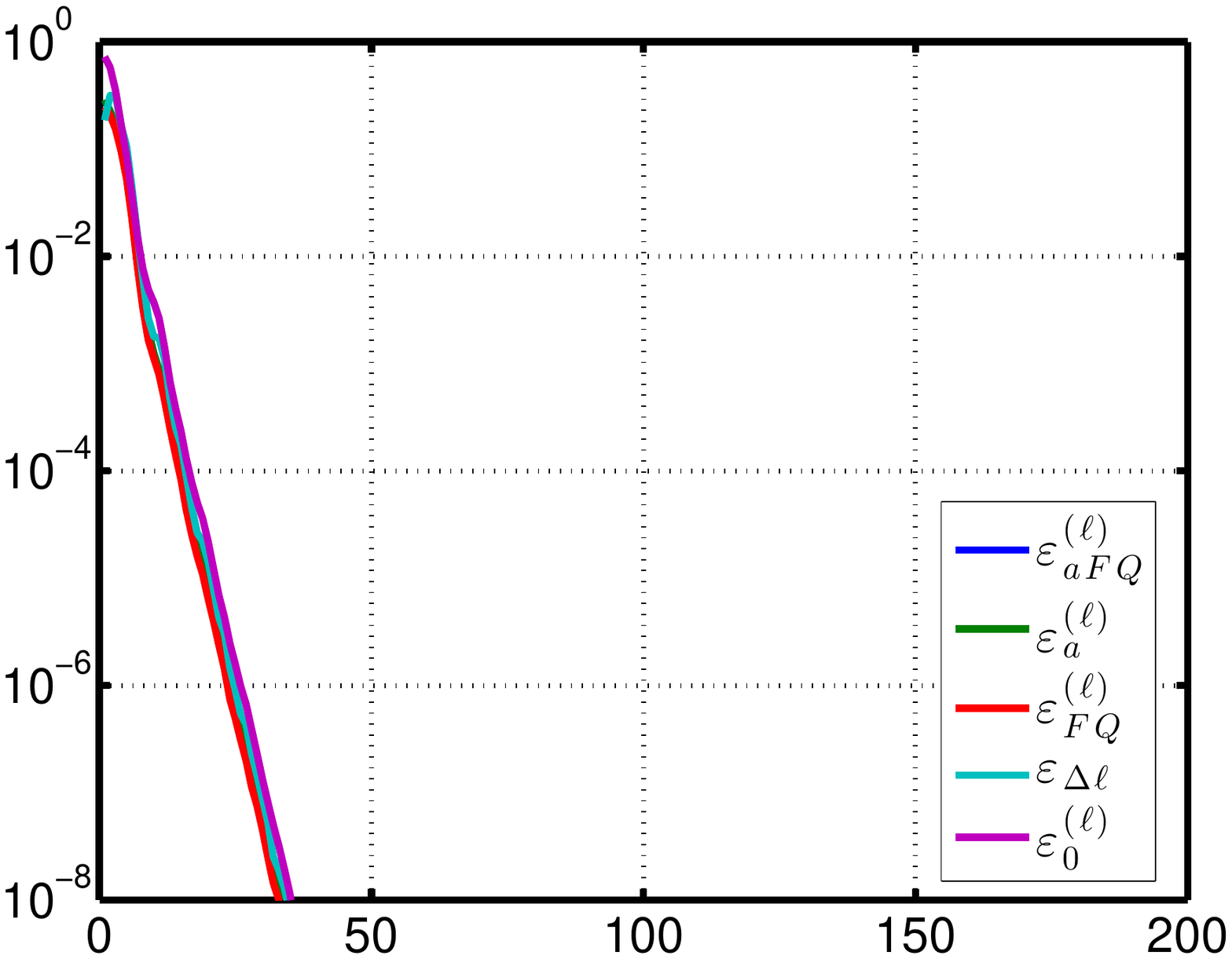}	
		} } 
   \caption{Convergence rate for different algorithms applied to the Barbara image of size $512\times 512$ with $\omega_{\text{BLR}}$ lens and the illumination scheme described in the content ($\Delta x=\Delta y=16$ with perturbation). For the t-PS algorithm, we set $\epsilon_a$ so that it selects $80\%$ of the highest values of $\va$. (a-b)  random start. Note that (a,b) it is the zoom out figure of subfigure (e,f) in Figure \ref{fig:barbara};  (d) t-PS start; (e)  GCL-PS start; (f)  t-PS start+synchro-RAAR. Notice the change of scale in the last plots(e-f), where convergence is over 40-80$\times$ faster than the AP algorithm and is about $10-20\times$ faster than the RAAR algorithm.}
\label{fig:synchonize}
\end{figure*}

\begin{figure*}[t]{
		\subfigure[AP with noise]{
		\includegraphics[width=0.4\textwidth,clip,bb=50 200 600 600]{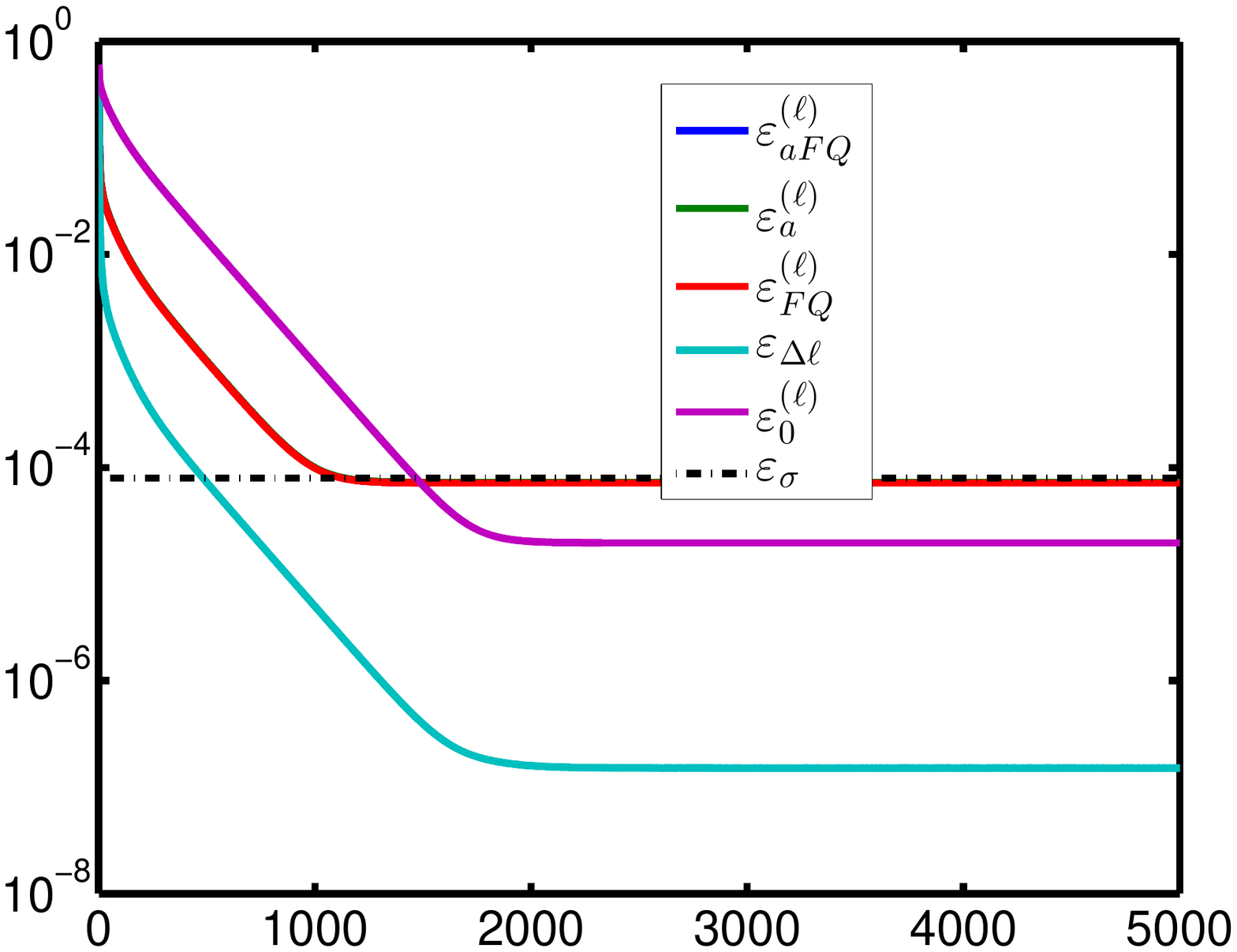}	
		}
		\subfigure[reconstruction error $\varepsilon_0^{(\ell)}$ vs data error $\varepsilon_\sigma$]{
		\includegraphics[width=0.4\textwidth,clip,bb=50 200 600 600]{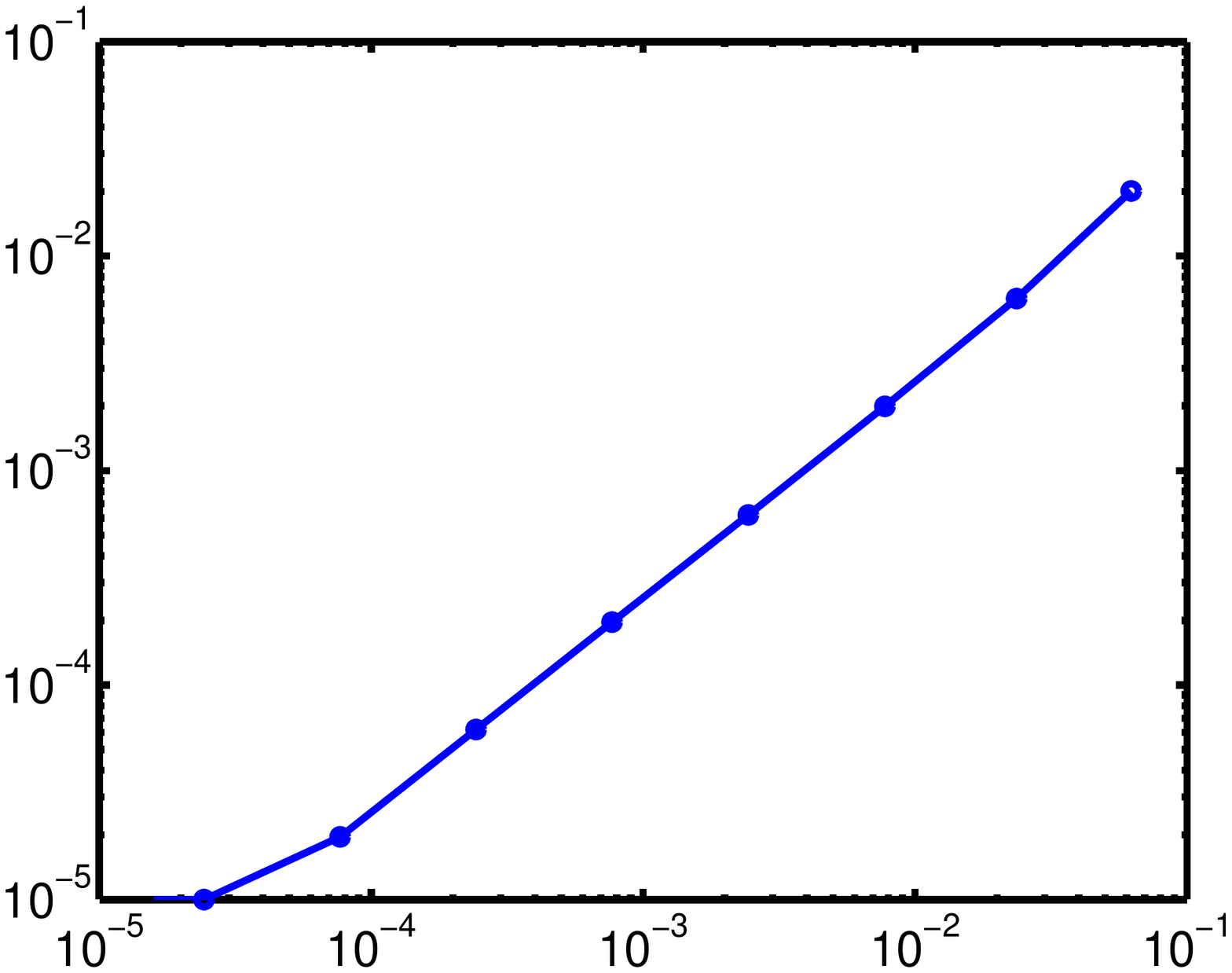}	
		}		
		} 
   \caption{ 
Noisy data is simulated as 
   $\va=\sqrt{|\vF\vQ \psi_0|^2 +\sigma |\vF\vQ \psi_0| |}$, where $\sigma$ is a randomly distributed gaussian noise. 
   we define $\varepsilon_{\sigma}:=\|\va-|\vF \vQ \psi_0|\|/\|\va\|$ 
    (a) Convergence for the AP algorithm with noise  ( $\varepsilon_{a}$, $\varepsilon_{FQ}$ and $\varepsilon_{aFQ}$ overlap on the plot).  
   The black line represents $\|\va-|\vF \vQ \psi_0|\|/\|\va\|$.
   (b)    reconstruction error $\varepsilon_0^{(\ell)}$ vs data error $\varepsilon_\sigma$.
   The lower bound is limited by numerical precision. }
\label{fig:noise}
\end{figure*}

\section{Conclusions}
In this paper, we demonstrate the the necessary and sufficient conditions of the local convergence of the alternating projection (AP) algorithm to the unique solution up to a global phase factor, and apply it to the ptychography imaging problem. To be more precise, we have conditions so that the user can check if the AP algorithm gives the inverse transform of the phase retrieval problem when the frame is generic. We also survey the intimate relationship between the AP algorithm and the notion of phase synchronization and propose two algorithm, GCL-PS and t-PS, to quickly construct an accurate initial guess for the AP algorithm for large scale diffraction data problems. In addition, by combining the RAAR algorithm or conjugate gradient method with the frame-wise synchronization, the convergence is over $40-80\times$ faster than the AP algorithm and is about $10\times$ faster than the RAAR algorithm.

There are several problems left unanswered in this paper. We mention at least the following four directions. 
First, in addition to the global convergence issue of the AP algorithm, how to design the best lens and illumination scheme so that we can obtain an accurate reconstruction for the real samples;  given a detector, with a limited rate,  dynamic range and response function, what is the best scheme to encode more information per detector channel.  
Second, the noise influence on the convergence behavior needs further investigation. Experimental uncertainties include not only photon-counting statistics but also perturbations of the lens \cite{Thibault_Dierolf_Menzel_Bunk_David_Pfeiffer:2008,Thibault_Dierolf_Bunk_Menzel_Pfeiffer:2009,Fannjiang_Liao:2013}, illumination scheme (positions), incoherent measurements,  detector response and discretization,  time dependent fluctuations, etc.
Third, spectral methods such as the proposed algorithms in this paper (GCL-PS and t-PS) have the potential to be scaled up on high-performance computing architectures to handle the big imaging data in the coming new light source era \cite{Chapman:2009,Borland:2013}.
Last, although RAAR, synchro-RAAR and other iterative schemes perform well in practice, their convergence behavior needs to be further studied. Can we design better iterative methods based on our findings that exploit phase synchronization schemes more efficiently?

\section{Acknowledgements}

This work is partially supported by the Center for Applied Mathematics for Energy Research Applications (CAMERA), which is a partnership between Basic Energy Sciences (BES) and Advanced Scientific Computing Research (ASRC) at the U.S. Department of Energy (SM) and by AFOSR grant FA9550-09-1-0643 (HT).
The authors would like to thank Professor Arthur Szlam, Dr. Jeffrey J. Donatelli and Dr. Wenjing Liao for their inputs to improve the paper. H.-T. Wu thanks Professor Ingrid Daubechies and Professor Albert Fannajing for the discussion. We acknowledge NVIDIA for providing us with a Tesla K40 GPU for our tests.

\bibliographystyle{plain}
\bibliography{ptychographic}

\end{document}